\documentclass[a4paper,12pt]{article}
\usepackage{amsmath}
\usepackage{amsthm}
\usepackage{amssymb}
\usepackage{amscd}
\usepackage{graphicx}
\usepackage{epsfig}
\usepackage[matrix,arrow,curve]{xy}
\usepackage{color}
\usepackage{mathrsfs}
\usepackage[scr=rsfs,cal=boondox]{mathalpha}

\usepackage{bbm}
\usepackage{stmaryrd}
\usepackage{hyperref}
\usepackage{multirow}

\usepackage{latexsym}
\usepackage{amsfonts}
\input xy
\usepackage{tikz}
\usetikzlibrary{matrix}

\newtheorem{theorem}{Theorem}[section]
\newtheorem{proposition}[theorem]{Proposition}
\newtheorem{lemma}[theorem]{Lemma}
\newtheorem{corollary}[theorem]{Corollary}
\newtheorem{conjecture}[theorem]{Conjecture}

\theoremstyle{definition}
\newtheorem{example}[theorem]{Example}
\newtheorem{remark}[theorem]{Remark}
\newtheorem{definition}[theorem]{Definition}

\mathchardef\za="710B  
\mathchardef\zb="710C  
\mathchardef\zg="710D  
\mathchardef\zd="710E  
\mathchardef\zve="710F 
\mathchardef\zz="7110  
\mathchardef\zh="7111  
\mathchardef\zvy="7112 
\mathchardef\zi="7113  
\mathchardef\zk="7114  
\mathchardef\zl="7115  
\mathchardef\zm="7116  
\mathchardef\zn="7117  
\mathchardef\zx="7118  
\mathchardef\zp="7119  
\mathchardef\zr="711A  
\mathchardef\zs="711B  
\mathchardef\zt="711C  
\mathchardef\zu="711D  
\mathchardef\zvf="711E 
\mathchardef\zq="711F  
\mathchardef\zc="7120  
\mathchardef\zw="7121  
\mathchardef\ze="7122  
\mathchardef\zy="7123  
\mathchardef\zf="7124  
\mathchardef\zvr="7125 
\mathchardef\zvs="7126 
\mathchardef\zf="7127  
\mathchardef\zG="7000  
\mathchardef\zD="7001  
\mathchardef\zY="7002  
\mathchardef\zL="7003  
\mathchardef\zX="7004  
\mathchardef\zP="7005  
\mathchardef\zS="7006  
\mathchardef\zU="7007  
\mathchardef\zF="7008  
\mathchardef\zW="700A  

\newcommand{\be}{\begin{equation}}
\newcommand{\ee}{\end{equation}}

\newcommand{\bea}{\begin{eqnarray}}
\newcommand{\eea}{\end{eqnarray}}
\newcommand{\beas}{\begin{eqnarray*}}
\newcommand{\eeas}{\end{eqnarray*}}
\def\*{{\textstyle *}}
\newcommand{\T}{{\mathbb T}}

\newcommand{\SU}{\mathsf{SU}}
\newcommand{\su}{\mathfrak{su}}

\newcommand{\we}{\wedge}
\newcommand{\nn}{\nonumber}
\newcommand{\ot}{\otimes}



\newcommand{\g}{\mathfrak{g}}
\newcommand{\h}{\mathfrak{h}}




\newcommand{\La}{\big\langle}
\newcommand{\Ra}{\big\rangle}

\newcommand{\N}{\mathbb{N}}
\newcommand{\Z}{\mathbb{Z}}
\newcommand{\R}{\mathbb{R}}

\newcommand{\C}{\mathbb{C}}



\newcommand{\pa}{\partial}
\newcommand{\ti}{\times}

\newcommand{\cG}{{\mathcal G}}

\newcommand{\ad}{{\mathrm{ad}}}

\newcommand{\Ll}{{\pounds}}

\def\ran{\rangle}

\def\op{\oplus}

\def\cR{{\mathcal R}}

\def\cH{{\mathcal H}}
\def\cO{{\mathcal O}}

\def\ul{\underline}

\def\Ad{\operatorname{Ad}}

\def\si{\mathsf{i}}

\def\la{\langle}



\def\sD{{\mathsf D}}

\def\sH{{\mathsf H}}

\def\sT{{\mathsf T}}

\def\sv{{\mathsf v}}

\def\xd{\mathrm{d}}
\def\xi{\tx{i}}

\def\cF{{\mathcal F}}

\newdir{|>}{%
!/4.5pt/@{|}*:(1,-.2)@^{>}*:(1,+.2)@_{>}}

\def\dim{\operatorname{dim}}

\def\Ann{\mathsf{Ann}}

\def\gl{\operatorname{gl}}

\def\L{\mathbb{L}}
\def\hL{{\hat{\,\mathbb{L}}}}
\def\U{{\mathsf{U}}}

\def\u{{\mathfrak{u}}}
\def\P{\mathbf{P}}


\newdir{ (}{{}*!/-5pt/@^{(}}

\newcommand{\tr}{\mbox{$\mathrm{tr}$}}

\newcommand{\bk}[2]{\ensuremath{\langle #1 , #2\rangle}}

\newcommand{\Bk}[2]{\ensuremath{\La #1 , #2\Ra}}

\newcommand{\bkk}[2]{\ensuremath{\langle #1 \,|\, #2\rangle}}

\newcommand{\kb}[2]{\ensuremath{| #1\rangle\!\langle #2 |}}

\newcommand{\im}{{\mathsf{Im}}}
\newcommand{\re}{{\mathsf{Re}}}


\newcommand{\vol}{\textnormal{vol}}




\def\N{\mathbb{N}}

\def\t{\mathsf{t}}

\def\Lie{\mathrm{Lie}}
\def\n{\nabla}
\def\g{\mathfrak{g}}
\newcommand{\m}{{\medskip}}
\newcommand{\mn}{{\medskip\noindent}}

\newcommand{\no}{{\noindent}}

\newcommand{\us}{{\u^*}}
\newcommand{\half}{{\frac{1}{2}}}

\def\sv{\mathsf{v}}



\tolerance=500 \textwidth16.8cm \textheight25.8cm \hoffset-1.2cm \voffset-3cm

\begin{document}
\title{\textbf{\Large Contactifications:
a Lagrangian description\\ of compact Hamiltonian systems}\footnote{The research of K.~Grabowska and J.~Grabowski was partially funded by the National Science Centre (Poland) within the project WEAVE-UNISONO, No. 2023/05/Y/ST1/00043.}}
\author{Katarzyna Grabowska\footnote{email:konieczn@fuw.edu.pl }\\
\textit{Faculty of Physics,
                University of Warsaw}
\\ \\
Janusz Grabowski\footnote{email: jagrab@impan.pl} \\
\textit{Institute of Mathematics, Polish Academy of Sciences}
\\ \\
Marek Ku\'s\footnote{email: marek.kus@cft.edu.pl}\\
\textit{Center for Theoretical Physics, Polish Academy of Sciences}
 \\ \\
Giuseppe Marmo\footnote{email: marmo@na.infn.it}\\
\textit{Dipartimento di Fisica ``Ettore Pancini'', Universit\`{a} ``Federico II'' di Napoli} \\
\textit{and Istituto Nazionale di Fisica Nucleare, Sezione di Napoli} }
\date{}
\maketitle
\begin{abstract}
If $\zh$ is a contact form on a manifold $M$ such that the orbits of the Reeb vector field $\cR$ form a simple foliation  $\cF$ on $M$, then the presymplectic 2-form $\xd\zh$ on $M$ induces a symplectic structure $\zw$ on the quotient manifold $N=M/\cF$. We call $(M,\zh)$ a \emph{contactification} of the symplectic manifold $(N,\zw)$. First, we present an explicit geometric construction of contactifications of some coadjoint orbits of connected Lie groups. Our construction is a far going generalization of the well-known contactification of the complex projective space $\C\P^{n-1}$, being the unit sphere $S^{2n-1}$ in $\C^{n}$, and equipped with the restriction of the Liouville 1-form on $\C^n$. Second, we describe a constructive procedure of obtaining contactification in the process of the Marsden-Weinstein-Meyer symplectic reduction, and indicate geometric obstructions for the existence of compact contactifications. Third,
we show that contactifications provide a nice geometrical tool for a Lagrangian description of Hamiltonian systems on compact symplectic manifolds $(N,\zw)$, on which symplectic forms never admit a `vector potential'.

\bigskip\noindent
{\bf Keywords:}
\emph{contact form, symplectic form, Lie group, coadjoint orbit, symplectic reduction, Hamilton equations, Lagrangian formalism, prequantization.}\par

\medskip\noindent
{\bf MSC 2020:} 53D05; 53D10; 53D35; 70Hxx; 70H25; 70S05.	

\vskip1cm
\
\end{abstract}
\section{Introduction}
Nowadays, it is widely believed that the description of fundamental interactions has to be formulated in terms of gauge theories. The most paradigmatic example is provided by electrodynamics. In general, the joint evolution equations of fields and sources gives rise to equations which are essentially untractable when it comes to find explicit solutions. Many simplifying assumptions are usually made to uncouple the evolution of fields and the evolution of particles.

\medskip To this aim, one introduces the notion of test particles, i.e., particles which move under the influence of external fields while their motion does not contribute to the field itself. The evolution of the inner degrees of freedom  of test particles (say, spin, isospin, colour) is usually  described by a Hamiltonian vector field on a compact symplectic manifold. This means: \emph{via} a nondegenerate closed two form which is closed but not exact,
however, the interaction with external fields, usually minimal coupling, requires the existence of a `potential' for the symplectic structure. 

\medskip Actually, there are many finite-dimensional interesting physical systems, which Hamiltonian evolution is described by means of  symplectic structures which are closed but not exact (charged particles in the field of magnetic monopoles, spinning particles, particles whose inner structure is described by orbits of the coadjoint action of compact groups. Other instances of this problem arise in the description of the Berry phase for mixed states, when the eigenvalues of the density operator are pairwise  rationally related among them.

\medskip For such systems it is not possible to provide a global Lagrangian description, as the interaction with external fields requires the existence of a `vector potential' for the symplectic structure. In several specific cases, this has been achieved by replacing the compact symplectic manifold with a compact covering, on which the pull-back of the closed two-form turns out to be exact. For some cases, the problem was addressed by Balachandran and collaborators \cite{Balachandran:1983,Balachandran:1991, Balachandran:2017} and afterwards, in the mathematics community, e.g., by Sternberg and Weinstein \cite{Sternberg:1977,Weinstein:1978}. This is closely related to problems of geometric quantization (see, e.g., \cite{Bates:1997}), in which the covering appears to be an $S^1$- (equivalently, a Hermitian complex line bundle) or $\R$-principal bundle \cite{Boothby:1958,Grabowska:2023}, on which the lifted 2-form is invariant. Note, however, that there is a mistake in the proof in \cite{Boothby:1958} that requires its substantial modification (cf. \cite{Geiges:2008,Grabowska:2023}).

\medskip In this paper, we would like to reconsider the problem of `unfolding', i.e., how to construct a compact `covering' of a compact symplectic manifold on which the pull-back of the two form will be exact, and to do this in the most economical way, i.e., adding only one additional dimension. From the geometrical point of view, this amounts to a contactification of a symplectic compact manifold which is a coadjoint orbit of a compact Lie group.
Since the unfolded 2-form is a presymplectic form being the differential of a contact 1-form, we call this procedure a \emph{contactification} of a symplectic manifold, which goes in the direction opposite to \emph{symplectization} (cf. \cite{Arnold:1989}).
Moreover, we describe a constructive way of obtaining contactifications `on the fly', when doing reductions of Hamiltonian systems. Note that contactifications are not uniquely determined but the compact ones correspond to prequantization (or $S^1$-principal) bundles and are possible only for compact symplectic manifolds satisfying the Dirac quantization condition.

\medskip The organization of the paper is the following. For the purposes of reduction procedures, we start with an introduction to reductions of differential forms and basics on contact and symplectic manifolds. A section devoted to the geometry of Quantum Mechanics in finite dimensions is aimed to provide us an understanding of the set of quantum states as a convex body in the space of Hermitian operators and the manifold of pure states (complex projective space) as a coadjoint orbit of the unitary group. This leads to the standard examples of a contactification for complex projective spaces.

Further, we show, how to obtain coadjoint orbits in the process of Marsden-Weinstein-Meyer symplectic reduction, and describe where contactifications are hidden in the reduction.
The corresponding strategy is much more clear in the case of compact (e.g., unitary) Lie groups, whom we devote a separate section. Here, they appear obstructions for the existence of a compact contactification, completely equivalent to the Dirac quantization conditions in the theory of geometric quantization. In our approach, these obstructions have a simple topological sense.

The corresponding formalism is made explicit for unitary groups. Finally, we realize our main aim by constructing Lagrangians and the corresponding action functionals for Hamiltonian systems on compact symplectic manifolds by means of a contactification. It is illustrated with an example related to a magnetic monopole. We will work in the smooth category and all manifolds we consider are Hausdorff and paracompact.

\section{Reductions of differential forms}
Let us start with fixing some terminology and recalling some basic facts from differential geometry.

Suppose $\sD\subset\sT M$ is a distribution of rank $r$ (distributions are always smooth and regular in this paper) on a manifold $M$ of dimension $m$. We say that a (local) vector field $X$ on $M$ \emph{belongs to $\sD$} (and write $X\in\sD$) if $X_q\in\sD_q$ for all $q\in M$. The distribution $\sD$ is \emph{involutive} if the Lie bracket $[X,X']$ of any vector fields $X,X'\in\sD$ also belongs to $\sD$. According to the celebrated Frobenius Theorem, in this case \emph{integral submanifolds} of $\sD$, i.e., maximal connected immersed  submanifolds $N\subset M$ such that $\sT N\subset\sD$, have dimension $r$ and form a smooth \emph{foliation} $\cF(\sD)$ of $M$. For any point $q\in M$, there is its open neighbourhood $U$ equipped with local coordinates $(x^1,\dots,x^r,y^1,\dots,y^{m-r})$ such that the leaves of $\cF(\sD)\cap U$ are defined by $y=const\in\R^{m-r}$.

\mn The distribution $\sD$ we will call \emph{simple} if $\sD$ is involutive and the corresponding foliation $\cF(\sD)$ of $M$ is simple, i.e., there is a smooth manifold structure on the topological space $M/\sD=M/\cF(\sD)$ of the leaves of the foliation $\cF$ such that the canonical projection $p_\sD:M\to M/\sD$ is a surjective submersion (a smooth fibration). In this case, the coordinates $(y^j)$ mentioned above can be taken to be the pull-backs of some local coordinates $(u^j)$ around $p_\sD(q)\in M/\sD)$, $y^j=u^j\circ p_\sD$.
\begin{definition}\label{d1}
Let $\za$ be a $k$-form on a manifold $M$ and $v\in\sT M$. For an involutive distribution $\sD$ on $M$, the form $\za$ is \emph{$\sD$-invariant} if $\Ll_X \za=0$ for all $X\in\sD$.
Let us denote with $\si_v\za$ the (left) contraction of $\za$ with $v$ (the symbol $i$ we will need later to denote the imaginary root of $-1$).
The \emph{kernel} of $\za$, denoted $\ker(\za)$, is the subset of those vectors $v$ from $\sT M$ such that $\si_v\za=0$. The subset
$$\zq(\za)=\ker(\za)\cap\ker(\xd\za)\subset\sT M$$
we call, in turn, the \emph{characteristic set} of $\za$.
We call $\za$ \emph{regular} if $\zq(\za)$ is a (regular) distribution. In this case, we call $\zq(\za)$ the \emph{characteristic distribution} of $\za$. If the characteristic distribution consists of zero-vectors, we call $\za$ \emph{nondegenerate}.
\end{definition}
\no Note that $\ker(\za)$ is generally not a submanifold, nor a generalized (smooth) distribution (it is not locally generated by smooth vector fields), and if $\za$ is a closed, then $\zq(\za)=\ker(\za)$. Moreover,  simple but very useful observations in this context are the following.
\begin{lemma}\label{l2} Let $\za$ be a $k$ form on $M$ and $\sD$ be an involutive distribution on $M$. Then,
\begin{description}
\item{(a)} If a vector field $X$ belongs to $\zq(\za)$, then $\Ll_X\za=0$.
\item{(b)} The $k$-form $\za$ is $\sD$-invariant if and only if $\sD\subset\zq(\za)$.
\item{(c)} If $\za$ is regular, then $\zq(\za)$  is an involutive distribution and $\za$ is $\zq(\za)$-invariant.
\end{description}
\end{lemma}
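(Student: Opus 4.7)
The plan is to derive all three parts from two standard identities of Cartan calculus: the magic formula $\Ll_X\za = \si_X\xd\za + \xd\si_X\za$ and the commutator identity $\si_{[X,Y]} = \Ll_X\circ\si_Y - \si_Y\circ\Ll_X$, both valid for arbitrary vector fields $X,Y$ and any differential form $\za$.

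Part (a) is immediate: if $X\in\zq(\za)=\ker\za\cap\ker\xd\za$, then $\si_X\za=0$ and $\si_X\xd\za=0$ by definition, so the magic formula collapses to $\Ll_X\za=0$.

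Part (b). The implication $\sD\subset\zq(\za)\Rightarrow\za$ is $\sD$-invariant follows from (a) applied to each $X\in\sD$. For the converse, one unpacks $\sD$-invariance of $\za$ into its two constituent vanishings, namely horizontality $\si_X\za=0$ together with $\Ll_X\za=0$ for $X\in\sD$ (this is the natural interpretation in the foliated setting, ensuring that $\za$ descends to $M/\sD$); the magic formula then forces $\si_X\xd\za=-\xd\si_X\za=0$ as well, so $X\in\zq(\za)$.

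Part (c). Assume $\zq(\za)$ is regular and take $X,Y\in\zq(\za)$. By (a), $\Ll_X\za=0$, and by definition $\si_Y\za=0$, so the commutator identity yields
$$\si_{[X,Y]}\za = \Ll_X\si_Y\za - \si_Y\Ll_X\za = 0.$$
Applying the identity with $\xd\za$ in place of $\za$ and using $\Ll_X\xd\za=\xd\Ll_X\za=0$, one obtains $\si_{[X,Y]}\xd\za=0$ as well. Hence $[X,Y]\in\zq(\za)$, so $\zq(\za)$ is involutive. The $\zq(\za)$-invariance of $\za$ is then simply (a) read with $\sD=\zq(\za)$.

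I expect the subtlest step to be the converse direction of (b), where the full content of "$\sD$-invariance" must be pinned down so as to deliver the horizontality $\si_X\za=0$ in addition to $\Ll_X\za=0$; without this extra piece, the implication $\Ll_X\za=0 \Rightarrow X\in\zq(\za)$ fails for simple reasons, and the equivalence in (b) would not be tight.
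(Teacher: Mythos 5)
Parts (a) and (c) of your proposal are correct and essentially the paper's own argument. The problem is the converse direction of (b). The paper defines $\sD$-invariance of $\za$ as the single condition $\Ll_X\za=0$ for all $X\in\sD$; horizontality $\si_X\za=0$ is \emph{not} part of that definition. By ``unpacking'' $\sD$-invariance into the two conditions $\si_X\za=0$ and $\Ll_X\za=0$ you have silently strengthened the hypothesis, and your proof of $(\Rightarrow)$ then assumes exactly the nontrivial half of what must be shown. You remark that without this extra piece the implication ``fails for simple reasons'' --- but it does not fail (your counterexample intuition applies to a single vector field, e.g.\ $\za=\xd x$, $X=\pa_x$, not to a distribution), and seeing why it holds is the actual content of (b).

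The missing idea is the $C^\infty(M)$-module structure of $\sD$: if $X\in\sD$ then $fX\in\sD$ for every function $f$, so $\sD$-invariance gives $\Ll_{fX}\za=0$ for all $f$, not just $\Ll_X\za=0$. Expanding with the Cartan formula,
$$\Ll_{fX}\za=\si_{fX}\,\xd\za+\xd\left(f\,\si_X\za\right)=f\,\Ll_X\za+\xd f\we\si_X\za,$$
so $\xd f\we\si_X\za=0$ for every $f$; since $\xd f$ is pointwise an arbitrary covector, this forces $\si_X\za=0$. The magic formula then yields $\si_X\xd\za=\Ll_X\za-\xd\si_X\za=0$, hence $\sD\subset\zq(\za)$. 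This is the paper's argument; with it inserted, your ``unpacking'' becomes a derived consequence rather than an assumption, and the remainder of your proof goes through unchanged.
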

\begin{proof}
(a) \ In view of the `magic' Cartan formula,
\be\label{mC}\Ll_X\za=\xd\si_X\za+\si_X\xd\za=0.\ee

\no (b) \ The $(\Leftarrow)$ part follows immediately from (a).

\no To prove $(\Rightarrow)$, let us notice that for any function $f$ on $M$ we have
$$\Ll_{fX}\za=\si_{fX}\,\xd\za+\xd\left(f\,\si_X\za\right)=f\Ll_X\za+\xd f\we\si_X\za.$$
If $X\in\sD$, then $fX\in\sD$, so $\xd f\we\si_X\za=0$ for all functions $f$ and thus $\si_X\za=0$.
But from (\ref{mC}) we get also $\si_X\xd\za=0$.

\mn (c) \ In view of (b), it is enough to show that $\zq(\za)$ is involutive.
Let $X,Y$ be vector fields belonging to $\zq(\za)=\ker(\za)\cap\ker(\xd\za)$.
Then, according to the well-know identity
$$\si_{[X,Y]}=\Ll_X\circ\si_Y-\si_Y\circ\Ll_X,$$
we get
$$\si_{[X,Y]}\za=\Ll_X\si_Y\za-\si_Y\left(\xd\,\si_X\za+\si_X\xd\za\right)=0.$$
Replacing now $\za$ with $\xd\za$ in the left-had-side of the above identity, we get also
$\si_{[X,Y]}\xd\za=0$.

\end{proof}
\begin{corollary}\label{formred}
If $\za$ is a differential $k$-form on $M$ which is invariant with respect to a simple distribution $\sD\subset\sT M$, then there is a unique $k$-form $\za/\sD$ on the manifold $M/\sD$ such that
$p_\sD^*(\za/\sD)=\za$. Moreover, in the case when $\za$ is regular and $\zq(\za)$ is a simple
distribution, the $k$-form $\za_{red}=\za/\zq(\za)$ on $M_{red}=M/\zq(\za)$ is nondegenerate.
\end{corollary}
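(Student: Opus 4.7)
The plan is to establish existence and uniqueness of $\za/\sD$ first, and then obtain nondegeneracy of $\za_{red}$ as a direct consequence of that construction applied to $\sD=\zq(\za)$.

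For the first assertion, I would begin by invoking Lemma \ref{l2}(b) to upgrade the $\sD$-invariance of $\za$ to the inclusion $\sD\subset\zq(\za)$, which gives both $\si_X\za=0$ and $\si_X\xd\za=0$ for every $X\in\sD$. Since $\sD$ is simple, $p_\sD\colon M\to M/\sD$ is a surjective submersion whose vertical bundle is precisely $\sD$. The form $\za/\sD$ is then defined pointwise: given $\bar v_1,\dots,\bar v_k\in\sT_{\bar q}(M/\sD)$, pick any $q\in p_\sD^{-1}(\bar q)$ and any lifts $v_i\in\sT_qM$, and set $(\za/\sD)_{\bar q}(\bar v_1,\dots,\bar v_k)=\za_q(v_1,\dots,v_k)$. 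Two lifts of a given $\bar v_i$ differ by an element of $\sD$, and $\si_X\za=0$ for $X\in\sD$ forces the right-hand side to be independent of the choices (first of $q$ along a fibre, then of each $v_i$ via multilinearity and skew-symmetry). Smoothness follows by noting that $\za/\sD|_U=\zs^*\za$ for any local smooth section $\zs\colon U\to M$ of $p_\sD$, and uniqueness is automatic because $p_\sD^*$ is injective on forms.

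For the second assertion, Lemma \ref{l2}(c) guarantees that $\zq(\za)$ is involutive and $\za$ is $\zq(\za)$-invariant whenever $\za$ is regular, so the simplicity hypothesis on $\zq(\za)$ lets me invoke the first part with $\sD=\zq(\za)$ to obtain $\za_{red}=\za/\zq(\za)$ on $M_{red}=M/\zq(\za)$, with $p_{\zq(\za)}^*\za_{red}=\za$ and hence $p_{\zq(\za)}^*\xd\za_{red}=\xd\za$. To verify $\zq(\za_{red})=0$, I pick $\bar v\in\zq(\za_{red})_{[q]}$ and a lift $v\in\sT_qM$. For arbitrary $u_1,\dots,u_{k-1}\in\sT_qM$, I use a local section of $p_{\zq(\za)}$ to produce lifts $\tilde u_i$ of the projections $(p_{\zq(\za)})_*u_i$; then $u_i-\tilde u_i\in\zq(\za)\subset\ker\za$, so by multilinearity and skew-symmetry $\za_q(v,u_1,\dots,u_{k-1})=\za_q(v,\tilde u_1,\dots,\tilde u_{k-1})$, and the latter equals $(\za_{red})_{[q]}(\bar v,(p_{\zq(\za)})_*u_1,\dots,(p_{\zq(\za)})_*u_{k-1})=0$. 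Hence $v\in\ker\za$, and the identical argument applied to $\xd\za$ and $\xd\za_{red}$ yields $v\in\ker\xd\za$. Thus $v\in\zq(\za)$ and $\bar v=(p_{\zq(\za)})_*v=0$.

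The only delicate step is this multilinear decomposition: it crucially exploits the full inclusion $\zq(\za)\subset\ker\za$, that is, the $\ker\za$ factor in the characteristic set and not merely $\ker\xd\za$, which is what guarantees that a vertical insertion in any slot annihilates $\za$. Everything else reduces to the standard principle that basic forms descend uniquely and smoothly along simple foliations.
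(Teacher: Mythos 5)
Your argument is correct and rests on the same foundation as the paper's, namely Lemma \ref{l2}(b), but the execution differs: the paper works in adapted coordinates $(x^i,y^j)$ with $\pa_{x^i}\in\sD$, writes $\za=g_I\,\xd y^I$ after killing the $\xd x^i$ terms, and deduces $\pa g_I/\pa x^i=0$ from invariance, whereas you give the coordinate-free version (pointwise definition via lifts, well-definedness, smoothness via local sections). One small imprecision: independence of the choice of $q$ \emph{along a fibre} is not a consequence of $\si_X\za=0$ alone, as you state, but of the invariance $\Ll_X\za=0$ under the flows of $X\in\sD$ (which maps lifts to lifts since it covers the identity on $M/\sD$); since $\sD$-invariance is the hypothesis of the corollary, this costs you nothing, but the attribution should be corrected. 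Where your write-up genuinely adds value is the second assertion: the paper dismisses nondegeneracy of $\za_{red}$ with ``of course, the reduction kills degeneration,'' while you actually verify $\zq(\za_{red})=0$ by lifting a putative kernel vector, decomposing arbitrary test vectors into a horizontal lift plus a vertical part in $\zq(\za)\subset\ker(\za)\cap\ker(\xd\za)$, and concluding that the lift lies in $\zq(\za)$ and hence projects to zero. Your closing remark is also on point: it is precisely the $\ker(\za)$ factor of the characteristic set (and not merely $\ker(\xd\za)$) that makes vertical insertions in every slot vanish, which is why the corollary reduces by $\zq(\za)$ rather than by $\ker(\xd\za)$.
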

\begin{proof}
Let us take $y_0\in M/\sD$ and $q\in M$ such that $p_\sD(q)=y_0$. Let $(y^j)$ be local coordinates around $y_0$ in $M/\sD$. Denoting their pull-backs $y^j\circ p_\sD$, with some abuse of notation, also $y^j$, we can pick-up $x^i$ so that $(x^i,y^j)$ are local coordinates around $q$ in $M$. In particular, $\pa_{x^i}\in\sD$. Hence, according to Lemma \ref{l2}, $\si_{\pa_{x^i}}\za=0$ for all $i$, so there are no $\xd x^i$ in $\za$. Consequently, we can write $\za$ in the form
$\za=g_I\xd y^I$, where $I=(i_1,\dots,i_{m-r})$, $i_s=0,1$, are multi-indices, and
$$\xd y^I=(\xd y^1)^{i_1}\we\cdots\we(\xd y^{m-r})^{i_{m-r}},$$
with $(\xd y^j)^0=1$. We get further
$$ 0=\Ll_{\pa_{x^i}}\za=\frac{\pa g_I}{\pa x^i}\xd y^I.
$$
Hence, the coefficients $g_I$ do not depend on variables $(x^i)$, $g_I=g_I(y)$, so $\za$ can be written with the use of coordinates $(y^j)$ only and therefore it can be treated as the pull-back of a unique $k$-form $\za/\sD$, which in coordinates $(y^j)$ looks formally exactly as $\za$, $\za/\sD=g_I(y)\xd y^I$. Of course, if $\sD=\zq(\za)$, then the reduction kills degeneration and $\za/\zq(\za)$ is nondegenerate.

\end{proof}
\begin{definition} Let $M$ be a manifold of dimension $m$.

\no\begin{description}\label{forms}
\item{(a)} Regular $k$-forms on $M$ with the characteristic foliation being simple we call \emph{simple}, and the foliation $\cF(\za)=\cF(\zq(\za))$ of $M$ we cal the \emph{characteristic foliation} of $\za$. If $\za$ is simple, then the pair $(M_{red},\za_{red})$ we call the \emph{regular reduction} of $(M,\za)$.
\item{(b)} A submanifold $\si_N:N\to M$ of a manifold $M$ equipped with a $k$ form $\za$ we call \emph{regular} if the restriction $\za_N=\si^*(\za)$ of $\za$ to $N$ is a regular form. If $\za_N$ is simple, we speak about a \emph{simple} submanifold $N$ of $(M,\za)$.
\item{(c)} A 2-form $\zw$ on $M$ we call \emph{presymplectic of rank $2r$} it $\zw$ is closed and $\zq(\zw)=\ker(\zw)$ is of rank $m-2r$. If $\zw$ is additionally nondegenerate, i.e., $2r=m$, we call $\zw$ a \emph{symplectic} form. A manifold equipped with a (pre)symplectic form we call \emph{(pre)symplectic}.
\item{(d)} A submanifold $\si_N:N\hookrightarrow M$ of a symplectic manifold $(M,\zw)$ we call \emph{presymplectic} if the restriction $\zw_N=\zw\,\big|_N=\si_N^*(\zw)$ of $\zw$ to $N$ is regular, thus presymplectic. A presymplectic manifolds $(M,\zw)$ equipped with a \emph{potential}, i.e., a 1-form $\zvy$ such that $\xd\zvy=\zw$, we call \emph{exact}.
\end{description}
\end{definition}
\no Note that the rank of a 2-form is even at every point. Above, we used the definition of a \emph{presymplectic form} as it was introduced by Souriau.
Some authors consider presymplectic forms simply as closed 2-forms, which is too weak for our purposes. Even in this case one assumes in applications the regularity of $\zw$.

\mn With simple regular submanifolds $N$ of $(M,\za)$ we can associate \emph{differential form reduction} which follows immediately from Corollary \ref{formred}.
\begin{theorem}[Regular reduction of differential forms]\label{dfr}
Let $\za$ be a $k$-form on a manifold $M$, and let $\si_N:N\hookrightarrow M$ be simple submanifold of $(M,\za)$. Then the $k$-form $\za_N=\si_N^*(\za)$ on $N$ is simple and there is a nondegenerate $k$-form $\za^N_{red}=(\za_N)_{red}$ on $N_{red}^\za=N/\zq(\za_N)$ such that $p_\za(N)^*(\za^N_{red})=\za_N$, where $p_\za(N):N\to N_{red}^\za$ is the canonical surjective submersion.
\end{theorem}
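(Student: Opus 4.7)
The plan is to observe that this theorem is essentially a repackaging of Corollary \ref{formred} applied to the form $\za_N$ on the manifold $N$, so the work has already been done; what remains is to verify that the hypotheses of the corollary are met and to assemble the conclusion in the stated form.

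First I would unwind the definitions. By Definition \ref{forms}(b), the assumption that $\si_N:N\hookrightarrow M$ is a \emph{simple} submanifold of $(M,\za)$ means, by construction, that the pulled-back form $\za_N=\si_N^*(\za)$ is a simple $k$-form on $N$ in the sense of Definition \ref{forms}(a). In particular, $\za_N$ is regular, so its characteristic set $\zq(\za_N)=\ker(\za_N)\cap\ker(\xd\za_N)$ is a genuine distribution on $N$, and the corresponding characteristic foliation $\cF(\za_N)=\cF(\zq(\za_N))$ is simple, which by the terminology introduced before Definition \ref{d1} precisely means that the leaf space $N/\zq(\za_N)$ carries a smooth manifold structure making the projection $p_\za(N):N\to N/\zq(\za_N)$ a surjective submersion.

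Next I would invoke Lemma \ref{l2}(c): since $\za_N$ is regular, the distribution $\zq(\za_N)$ is involutive and $\za_N$ is $\zq(\za_N)$-invariant. This is exactly the hypothesis needed to apply Corollary \ref{formred} to the pair $(N,\za_N)$ with $\sD=\zq(\za_N)$. The corollary then delivers a unique $k$-form $\za^N_{red}\de\za_N/\zq(\za_N)$ on $N_{red}^\za=N/\zq(\za_N)$ satisfying $p_\za(N)^*(\za^N_{red})=\za_N$. Moreover, the last sentence of Corollary \ref{formred} states that when one reduces by the full characteristic distribution, the resulting form is nondegenerate; this gives nondegeneracy of $\za^N_{red}$.

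I do not expect any genuine obstacle here, since everything is prepared by Lemma \ref{l2} and Corollary \ref{formred}; the only mildly subtle point is the bookkeeping check that the simplicity assumption on the submanifold $N$ in Definition \ref{forms}(b) is exactly the input required by Corollary \ref{formred}, which I would make explicit as above.
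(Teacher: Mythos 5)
Your proposal is correct and follows exactly the route the paper intends: the paper itself introduces Theorem \ref{dfr} with the remark that it ``follows immediately from Corollary \ref{formred}'', and your write-up simply makes explicit the definitional unwinding (Definition \ref{forms}(b) giving simplicity of $\za_N$, Lemma \ref{l2}(c) giving $\zq(\za_N)$-invariance) needed to apply that corollary. Nothing is missing.
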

\no The procedure of passing from $(M,\za)$ to $(N_{red}^\za,\za^N_{red})$ we call \emph{$\za$-reduction} of $N$. If $\za$ is symplectic, the $\za$-reduction is traditionally called a \emph{symplectic reduction} (cf. \cite{Libermann:1987}). In this case, $\za_N$ is a presymplectic form on $N$.
For presymplectic forms we have the following.
\begin{theorem}[Presymplectic Darboux Theorem]
If $\zw$ is a presymplectic form of rank $2r$ on a manifold $M$ of dimension $m$, then around every point of $M$ there are local coordinates
$$(p_1,\dots,p_r,q^1,\dots,q^r,z^1,\dots,z^{m-2r})$$
in which $\zw$ reads
$$\zw=\xd p_i\we\xd q^i.$$
\end{theorem}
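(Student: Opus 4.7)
The strategy is to descend to the symplectic case via the characteristic foliation and then invoke the classical (symplectic) Darboux theorem on the quotient. Since $\zw$ is closed of constant rank $2r$, we have $\zq(\zw)=\ker(\zw)$, and this is a regular distribution of rank $m-2r$. By Lemma \ref{l2}(c) it is involutive, and by the Frobenius theorem it defines a local foliation. Shrinking the neighbourhood of a chosen point if necessary, we may assume $\ker(\zw)$ is simple on a small open $U$, so we have a submersion $p:U\to N$ onto a $2r$-dimensional manifold $N=U/\ker(\zw)$.

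Next I would invoke Corollary \ref{formred} to produce a nondegenerate $2$-form $\zw_{red}$ on $N$ with $p^*\zw_{red}=\zw|_U$. Because $p$ is a surjective submersion, $p^*$ is injective on differential forms, and $p^*(\xd\zw_{red})=\xd(p^*\zw_{red})=\xd\zw=0$ forces $\xd\zw_{red}=0$. Hence $\zw_{red}$ is symplectic on $N$.

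Now I would apply the classical symplectic Darboux theorem on $N$: around the image point $p(q_0)$ there exist local coordinates $(p_1,\dots,p_r,q^1,\dots,q^r)$ in which $\zw_{red}=\xd p_i\we\xd q^i$. Pulling these functions back through $p$ (and denoting them by the same symbols), they become smooth functions on $U$ whose differentials are linearly independent and which are constant along the leaves of $\ker(\zw)$. To complete them to a local chart on $U$, I would pick any $m-2r$ functions $(z^1,\dots,z^{m-2r})$ that restrict to coordinates on the leaf through $q_0$ (for instance, by choosing a local transversal for $p$ and propagating coordinates along the leaves using a local frame of vector fields in $\ker(\zw)$). The Jacobian $(\xd p_i,\xd q^i,\xd z^j)$ is then of maximal rank at $q_0$, so $(p_i,q^i,z^j)$ is a local coordinate system around $q_0$. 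In these coordinates $\zw=p^*\zw_{red}=\xd p_i\we\xd q^i$, as required.

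The only nontrivial ingredient is the symplectic Darboux theorem itself, which I am assuming as a classical fact; the rest is a routine descent argument, and the sole point demanding care is checking that the functions transverse to the leaves can be chosen so that $(p_i,q^i,z^j)$ are honest local coordinates, which amounts to the standard fact that a submersion can be locally trivialised.
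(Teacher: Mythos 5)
Your argument is correct. The paper states the Presymplectic Darboux Theorem without proof, treating it as classical, so there is no in-text argument to compare against; your reduction to the symplectic case is the standard route and it meshes exactly with the machinery the paper has already set up: $\zq(\zw)=\ker(\zw)$ is involutive by Lemma \ref{l2}, locally simple by the Frobenius chart, and Corollary \ref{formred} hands you the nondegenerate (and, as you check via injectivity of $p^*$, closed) reduced form to which classical Darboux applies. The one point genuinely requiring care --- that the pulled-back Darboux functions together with leafwise coordinates $z^j$ form an honest chart --- you identify and resolve correctly: the $\xd p_i,\xd q^i$ span the annihilator of $\ker(\zw)$ at the base point while the $\xd z^j$ restrict to a basis of its dual, so the combined differentials are independent and the inverse function theorem applies. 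A marginally cleaner packaging is to take the $z^j$ to be the leafwise coordinates $x^i$ of the Frobenius chart itself, but this is cosmetic; the proof stands as written.
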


\section{Contactifications of symplectic manifolds}  Consider now a distribution $C\subset \sT M$ being a \emph{field of hyperplanes} on $M$, i.e., a distribution with rank $(m-1)$. Such a distribution is, at least locally, the kernel of a nonvanishing 1-form $\zh$ on $M$, i.e., $C=\ker(\zh)$. Of course, the 1-form $\zh$ is determined only up to conformal equivalence. Denote the line bundle $\sT M/C\to M$ with $L^C$ and let $\zt^C:\sT M\to L^C$ be the canonical projection. The map $\zn^C:C\ti_MC\to L^C$ which for vector fields $X,Y$ on $M$, taking values in $C$, reads $\zn^C(X,Y)=\zt^C([X,Y])$, is a well-defined skew-symmetric 2-form on $C$ with values in the line bundle $L^C$. Indeed, if $f:M\to\R$ is a function on $M$, then
\beas&&\zn^C\left(X,fY\right)=\zt^C([X,fY])=\zt^C\left(f[X,Y]+X(f)Y\right)\\
&&=f\zn^C(X,Y)+X(f)\zt^C(Y)=f\zn^C(X,Y)\,.\eeas
\begin{definition} A hyperplane field $C\subset\sT M$ we call a \emph{contact structure} if the 2-form $\zn^C$ on $C$ is nondegenerate. Manifolds equipped with a contact structure we will call \emph{contact manifolds}. Any nonvanishing (local) 1-form which determines a contact structure $C$ we call a \emph{contact form}.
If a globally defined contact form $\zh$ is chosen, then the corresponding contact structure $C=\ker(\zh)$ we call \emph{trivial} or \emph{co-oriented}. If such a contact form exists, we call the contact structure \emph{co-orientable} (trivializable).
\end{definition}
Of course, all 1-forms $f\zh$, $f\ne 0$, i.e., forms conformally equivalent to a contact form $\zh$, are also contact forms. In this paper we will deal only with trivial contact structures $(M,\zh)$, i.e., manifolds $M$ equipped with a globally defined contact form $\zh$.
\begin{remark} As integrability of a distribution is equivalent to its involutivity, the property that the bracket $[X,Y]$ is nondegenerate on $C$ is sometimes expressed as \emph{maximal non-integrability}.
\end{remark}
\begin{theorem}[Contact Darboux Theorem] Let $\zh$ be a  1-form on a manifold $M$ of dimension $(2n+1)$. Then $\zh$ is a contact form if and only if $\zW=\zh\we(\xd\zh)^n$ is a nonvanishing volume form. In such a case, around every point of $M$ there are local coordinates $(z,p_i,q^i)$, $i=1,\dots,n$, in which the contact form $\zh$ reads
\be\label{Dc} \zh=\xd z-p_i\,\xd q^i.\ee
\end{theorem}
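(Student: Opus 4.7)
The plan is to prove the two parts separately: (i) the pointwise equivalence that $\zh$ is contact if and only if $\zW$ is a volume form, and (ii) the existence of the normal form via the Reeb flow.

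For (i), I would argue pointwise. Fix $x\in M$ with $\zh_x\neq 0$, so $C_x=\ker\zh_x$ is a hyperplane in $\sT_xM$. Picking any $R\in\sT_xM$ with $\zh_x(R)=1$ gives a direct-sum splitting $\sT_xM=\R\cdot R\op C_x$ and a decomposition $\xd\zh_x=\zh_x\we\zg+\zd$, where $\zd$ is a $2$-form supported on $C_x$, i.e., $\zd=\xd\zh_x|_{C_x}$. Using $\zh_x\we\zh_x=0$, a short expansion gives $\zh_x\we(\xd\zh_x)^n=\zh_x\we\zd^n$, which is a volume form on $\sT_xM$ precisely when $\zd^n\neq 0$ on $C_x$, i.e., when $\xd\zh_x|_{C_x}$ is a nondegenerate $2$-form on the $2n$-dimensional space $C_x$. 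To tie this to the definition, I would use the identity $\xd\zh(X,Y)=-\zh([X,Y])$ valid for $X,Y\in C=\ker\zh$; after trivializing $L^C=\sT M/C$ via the isomorphism induced by $\zh$, this shows that $\zn^C$ coincides with $-\xd\zh|_C$, so nondegeneracy of $\zn^C$ is equivalent to nondegeneracy of $\xd\zh|_C$, completing the equivalence.

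For (ii), assume $\zh$ is contact. Since $\xd\zh$ has rank $2n$ and its one-dimensional kernel must be complementary to $C$ (as $\zh$ is nonzero on any nonzero element of $\ker(\xd\zh)$), there is a unique vector field $\cR$ on $M$ satisfying $\zh(\cR)=1$ and $\si_\cR\xd\zh=0$; Cartan's formula $\Ll_\cR\zh=\xd(\zh(\cR))+\si_\cR\xd\zh$ then yields $\Ll_\cR\zh=0$, so $\zh$ is preserved by the Reeb flow. Fix $x_0\in M$ and choose a $2n$-dimensional local submanifold $N\ni x_0$ transverse to $\cR$; since $\ker(\xd\zh)=\R\cR$ is transverse to $\sT N$, the pull-back $\xd\zh|_N$ is symplectic on $N$. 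The symplectic Darboux theorem produces coordinates $(q^i,p_i)$ on a neighbourhood of $x_0$ in $N$ with $\xd\zh|_N=\xd p_i\we\xd q^i$, and since $\zh|_N+p_i\xd q^i$ is then closed, the Poincar\'e lemma yields a function $f(q,p)$ with $\zh|_N=-p_i\xd q^i+\xd f$.

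To finish, I would transport $(q^i,p_i)$ to a full neighbourhood in $M$ by declaring them constant along the Reeb flow and introducing the flow parameter $z$ (with $z=0$ on $N$) as a new coordinate, in which $\cR=\pa_z$. Reeb-invariance $\Ll_{\pa_z}\zh=0$ kills the $z$-dependence of all coefficients of $\zh$, while $\zh(\pa_z)=1$ forces the coefficient of $\xd z$ to equal $1$; hence $\zh=\xd z+\zb(q,p)$ with $\zb=\zh|_N=-p_i\xd q^i+\xd f$. Replacing $z$ by $z+f(q,p)$ absorbs the exact piece and gives the Darboux form $\zh=\xd z-p_i\xd q^i$. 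The most delicate points I anticipate are the clean identification of $\zn^C$ with $-\xd\zh|_C$ through the $\zh$-trivialization of $L^C$ in step (i), and, in (ii), upgrading the Darboux chart on $N$ to one on $M$ in which $\cR=\pa_z$ holds on the whole neighbourhood; both are routine but require checking that transversality of $N$ to $\cR$ persists on a sufficiently small open set around $x_0$.
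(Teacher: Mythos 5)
The paper states this theorem without proof (it is quoted as a classical fact, with no argument given in the text), so there is nothing internal to compare against. Your argument is the standard one and is essentially correct: in part (i), the decomposition $\xd\zh_x=\zh_x\we\zg+\zd$ with $\si_R\zd=0$ gives $\zh_x\we(\xd\zh_x)^n=\zh_x\we\zd^n$, which correctly reduces the volume-form condition to nondegeneracy of $\xd\zh_x$ on $C_x$, and the identity $\xd\zh(X,Y)=-\zh([X,Y])$ for $X,Y\in\ker\zh$ correctly identifies $\zn^C$ with $-\xd\zh|_C$ under the $\zh$-trivialization of $L^C$, tying the computation to the paper's definition of a contact structure. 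In part (ii), the existence and invariance of the Reeb field, the choice of a transversal $N$ on which $\xd\zh$ restricts to a symplectic form, and the flow-box extension in which $\cR=\pa_z$ and $\Ll_{\pa_z}\zh=0$ forces $\zh=\xd z+\zb(q,p)$ with $\zb=\zh|_N$, are all sound.

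One bookkeeping slip: with your stated normalization $\xd\zh|_N=\xd p_i\we\xd q^i$, the form $\zh|_N+p_i\,\xd q^i$ is \emph{not} closed (its differential is $2\,\xd p_i\we\xd q^i$); the closed combination is $\zh|_N-p_i\,\xd q^i$, which gives $\zh|_N=p_i\,\xd q^i+\xd f$ and hence $\zh=\xd(z+f)+p_i\,\xd q^i$ after the shift. To land on the stated form $\xd z-p_i\,\xd q^i$ you should either choose the Darboux chart so that $\xd\zh|_N=\xd q^i\we\xd p_i$, or relabel $p_i\mapsto-p_i$ at the end. This is purely cosmetic and does not affect the validity of the proof.
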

\no It is clear from the above theorem that contact forms are nondegenerate in the sense of Definition \ref{d1}. A sort of a converse of this statement is also true.
\begin{proposition}
Let $\zh$ be a nondegenerate 1-form on a manifold $M$ of dimension $m$. Then,
\begin{description}
\item{(a)} the form $\zh$ is a contact form if $m$ is odd;
\item{(b)} the form $\zw=\xd\zh$ is a symplectic form if $m$ is even.
\end{description}
In other words, manifolds equipped with a nondegenerate 1-form are either co-oriented contact manifolds or exact symplectic manifolds, depending on the parity of the dimension of $M$.
\end{proposition}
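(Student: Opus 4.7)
The plan is to reduce both claims to pointwise linear algebra on each tangent space $\sT_q M$. Nondegeneracy of $\zh$ means $\ker(\zh_q)\cap\ker(\xd\zh_q)=\{0\}$ at every $q\in M$, and the workhorse is the elementary dimension inequality
$$\dim\bigl(\ker\zh_q\cap\ker\xd\zh_q\bigr)\ge\dim\ker\zh_q+\dim\ker\xd\zh_q-m.$$
Since $\xd\zh_q$ is skew-symmetric, its rank is even, say $2k$, so $\dim\ker\xd\zh_q=m-2k$; meanwhile $\dim\ker\zh_q$ equals $m$ or $m-1$ according to whether $\zh_q$ vanishes or not. The parity of $m$ then forces rigid conclusions in each case.

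For (b), with $m=2n$: if $\zh_q=0$ the nondegeneracy hypothesis immediately yields $\ker\xd\zh_q=\{0\}$; if $\zh_q\ne 0$ then $\dim\ker\zh_q=2n-1$ and the inequality rearranges to $0\ge 2n-1-2k$, forcing $k=n$ and again $\ker\xd\zh_q=\{0\}$. Thus $\xd\zh$ is pointwise nondegenerate, and closed by construction, so $\zw=\xd\zh$ is symplectic.

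For (a), with $m=2n+1$: the first observation is that $\zh_q=0$ is impossible, since it would require $\xd\zh_q$ to be nondegenerate on an odd-dimensional space, and no skew-symmetric 2-form on an odd-dimensional space is nondegenerate. Hence $\dim\ker\zh_q=2n$, and the inequality gives $0\ge 2n-2k$, so $k=n$. Therefore $\xd\zh_q$ has maximal even rank $2n$ with one-dimensional kernel $\R R_q$, and the nondegeneracy condition pins down $\zh_q(R_q)\ne 0$. The standard fact that a 2-form of rank $2n$ on a $(2n{+}1)$-dimensional space satisfies $(\xd\zh_q)^n\ne 0$ with $\ker(\xd\zh_q)^n=\ker\xd\zh_q=\R R_q$ then shows that $\zh_q\wedge(\xd\zh_q)^n$ evaluates to a nonzero scalar on $(R_q,e_1,\dots,e_{2n})$ for any complement $e_1,\dots,e_{2n}$ of $R_q$. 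Thus $\zh\wedge(\xd\zh)^n$ is a nowhere-vanishing volume form, and by the Contact Darboux Theorem quoted above $\zh$ is a contact form.

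There is no real obstacle; the argument is purely pointwise linear algebra. The only place requiring care is distinguishing the cases $\zh_q=0$ and $\zh_q\ne 0$: in even dimension both are compatible with the nondegeneracy of $\zh$, while in odd dimension the parity mismatch between $\rk\xd\zh_q$ and $\dim\sT_q M$ eliminates the vanishing case and is precisely what makes (a) work.
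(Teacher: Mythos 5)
Your proof is correct and follows essentially the same route as the paper: pointwise dimension counting of $\ker\zh_q$ and $\ker\xd\zh_q$ via the intersection inequality, with parity of the (necessarily even) rank of $\xd\zh_q$ doing the work. The only cosmetic differences are that you make explicit the exclusion of the case $\zh_q=0$ (which the paper leaves implicit) and that in (a) you pass through the volume-form criterion $\zh\we(\xd\zh)^n\ne 0$ rather than directly verifying nondegeneracy of $\xd\zh$ on $\ker\zh$; both finish the argument equally well.
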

\begin{proof}
(a) \ As $m=2r+1$, the rank of $\xd\zh$ must be everywhere $2r$. Indeed, being even this rank is at most $2r$. If at a point $x\in M$ this rank is $\le 2r-2$, then $\dim(\ker(\xd\zh(x)))\ge 3$, so the dimension of the intersection of $\ker(\xd\zh(x))$ with $\ker(\zh(x))$ is at least 2; a contradiction. Since $\ker(\xd\zh)$ is 1-dimensional and trivially intersects $\ker(\zh)$, the 2-form $\xd\zh$ is nondegenerate on $\ker(\zh)$, thus $\zh$ is a contact form.

\mn (b) \ Suppose now that $m=2r$. We should show that $\ker(\xd\zh)$ is trivial at all points.
Indeed, if $\ker(\xd\zh)$ is not trivial at a point $x$, then its dimension at $x$ is at least 2.
But the dimension of $\ker(\zh(x))$ is at least $(m-1)$, thus the intersection of these kernels
is at least 1 at $x$; a contradiction.

\end{proof}
\begin{corollary}[Contact reduction]\label{cc1} If $\zh$ is a 1-form on a manifold $M$ and $N$ is a simple submanifold of $(M,\zh)$, then the 1-form $\zh_{red}^N$ obtained by the $\zh$-reduction of $N$ (Theorem \ref{dfr}) is a contact form if the dimension of $N_{red}^\zh$ is odd, and a symplectic
potential, i.e., $\xd(\zh_{red}^N)$ is symplectic, if this dimension is even.
\end{corollary}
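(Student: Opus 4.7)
The plan is to combine Theorem \ref{dfr} with the preceding Proposition in a direct way: the reduction procedure produces a nondegenerate 1-form, and then the parity dichotomy for nondegenerate 1-forms does all the remaining work.

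First I would invoke Theorem \ref{dfr} applied to the 1-form $\zh$ and the simple submanifold $\si_N:N\hookrightarrow M$. This yields a nondegenerate 1-form $\zh_{red}^N$ on $N_{red}^\zh = N/\zq(\zh_N)$ satisfying $p_\zh(N)^*(\zh_{red}^N)=\zh_N$. The crucial word here is \emph{nondegenerate} in the sense of Definition \ref{d1}: the characteristic distribution of $\zh_{red}^N$ consists only of zero vectors. This is exactly the hypothesis needed to apply the previous Proposition.

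Next I would split into the two cases according to the parity of $\dim N_{red}^\zh$. If $\dim N_{red}^\zh$ is odd, then part (a) of the Proposition applied to the nondegenerate 1-form $\zh_{red}^N$ gives immediately that $\zh_{red}^N$ is a (co-oriented) contact form. If $\dim N_{red}^\zh$ is even, then part (b) of the Proposition tells us that $\xd(\zh_{red}^N)$ is a symplectic form on $N_{red}^\zh$, so by Definition \ref{forms}(d) the form $\zh_{red}^N$ is a symplectic potential for the symplectic manifold $(N_{red}^\zh,\xd\zh_{red}^N)$.

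There is essentially no obstacle: once Theorem \ref{dfr} is in hand, the statement reduces to verifying that nondegeneracy of a 1-form is preserved by the reduction (which is the defining property of the reduction) and then reading off the conclusion from the parity-dependent proposition. The only point worth being slightly careful about is that ``nondegenerate'' in Definition \ref{d1} refers to the characteristic distribution $\zq(\zh)=\ker(\zh)\cap\ker(\xd\zh)$ rather than merely to $\ker(\zh)$; but this is precisely what Theorem \ref{dfr} delivers, since the reduction is by the characteristic distribution itself.
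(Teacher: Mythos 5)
Your proposal is correct and follows exactly the route the paper intends: the corollary is an immediate consequence of Theorem \ref{dfr} (which delivers a nondegenerate 1-form on the reduced manifold) combined with the parity dichotomy of the preceding Proposition, which is why the paper states it without a separate proof. Your remark that ``nondegenerate'' here means triviality of the characteristic distribution $\zq(\zh)=\ker(\zh)\cap\ker(\xd\zh)$, precisely the property guaranteed by reducing along that distribution, correctly identifies the only point requiring care.
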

\noindent The following is a particular case of the well-known Poisson-to-Jacobi reduction (see e.g. \cite{Grabowski:2004}).
\begin{proposition}[Symplectic-to-contact reduction]
Let $(M,\zw)$ be a symplectic manifold and $N$ be its closed submanifold in $M$ of codimension 1. If $X$ is a vector field defined in a neighbourhood of $N$ which is transversal to $N$ and satisfies $\Ll_X\zw=\zw$, then the 1-form $\zh$ on $N$ defined by
$$\zh_x(Y)=\zw_x(X,Y), \ \text{where}\ x\in N,\ Y\in\sT_xN,$$
is a contact form on $N$.
\end{proposition}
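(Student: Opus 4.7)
The plan is to exhibit $\zh$ as the pullback to $N$ of a local symplectic potential for $\zw$ and then invoke the preceding Proposition on nondegenerate 1-forms. Since $\zw$ is closed, Cartan's magic formula converts the hypothesis $\Ll_X\zw=\zw$ into
$$\xd(\si_X\zw)=\xd\,\si_X\zw+\si_X\xd\zw=\Ll_X\zw=\zw,$$
so that $\zb:=\si_X\zw$ is a symplectic potential for $\zw$ on a neighbourhood of $N$. By the very definition of $\zh$ one has $\zh=\si_N^*\zb$, and hence $\xd\zh=\si_N^*\zw$, i.e., $\xd\zh$ is the restriction of $\zw$ to $N$.

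Next, I verify that $\zh$ is nondegenerate in the sense of Definition \ref{d1}. Pick $x\in N$ and $Y\in\sT_xN$ with $\si_Y\zh=0$ and $\si_Y\xd\zh=0$. The first equation reads $\zw_x(X_x,Y)=0$, the second $\zw_x(Z,Y)=0$ for every $Z\in\sT_xN$. Transversality of $X$ to $N$ means $\sT_xM=\R\cdot X_x\oplus\sT_xN$, so $Y$ is $\zw$-orthogonal to the whole of $\sT_xM$; nondegeneracy of $\zw$ then forces $Y=0$.

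Finally, since $\dim N=\dim M-1$ is odd, part (a) of the preceding Proposition applies to the nondegenerate 1-form $\zh$ and identifies it as a contact form on $N$. No step here is delicate: the whole argument is a one-line application of Cartan calculus combined with a linear-algebra observation about transversality. What deserves emphasis, and is essentially the only subtle point, is that Step 1 uses $\xd\zw=0$ in an essential way (so the conclusion is specific to symplectic and not merely presymplectic $M$), and that the transversality hypothesis --- strictly stronger than the mere nonvanishing of $X$ along $N$ --- is exactly what is required to close the argument in Step 2.
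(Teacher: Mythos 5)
Your proof is correct and follows essentially the same route as the paper's: Cartan's formula (with $\xd\zw=0$) gives $\xd\zh=\zw\,\big|_N$, and transversality plus nondegeneracy of $\zw$ forces $\zq(\zh)=\ker(\zh)\cap\ker(\xd\zh)=\{0\}$. The only cosmetic difference is that you conclude by citing the general proposition that a nondegenerate 1-form on an odd-dimensional manifold is a contact form, whereas the paper inlines that last step by noting $\xd\zh$ has corank 1; both are equivalent.
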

\begin{proof}
The 1-form $\zh$ is clearly nonvanishing and
$$\xd\zh=\xd(\si_X\zw\,\big|_N)=(\xd\si_X\zw)\,\big|_N=(\Ll_X\zw)\,\big|_N=\zw\,\big|_N.$$
Hence $\xd\zh$ is of corank 1, so it suffices to show that $$\zq(\zh)=\ker(\zh)\cap\ker(\xd\zh)=\{0\}.$$
But if $Y_0\in\zq(\zh)$, then $\zw(X,Y_0)=0$ and $\zw(Y,Y_0)=0$ for all vector fields $Y$ on $N$.
Hence, $Y_0\in\ker(\zw)=\{0\}$.

\end{proof}
\begin{remark} Any contact form $\zh$ on $M$ determines uniquely a vector field $\cR$ on $M$, called the \emph{Reeb vector field}, which is characterized by the equations
$$i_\cR\zh=1\quad \text{and}\quad i_\cR\xd\zh=0.$$
For the contact form (\ref{Dc}) we have $\cR=\pa_z$.
\end{remark}
It is obvious that any co-oriented contact manifold $(M,\zh)$ of dimension $(2n+1)$ is automatically presymplectic with the exact presymplectic form $\xd\zh$ of rank $2n$. In this case the involutive distribution $\ker(\xd\zh)$ is generated by the Reeb vector field $\cR$. The contact structures in connection to physics and Hamiltonian mechanics became recently a subject of intensive studies (see, e.g., \cite{Bravetti:2017,Bruce:2017,Ciaglia:2018,deLeon:2019,Grabowska:2022a,Grabowska:2023,Grabowski:2013}). For a purely geometric approach to contact Hamiltonian mechanics which serves for general contact structures (not necessarily co-oriented) we refer to \cite{Grabowska:2022}.
\begin{definition}
If the distribution $\la\cR\ran=\ker(\xd\zh)$ is simple, we call the contact manifold $(M,\zh)$ \emph{regular}.
\end{definition}
\no The following is obvious.
\begin{proposition}[Contact-to-symplectic reduction]
If $(M,\zh)$ is a regular contact manifold, then $\zw=\xd\zh/\la\cR\ran$ is a symplectic form on the reduced manifold $M_{red}=M/\la\cR\ran$. Moreover, $\zw$ is uniquely characterized as a 2-form on $M$ such that $p^*(\zw)=\xd\zh$, where $p:M\to M_{red}$ is the canonical surjective submersion.
\end{proposition}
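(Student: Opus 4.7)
The plan is to apply Corollary \ref{formred} (or equivalently Theorem \ref{dfr} with $N=M$) to the 2-form $\xd\zh$ with respect to the distribution $\la\cR\ran$, and then check closedness and uniqueness separately using that $p$ is a surjective submersion.

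First, I would identify the characteristic distribution of $\xd\zh$ with $\la\cR\ran$. Since $\xd\zh$ is closed, we have $\zq(\xd\zh)=\ker(\xd\zh)$. The Reeb vector field satisfies $\si_\cR\xd\zh=0$, so $\cR\in\ker(\xd\zh)$. On the other hand, by the Contact Darboux Theorem, $\xd\zh$ has rank exactly $2n$ on a $(2n+1)$-dimensional manifold, so its kernel is 1-dimensional. Hence $\zq(\xd\zh)=\la\cR\ran$, which is simple by assumption.

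Second, I would invoke the invariance statement. Lemma \ref{l2}(c) (or a direct application of the magic Cartan formula $\Ll_\cR\xd\zh=\si_\cR\xd\xd\zh+\xd\si_\cR\xd\zh=0$) shows that $\xd\zh$ is $\la\cR\ran$-invariant. Corollary \ref{formred} then yields a unique 2-form $\zw$ on $M_{red}=M/\la\cR\ran$ such that $p^*(\zw)=\xd\zh$, and since we are reducing by the full characteristic distribution, the corollary guarantees $\zw$ is nondegenerate on $M_{red}$.

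Third, I would verify that $\zw$ is closed. From $p^*\zw=\xd\zh$ we get $p^*(\xd\zw)=\xd(p^*\zw)=\xd\xd\zh=0$. Because $p$ is a surjective submersion, the pull-back map $p^*$ on differential forms is injective, hence $\xd\zw=0$. Combined with nondegeneracy, $\zw$ is symplectic. The uniqueness of $\zw$ is immediate from the same injectivity of $p^*$: any other 2-form $\zw'$ with $p^*(\zw')=\xd\zh$ must coincide with $\zw$. There is no real obstacle here; the only point that requires a small argument is the identification $\zq(\xd\zh)=\la\cR\ran$, which is the reason the proposition is labelled as obvious and follows quickly from the dimension count.
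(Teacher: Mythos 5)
Your proof is correct and follows exactly the route the paper intends: the paper offers no proof (it declares the proposition obvious after having set up Lemma \ref{l2} and Corollary \ref{formred} precisely for this purpose), and your identification of $\ker(\xd\zh)$ with $\la\cR\ran$, the application of Corollary \ref{formred}, and the closedness/uniqueness argument via injectivity of $p^*$ for a surjective submersion are the intended details.
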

\begin{definition} The procedure of passing from $(M,\zh)$ to $(M_{red},\zw)$ we call \emph{contact-to-symplectic reduction}, and the contact manifold $(M,\zh)$ -- a \emph{contactification} of the symplectic manifold $(M_{red},\zw)$.
\end{definition}
\begin{example}\label{exx}
It is easy to see that any exact symplectic manifold $(N,\zw)$, where
$\zw=\xd\zvy$, admits a canonical contactification. We just put $M=N\ti\R$ and $\zh(x,t)=\xd t+\zvy(x)$. It is easy to see that the Reeb vector field is $\cR=\pa_t$ and $\xd\zh(x,t)=\zw(x)$. This construction was called a \emph{contactification} in \cite[Appendix 4]{Arnold:1989}.
\end{example}
The situation is generally much more complicated for symplectic manifolds $(N,\zw)$ such that $\zw$ is not exact. It is always the case if $N$ is compact, so the canonical Fubini-Study symplectic forms on the complex projective spaces $\C\P^n$ are never exact. Note also that removing, say, one point from a contactification, we get a new contactification, that shows that contactifications of a given symplectic manifold are never unique. On the other hand, we have the following.
\begin{proposition}
Every symplectic manifold $(N,\zw)$ admits a contactification.
\end{proposition}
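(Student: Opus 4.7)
My plan is to reduce to Example~\ref{exx} locally and then patch. First I would fix a good open cover $\{U_\alpha\}$ of $N$ (every $U_\alpha$ and every finite intersection contractible). By the Poincar\'e lemma, on each $U_\alpha$ the symplectic form admits a primitive: $\zw\big|_{U_\alpha}=\xd\zvy_\alpha$ for some $\zvy_\alpha\in\Omega^1(U_\alpha)$. On the product $M_\alpha:=U_\alpha\ti\R$ with fiber coordinate $t$, Example~\ref{exx} then provides the contact form $\zh_\alpha:=\xd t+\zvy_\alpha$, whose Reeb field is $\pa_t$ and whose contact-to-symplectic reduction is exactly $(U_\alpha,\zw\big|_{U_\alpha})$.

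Next I would patch the $M_\alpha$'s. On every overlap $U_{\alpha\beta}:=U_\alpha\cap U_\beta$ the closed 1-form $\zvy_\alpha-\zvy_\beta$ is exact by contractibility, so $\zvy_\alpha-\zvy_\beta=\xd f_{\alpha\beta}$ for some $f_{\alpha\beta}\in\Ci(U_{\alpha\beta})$. The fiber-translation $t_\beta=t_\alpha+f_{\alpha\beta}$ intertwines the local contact forms: $\xd t_\beta+\zvy_\beta=\xd t_\alpha+\xd f_{\alpha\beta}+\zvy_\beta=\xd t_\alpha+\zvy_\alpha=\zh_\alpha$, and it sends $\pa_{t_\alpha}$ to $\pa_{t_\beta}$. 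Thus on a common total space $M$ the $\zh_\alpha$ have a chance to piece together into a globally defined 1-form $\zh$.

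The step I expect to be the main obstacle is the cocycle condition on triple overlaps. The locally constant functions $c_{\alpha\beta\gamma}:=f_{\alpha\beta}+f_{\beta\gamma}-f_{\alpha\gamma}$ form a \v{C}ech 2-cocycle whose class in $\check H^2(N,\R)\cong H^2_{\mathrm{dR}}(N,\R)$ coincides with $[\zw]$. When $[\zw]=0$ the $f_{\alpha\beta}$ can be corrected by constants so that the cocycle vanishes, and the patching produces a trivial $\R$-principal bundle over $N$, recovering Example~\ref{exx}. When $[\zw]\ne 0$ no global $\R$-principal bundle with connection of curvature $\zw$ exists on $N\ti\R$, since any connection form pulled back from such a bundle would force $\zw$ to be exact. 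To bypass this I would follow the construction of \cite{Grabowska:2023}: one builds $M$ as a suitable non-principal 1-dimensional bundle (enlarging the gluing data so as to absorb the 2-cocycle $\{c_{\alpha\beta\gamma}\}$; in the integral case this specializes to the Boothby--Wang $S^1$-bundle, and in general to an $\R$-bundle whose fibers are the Reeb orbits), on which the local forms $\zh_\alpha$ nevertheless assemble into a globally defined 1-form $\zh$.

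Finally, once $(M,\zh)$ is in hand, the required properties of a contactification are local and immediate from the model $\zh_\alpha=\xd t+\zvy_\alpha$: the form $\zh$ is contact because $\zh\we(\xd\zh)^n=\xd t\we\zw^n$ is a nonvanishing top form (since $\zvy_\alpha\we\zw^n=0$ for dimension reasons); the identity $\xd\zh=p^*\zw$ holds where $p:M\to N$ is the submersion onto the leaf space of the Reeb foliation, because locally $\xd\zh_\alpha=\zw|_{U_\alpha}$; and the Reeb foliation is simple with quotient $N$ by construction, its leaves being precisely the $\pa_t$-orbits over each $U_\alpha$.
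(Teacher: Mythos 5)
Your proposal has a genuine gap at exactly the step you flag as ``the main obstacle,'' and the fix you sketch there cannot work. When $[\zw]\ne 0$ you propose to absorb the \v{C}ech 2-cocycle $\{c_{\alpha\beta\gamma}\}$ into the gluing data of a 1-dimensional bundle over $N$, citing \cite{Grabowska:2023}. But that reference (quoted as a theorem in this paper) proves the opposite-direction statement: any \emph{connected} regular complete contact manifold is either an $S^1$-principal bundle, in which case $\zw$ must satisfy the integrality condition $[\zw/2\pi\hbar]\in\sH^2(N,\Z)$, or an $\R$-principal bundle, in which case $\zw$ must be exact. An affine $\R$-bundle glued by translations $t\mapsto t+f_{\alpha\beta}$ is always trivializable (the relevant sheaf is fine), so that branch forces $[\zw]=0$; the $S^1$-branch forces integrality. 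For a symplectic form whose periods are not commensurable --- e.g.\ $a\zw_1\oplus b\zw_2$ on $S^2\ti S^2$ with $a/b$ irrational --- neither branch is available, and whether a connected contactification exists at all is precisely the open Conjecture stated at the end of the paper. So your argument, if it worked, would prove something that is not known to be true; as written, the cocycle simply cannot be absorbed.

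The paper's proof avoids the issue entirely by not asking for connectedness: it places the local models at pairwise disjoint heights, $V_n=U_n\ti(n,n+1)\subset N\ti\R$, each carrying $\zh_n=\xd t+\zvy_n$, and takes $M=\bigcup_n V_n$. Since the intervals $(n,n+1)$ are disjoint, the $V_n$ never overlap, there is nothing to glue, and the 1-forms $\zh_n$ trivially assemble into a global contact form on the (disconnected) manifold $M$. Your local analysis (Poincar\'e lemma on a good cover, the model $\zh_\alpha=\xd t+\zvy_\alpha$, the verification that $\zh\we(\xd\zh)^n=\xd t\we\zw^n\ne 0$ and $\xd\zh=p^*\zw$) is correct and is exactly what the paper uses on each piece; the lesson is that the proposition as stated is cheap precisely because disconnected contactifications are allowed, while the connected version is a hard open problem, not a patching exercise.
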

\begin{proof}
Consider a locally finite atlas $\{(U_n,\zf_n)\}_{n\in\N}$ on $N$ with contractible open subsets $U_n$. Let $\zvy_n$ be a potential for $\zw$ on $U_n$, $\xd\zvy_m=\zw\,\big|_{U_n}$, and equip $V_n=U_n\ti(n,n+1)\subset N\ti\R$ with the contact form $\zh_n=\xd t+\zvy_n$, where $t$ is the standard coordinate in $\R$. The manifold $M=\cup_nV_n\subset N\ti\R$ with the contact form $\zh$ such that $\zh\,\big|_{V_n}=\zh_n$ is a contactification of $(N,\zw)$. Indeed, the Reeb vector field is $\pa_t$ on $N\ti\R$ restricted to $M$, so $(M,\zh)$ is regular and $M\to M_{red}$ is the canonical projection $p:N\ti\R\to N$ restricted to $M$. Hence $M_{red}=N$ and $\xd\zh=p^*(\zw)$.
\end{proof}
The problem with the above contactification is that the contact manifold is non-connected, so it is not actually `global'. Therefore, it makes sense to consider only connected contactifications of connected symplectic manifolds which makes the problem of existence of a contactification really nontrivial. For instance, compact contactifications are quire rare, as in this case the Reeb vector field is complete.
\begin{definition}
If the Reeb vector field $\cR$ is a complete, we call the contact manifold $(M,\zh)$ \emph{complete}.
If $\cR$ is the generator of a principal action of a Lie group $\cG=\U(1)$ or $\cG=\R$ on $M$, we call the contact manifold $(M,\zh)$ \emph{principal}.
\end{definition}
\no Of course, any principal contact manifold is regular and complete. The main result in \cite{Grabowska:2023} states the converse.
\begin{theorem}[\cite{Grabowska:2023}]
Any connected regular and complete contact manifold $(M,\zh)$ is principal.
Moreover, if the structure group of the principal bundle $p:M\to N=M_{red}$ is $\U(1)$ and $2\pi\hbar$ is the minimal period of the flow of the Reeb vector field $\cR$, then the symplectic form $\zw$ on $N$ is $\Z_\hbar$-integral, where $\Z_\hbar=2\pi\hbar\cdot\Z$, i.e.,
$$\big[\zw/2\pi\hbar\big]\in\sH^2(N,\Z).$$
In the case of the $\R$-principal bundle, in turn, the symplectic form $\zw$ is exact, $\zw=\xd\zvy$ for a 1-form $\zvy$, and $(M\simeq N\ti\R,\zh)$ is the standard contactification of an exact symplectic manifold like in Example \ref{exx}.
\end{theorem}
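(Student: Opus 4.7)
The plan is to take the principality claim as supplied by the main theorem of \cite{Grabowska:2023}, which is the delicate step and the main obstacle. Its content: Cartan's formula gives $\Ll_\cR\zh=\xd(\si_\cR\zh)+\si_\cR\xd\zh=0$, so the Reeb flow $\Phi_t$ is a one-parameter group of contactomorphisms preserving $\zh$; since $\cR$ is complete, we obtain an $\R$-action on $M$, and by regularity the orbit space is the manifold $N=M_{red}$ with $p:M\to N$ a surjective submersion whose fibres are the $\R$-orbits. The nontrivial step is showing that the isotropy subgroup of every point of $M$ is the same closed subgroup of $\R$, hence either $\{0\}$ or a common lattice $T_{\min}\Z$; this is where the contact (rather than merely presymplectic) condition is used essentially, ensuring the period function of $\cR$ is constant on the connected manifold $M$, and yielding a free and proper action of $\R$ or $\U(1)=\R/T_{\min}\Z$, upgrading $p$ to a principal bundle.

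Granting this, in the $\U(1)$-case with minimal period $T_{\min}=2\pi\hbar$, I identify $\U(1)=\R/2\pi\hbar\Z$ with the standard $\R/2\pi\Z$ via $t\mapsto t/\hbar$. Under this identification, the fundamental vector field of the standard generator of $\Lie\U(1)\cong\R$ is $\hbar\cR$. Set $\za=\zh/\hbar$. Then $\za(\hbar\cR)=1$, and $\U(1)$-invariance of $\za$ is immediate from $\Ll_\cR\zh=0$, so $\za$ is a principal connection $1$-form. Its curvature is $\xd\za=\xd\zh/\hbar=p^*(\zw/\hbar)$, using the defining relation $p^*\zw=\xd\zh$. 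The Chern--Weil theorem for principal $\U(1)$-bundles asserts that $c_1(p)=[F/2\pi]\in\sH^2(N,\Z)$, where $F$ is the unique $2$-form on $N$ with $p^*F=\xd\za$. Substituting $F=\zw/\hbar$ gives $[\zw/(2\pi\hbar)]\in\sH^2(N,\Z)$, which is precisely the $\Z_\hbar$-integrality claim.

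In the $\R$-case, every principal $\R$-bundle is trivial because $\R$ is contractible (equivalently $B\R$ is contractible), so there exists a global section $\zs:N\to M$. The map $N\ti\R\to M$, $(x,t)\mapsto\Phi_t(\zs(x))$, is then an $\R$-equivariant diffeomorphism taking $\pa_t$ to $\cR$. In these coordinates $\zh(\pa_t)=1$, so I decompose $\zh=\xd t+\zb$ with $\si_{\pa_t}\zb=0$. The invariance $\Ll_{\pa_t}\zh=0$ combined with $\si_{\pa_t}\zb=0$ forces $\Ll_{\pa_t}\zb=0$, so the coefficients of $\zb$ are $t$-independent and $\zb=p^*\zvy$ for a unique $1$-form $\zvy$ on $N$. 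Consequently $\xd\zh=p^*\xd\zvy=p^*\zw$, hence $\zw=\xd\zvy$ is exact, and $(M,\zh)\simeq(N\ti\R,\xd t+p^*\zvy)$ reproduces the standard contactification of Example \ref{exx}. Once principality is in hand, both case analyses are routine; the sole substantial input is the constancy of the Reeb period on connected $M$ provided by \cite{Grabowska:2023}.
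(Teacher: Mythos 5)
Your proposal is correct. Note that the paper itself offers no proof of this statement --- it is imported wholesale from \cite{Grabowska:2023}, including the $\Z_\hbar$-integrality and the exactness claims --- so there is no in-paper argument to compare against; what you have done is correctly isolate the one genuinely hard ingredient (that on a connected, regular, complete contact manifold the Reeb flow has a globally constant isotropy subgroup of $\R$, making the action free and proper, hence principal) and then derive the two remaining assertions by standard means. Your $\U(1)$ case is a clean application of Chern--Weil: $\za=\zh/\hbar$ is a connection form for the generator whose fundamental vector field is $\hbar\cR$, its curvature descends to $\zw/\hbar$, and $c_1=[F/2\pi]$ gives $[\zw/2\pi\hbar]\in\sH^2(N,\Z)$ (the sign ambiguity in the Chern--Weil convention is immaterial since $\sH^2(N,\Z)$ is a group). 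Your $\R$ case --- triviality of the bundle, the splitting $\zh=\xd t+p^*\zvy$ forced by $\si_{\pa_t}\zb=0$ and $\Ll_{\pa_t}\zb=0$, and injectivity of $p^*$ on forms --- is likewise complete. The only caveat is the one you already flag: the constancy of the minimal period across the connected manifold is precisely where the contact hypothesis enters nontrivially, and that step remains deferred to the cited reference rather than proved.
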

\no We will explain closer the concept of $\Z_\hbar$-integrality in Section \ref{A}.
\begin{remark}
If $(M,\zh)$ is regular and compact contact manifold, then it is automatically complete, the structure group is $\U(1)$, and we get the celebrated Boothby-Wang theorem \cite{Boothby:1958} as a particular example. Note, however, that the original proof in \cite{Boothby:1958} contains a substantial gap.
\end{remark}
\begin{corollary}
Any compact contactification of a compact symplectic manifold is canonically an $\U(1)$-principal bundle with respect to an $\U(1)$-action induced by the flow of the Reeb vector field.
\end{corollary}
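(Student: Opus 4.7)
The plan is to reduce the statement directly to the Boothby--Wang-type theorem stated just above. By definition, a contactification $(M,\zh)$ of a symplectic manifold $(M_{red},\zw)$ is a regular contact manifold, so the Reeb orbit foliation $\la\cR\ran$ is simple and defines a submersion $p:M\to M_{red}$. Compactness of $M$ then gives completeness of $\cR$ for free, since any smooth vector field on a compact manifold has a global flow; hence $(M,\zh)$ is automatically regular \emph{and} complete.

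Assuming $M$ is connected (otherwise I would argue component by component, noting that each connected component surjects onto the connected base $M_{red}$ through the submersion $p$), the previous Theorem applies and asserts that $p:M\to M_{red}$ is a principal bundle whose structure group is either $\U(1)$ or $\R$, and whose action is induced by the flow of $\cR$. To finish the Corollary I need only rule out the $\R$-principal case. But the Theorem identifies that case with the standard contactification of Example \ref{exx}, giving $M\simeq M_{red}\ti\R$; equivalently, the fibres of $p$ would be diffeomorphic to the noncompact space $\R$, contradicting compactness of $M$ over the compact base $M_{red}$. Hence the structure group is forced to be $\U(1)$, and the $\U(1)$-action is the one induced by the Reeb flow (which must therefore have a common positive minimal period on every orbit).

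Canonicity is then immediate: the Reeb field $\cR$ is intrinsically determined by $\zh$ through $\si_\cR\zh=1$ and $\si_\cR\xd\zh=0$, and the $\U(1)$-action is in turn determined by $\cR$ together with its common period, so no choices enter the construction.

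The only real obstacle is the dichotomy between the two possible structure groups in the Theorem, and it is settled at once by the compactness hypothesis, which is incompatible with noncompact fibres $\R$. The mild bookkeeping needed to pass from the connected hypothesis of the previous Theorem to a possibly disconnected compact contactification is a routine reduction to connected components and does not affect the conclusion.
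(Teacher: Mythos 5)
Your proof is correct and follows essentially the same route as the paper, which disposes of the corollary via the preceding Remark: compactness gives completeness of $\cR$, the cited theorem then yields a principal bundle with structure group $\U(1)$ or $\R$, and the $\R$-case is excluded because it forces $M\simeq M_{red}\ti\R$ to be noncompact. Your extra care about connected components is harmless (and in fact unnecessary, since the fibres of $p$ are connected Reeb orbits, so a connected base already forces $M$ connected).
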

\section{Geometry of Quantum Mechanics}
The complex projective space $\C\P^{n-1}$ consists of pure states on the Hilbert space $\C^n$ and is a minimal coadjoint orbit of the unitary group $\U(n)$.
The standard Hermitian product $\la x\,|\,y\ran$ on $\C^n$ is, by convention, $\C$-linear with respect to $y$ and anti-linear with respect to $x$. The unitary group $\U(n)$ acts on $\C^n$
preserving this Hermitian product and consists of those complex matrices $U\in\gl(n;\C)$ which satisfy $UU^\dag=I$, where $U^\dag$ is the Hermitian conjugate of $U$, i.e.
$$
\la Ux\,|\,y\ran=\la x\,|\,\,U^\dag y\ran.
$$
The Lie algebra $\u(n)$ of $\U(n)$ is the real vector space of anti-Hermitian matrices $T^\dag=-T$ with the commutator bracket $[T,T']=TT'-T'T$. In what follows, we will write simply $\U$ for $\U(n)$ and $\u$ for $\u(n)$.

\mn The geometric approach to Quantum Mechanics is based on the observation that the
`realification' $\C^n_\R=\R^n\oplus i\R^n=\R^{2n}$ of $\C^n$ is a K\"ahler manifold $(\R^{2n},g,\zw)$, with
a Riemannian metric $g$ and a symplectic form $\zw$ defined by
$$ g(x,y)+i\cdot\zw(x,y)=\la x\,|\,y\ran.$$
In other words, $g(x,y)=\re\bkk{x}{y}$ and $\zw(x,y)=\im\bkk{x}{y}$, where $\re$ and $\im$ denote the real and the imaginary part of a complex number, respectively. The standard orthonormal basis $(e_k)$ of $\C^n$ induces global coordinates
$(q^k,p_l)$, $k,l=1,\dots,n$, on $\R^{2n}$ by
$$\la e_k\,|\,x\ran=(q^k+i\cdot p_k)(x).$$
In these coordinates, the vector $\pa_{q^k}$ represents $e_k$ and $\pa_{p_k}$ represents
$i\cdot e_k$. Hence, the Riemannian tensor reads
$$g=\sum_k\left(\xd q^k\ot\xd q^k+\xd p_k\ot\xd p_k\right)$$
and the symplectic form
$$\zw=\sum_k\xd q^k\we \xd p_k=\sum_k\left(\xd q^k\ot\xd p_k-\xd p_k\ot\xd q^k\right).$$
The 1-form
\be\label{zvy}\zvy=\half\sum_k\big(q^k\xd p_k-p_k\xd q^k\big)\ee
we will call the \emph{Liouville 1-form}. It is clearly a potential for $\zw$, $\xd\zvy=\zw$.
The unitary group $\U$ acts $\R$-linearly on $\R^{2n}\simeq\C^n$ and this action is simultaneously an isometry and a symplectomorphism,
$$g(x,y)+i\cdot\zw(x,y)=\la x\,|\,y\ran=\la Ux\,|\,Uy\ran=g(Ux,Uy)+i\cdot\zw(Ux,Uy).$$
Moreover, the Liouville 1-form $\zvy$ is $\U$-invariant.

\medskip One important convention we want to introduce following \cite{Grabowski:2005} is that we will
identify the real vector space of Hermitian operators $A=A^\dag$ with the dual
$\u^*$ of the real Lie algebra $\u$, according to the
pairing between Hermitian $A\in \u^*$ and anti-Hermitian $T\in \u$ operators,
$$\la A,T\ran=\frac{i}{2}\cdot\tr(A T).$$
The multiplication by $i$ establishes further vector space
isomorphisms
\be\label{iso}\u\ni T\mapsto iT\in \u^*\,,\quad \u^*\ni A\mapsto -iA\in \u,\ee which identify the adjoint $\Ad_U(T)=U T U^\dag$ with the coadjoint
$$(\Ad^*)_U(A)=(\Ad_{U^{-1}})^*(A)=U A U^\dag$$
action of the group $\U$. Under these isomorphisms, $\u^*$ and $\u$
become equipped with the scalar products
\be\label{metric1}\la A,B\ran_{u^*}=\frac{1}{2}\,\tr(AB)\,\quad \la T,T'\ran_{u}=-\frac{1}{2}\,\tr(TT'),
\ee
respectively, and a Lie bracket
\be\label{bra}[A,B]_{u^*}=-i[A,B],\ee
where $[A,B]=AB-BA$ is the commutator bracket. It is easy to see that the fundamental vector field
of the coadjoint action associated with $iA\in \u$ takes at $\zm\in\us$ the value $-i[A,\zm]=[A,\zm]_{u^*}$.
The scalar products (\ref{metric1}), the bracket (\ref{bra}) and the commutator bracket in $\u$ are invariant with respect to the coadjoint/adjoint action of the unitary group.
\begin{remark} The set of \emph{quantum states} can be defined as the subset $\mathbf{QS}$ in $\us$ consisting of those $\zm\in\us$ which are positive, $\bkk{x}{\zm x}\ge 0$, and have trace $1$, $\tr(\zm)=1$. It is invariant with respect to the coadjoint $\U$-action on $\us$. The subset $\mathbf{QS}$ in $\us$ is not a manifold if $\dim_\C(\cH)\ne 2$, but it is canonically stratified by submanifolds $\mathbf{QS}^k$, $k=1,\dots,\dim_\C(\cH)$, of $\us$, where $\mathbf{QS}^k$ consists of quantum states with rank $k$ (see \cite{Grabowski:2005}). Quantum states of rank $1$ we call \emph{pure}, and states of rank $k$ \emph{mixed}. Quantum states form a compact convex set in $\us$ whose extremal points are exactly pure states. In other words, any quantum state is a convex combination of pure states. These convex combinations are called in physics \emph{quantum mixtures}.
Note additionally that there is a finer stratification of $\mathbf{QS}$ by manifolds than the stratification by rank, namely the stratification by coadjoint orbits. Two quantum states belong to the same coadjoint orbit if and only if they have the same spectrum. For quantum states this spectrum consists of non-negative reals.
\end{remark}
\no The Lie algebra structure on $\u$ induces a linear Poisson tensor (Kostant-Kirillov-Souriau tensor) $\zL$  on $\us$ inducing a Poisson bracket $\{\cdot,\cdot\}_\zL$, which on linear functions
$$F_T(A)=\la T,A\ran=\frac{i}{2}\,\tr(TA),$$
with $T\in \u$, takes the form
$$
\{F_T,F_{T'}\}_\zL=F_{[T,T']}.
$$
The Hamiltonian vector field $X_{F_T}$ for this Poisson structure reads
$$X_{F_T}(\zm)=-\ad^*_T(\zm)=[T,\zm]=T\circ\zm-\zm\circ T,$$
so
$$\{F_T,F_{T'}\}_\zL=X_{F_T}(F_{T'})=\Bk{T'}{-\ad_T^*(\zm)}=\Bk{\zm}{[T,T']}.$$
It is well known that the characteristic distribution of this Poisson structure consists of coadjoint orbits of $\U$. The canonical symplectic structure $\zw^\cO$ on the $\U$-orbit $\cO$ is called the \emph{Kostant-Kirillov-Souriau structure} (KKS structure in short).

Vectors tangent to the orbit $\cO$ at $\zm$ are therefore vectors of the Hamiltonian vector fields at $\zm$, thus have the form $\ad^*_T(\zm)=[\zm,T]$, where $T\in\u$. Actually, we can identify $\sT_\zm\cO$ with $\u/\u_\zm$, where $\u_\zm$ is the Lie subalgebra in $\u$ of those $T$ for which $\ad^*_T(\zm)=[\zm,T]=0$. This is the Lie algebra of the subgroup $\mathrm{U}_\zm$ of $\U$ being the isotropy subgroup of $\zm\in\us$ with respect to the coadjoint action. Consequently, the symplectic form $\zw^\cO$ can be defined \emph{via} (this, in fact, is valid for any Lie group)
\be\label{KKSf}
\zw^\cO_\zm\big(\ad^*_X(\zm),\ad_Y^*(\zm)\big)=\Bk{\zm}{[X,Y]}.
\ee
\section{Contactifications of complex projective spaces}
Let us go back to the symplectic $\U$-action on $\R^{2n}$. It is easy to see that the map
$$J:\C^n\to \us,\quad J(x)=\zr_x=\kb{x}{x},$$
where we use the Dirac notation $\kb{x}{x}(y)=\bk{x}{y}x$, is an equivariant moment map for this action.
In particular, the momentum map image of the $(2n-1)$-dimensional sphere $S^{2n-1}=\{ x\in\C^n:\Vert x\Vert=1\}$ is the manifold
$$\cO=\big\{\kb{x}{x}:\,\Vert x\Vert=1\big\}$$
of pure states. This is a coadjoint orbit, so it posses the KKS symplectic structure (\ref{KKSf}). Hence, vectors tangent to the submanifold ${\cO}$ in $\us$ at $\zr_x$ have the form
$$[T,\zr_x]=\kb{x}{Tx}+\kb{Tx}{x}$$
for $T\in\u$. Writing $y=Tx$, we get that these tangent vectors are of the form
$$A_{x,y}=\kb{x}{y}+\kb{y}{x}\in\us.$$
But $T$ is anti-Hermitian, so $y\in\C^n$ cannot be arbitrary. Indeed,
$$0=\bkk{Tx}{x}+\bkk{x}{Tx}=2\re\bkk{x}{Tx}=2g(x,y).
$$
so $y\in\R^{2n}$ must be tangent to the unit sphere, $y\in\sT_xS^{2n-1}$, and it is easy to see that
every vector tangent to ${\cO}$ at $\zr_x$ is of this form. But
$$\sT_xJ(y)=\kb{x}{y}+\kb{y}{x},$$
so
$$J_0=J\big|_{S^{2n-1}}:S^{2n-1}\to{\cO}$$
is a surjective submersion (fibration). It is easy to see that the kernel of $\sT_xJ_0$ contains  vectors
$ix$, so the kernel of $\sT J_0$ consists of vectors tangent to orbits of the subgroup
$$\U(1)=\big\{e^{it}\cdot I\,|\, t\in\R\big\}=\big\{zI\,|\, z\in\C,\ \vert z\vert=1\big\}$$
of $\U$.
This shows that $S^{2n-1}$ is a principal bundle over ${\cO}$ with the structure group $\U(1)=S^1$, thus ${\cO}=S^{2n-1}/S^1$. Since, in turn, $S^{2n-1}=\big(\C^n\big)^\ti/\R_+$, where the multiplicative group $\R_+$ of positive reals acts freely on $\big(\C^n\big)^\ti=\C^n\setminus\{0\}$ by multiplication, we get finally that ${\cO}$ is the complex projective space $\C\P^{n-1}$ of real dimension $(2n-2)$,
$${\cO}=S^{2n-1}/S^1=\Big(\big(\C^n\big)^\ti/\R_+\Big)/S^1=
\big(\C^n\big)^\ti/\C^\ti=\C\P^{n-1}.$$
Here, the multiplicative group of non-zero complex numbers $\C^\ti$ acts freely and properly on $\C^n$ by the complex multiplication. Note that for $n=2$ we get the celebrated \emph{Hopf fibration}. We will analyse closer this fiber bundle in Section \ref{B}.

\mn We already know that ${\cO}$ is canonically a symplectic manifold and the KKS symplectic form $\zw^\cO$ on ${\cO}$ reads (\ref{KKSf})
\be\label{KS}\zw^\cO_{\zr_x}(A_{x,y},A_{x,y'})=\zw^\cO_{\zr_x}\big([\zr_x,T_{x,y}],[\zr_x,T_{x,y'}]\big)
=\Bk{\zr_x}{[T_{x,y},T_{x,y'}]}=\frac{i}{2}\,\tr\big(\zr_x\circ[T_{x,y},T_{x,y'}]\big).
\ee
Here, $T_{x,y}$ is any $T\in\u$ such that $T(x)=y$. But for $\Vert x\Vert=1$ and $\bk{x}{y}=0$ we have
\beas&\tr\big(\zr_x\circ[T_{x,y},T_{x,y'}]\big)=\Bk{x}{[T_{x,y},T_{x,y'}](x)}=
\bkk{T_{x,y'}(x)}{T_{x,y}(x)}-\bkk{T_{x,y}(x)}{T_{x,y'}(x)}\\
&=\bkk{y'}{y}-\bkk{y}{y'}=2i\im\bkk{y'}{y}.
\eeas
This, combined with (\ref{KS}), yields
$$\zw^\cO_{\zr_x}\big(A_{x,y},A_{x,y'}\big)=\im\bkk{y}{y'}=\zw(y,y').$$
Since $A_{x,y}=(\sT_xJ_0)(y)$ for $y\in\sT_xS^{2n-1}$, the above means that the KKS symplectic form $\zw^\cO$ on ${\cO}$ is the symplectic reduction of the coisotropic submanifold $S^{2n-1}\subset\R^{2n}$. In other words, $J_0^*(\zw^\cO)=\zw_0$, where $\zw_0=\zw\,\big|_{S^{2n-1}}$. Of course, $\zw_0=\xd\zvy_0$, where $\zvy_0=\zvy\,\big|_{S^{2n-1}}$, and the characteristic foliation of $\zw_0$ consists of the orbits of the $\U(1)$-action. It is easy to see that the generator $X(x)=ix$ of this action satisfies $i_X\zvy_0=1/2$ and $i_X\xd\zvy_0=i_X\zw_0=0$, so $(S^{2n-1},\zvy_0)$ is a contactification of the coadjoint orbit $(\cO,\zw^\cO)$, with the Reeb vector field $\cR=2X$. This way we get the following.
\begin{theorem}\label{cpn}
The Liouville 1-form $\zvy$ on $\C^n$ (see (\ref{zvy})) restricted to the unit sphere $S^{2n-1}\subset\C^n$ is a contact form $\zvy_0=\zvy\,\big|_{S^{2n-1}}$ and the characteristic foliation of
$$\zw_0=\xd\zvy_0=\zw\,\big|_{S^{2n-1}}$$
consists of orbits of the canonical principal $\U(1)$-action on the sphere. Moreover, $S^{2n-1}/\U(1)=\C\P^{n-1}$ and $(S^{2n-1},\zvy_0)$ is a contactification of $\C\P^{n-1}$ equipped with its KKS-symplectic form $\zw^{\C\P^{n-1}}$, i.e., $\xd\zvy_0=\zp^*\Big(\zw^{\C\P^{n-1}}\Big)$, where
$$\zp:S^{2n-1}\to S^{2n-1}/\U(1)=\C\P^{n-1}$$
is the canonical projection.
\end{theorem}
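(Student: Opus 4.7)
The plan is to assemble the computations already sketched in the paragraphs preceding the theorem and match them to the reduction framework of Sections~2 and~3. The principal actor is the fundamental vector field $X(x)=ix$ of the canonical $\U(1)$-action on $\C^n$, which is tangent to $S^{2n-1}$ and generates its orbits.

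First I would show that $\zvy_0$ is a contact form. A direct calculation in the $(q^k,p_k)$-coordinates gives $\si_X\zvy = \tfrac{1}{2}\|x\|^2$ and $\si_X\zw = -\xd\bigl(\tfrac{1}{2}\|x\|^2\bigr)$, which upon restriction to $S^{2n-1}$ become $\si_X\zvy_0 = \tfrac12$ and $\si_X\xd\zvy_0 = 0$. In particular $\zvy_0$ is nowhere vanishing and $X\notin\ker(\zvy_0)$. Because $S^{2n-1}$ is a (coisotropic) hypersurface in $(\R^{2n},\zw)$, the kernel of $\zw_0 = \zw\big|_{S^{2n-1}}$ is one-dimensional, and the identity $\si_X\zw_0 = 0$ forces this kernel to coincide with $\la X\ran$. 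Hence $\zq(\zvy_0) = \ker(\zvy_0)\cap\ker(\xd\zvy_0) = \{0\}$, so $\zvy_0$ is nondegenerate on a manifold of odd dimension, and the Proposition on nondegenerate 1-forms delivers the contact property. The same relations identify the Reeb vector field as $\cR = 2X$.

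Next I would read off the characteristic foliation of $\zw_0$: it is by construction the foliation by integral curves of $\ker(\zw_0) = \la X\ran$, which are precisely the orbits of the $\U(1)$-action $t\mapsto e^{it}\cdot\Id$ on $S^{2n-1}$. Since this action is free and proper, the quotient is a smooth manifold, and the chain of identifications
$$ S^{2n-1}/\U(1) \;=\; \bigl((\C^n)^\ti/\R_+\bigr)/S^1 \;=\; (\C^n)^\ti/\C^\ti \;=\; \C\P^{n-1} $$
exhibits it as complex projective space. In particular $(S^{2n-1},\zvy_0)$ is regular in the sense of the definition preceding the contact-to-symplectic reduction.

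Finally, to match $\xd\zvy_0$ after reduction with the KKS form $\zw^{\C\P^{n-1}}$, I would exploit the equivariant moment map $J(x) = \zr_x = \kb{x}{x}$. Its restriction $J_0 = J\big|_{S^{2n-1}}$ lands in the pure-state orbit $\cO$, and the computation already carried out before the theorem (using formula (\ref{KS}) together with $\bk{x}{y}=0$ on $\sT_x S^{2n-1}$) yields $J_0^*(\zw^\cO) = \zw_0$. Since $J_0$ and $\zp$ have the same fibres, namely the $\U(1)$-orbits, $J_0$ factors through $\zp$ as a diffeomorphism $S^{2n-1}/\U(1) \cong \cO$, under which $\zw^{\C\P^{n-1}}$ is transported to $\zw^\cO$. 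Pulling back along $\zp$ then gives $\zp^*\bigl(\zw^{\C\P^{n-1}}\bigr) = \zw_0 = \xd\zvy_0$, which is exactly the contactification condition. The only mildly delicate point is keeping the $\U(1)$-orbit picture compatible with the coadjoint-orbit description, but this is handled automatically by equivariance of $J$.
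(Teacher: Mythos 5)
Your proposal is correct and follows essentially the same route as the paper: the theorem there is a summary of the computations immediately preceding it (the moment map $J_0$, the identity $J_0^*(\zw^\cO)=\zw_0$, and the relations $i_X\zvy_0=\tfrac12$, $i_X\zw_0=0$ for $X(x)=ix$), which is exactly what you reassemble. Your only additions — the coisotropic-hypersurface argument pinning down $\ker(\zw_0)=\la X\ran$ and the explicit appeal to the proposition on nondegenerate 1-forms in odd dimension — merely make precise what the paper leaves as "easy to see."
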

\section{Coadjoint orbits \emph{via} Marsden-Weinstein reduction}
In this section we will show that coadjoint orbits of a Lie group $G$ can be obtained from the Marsden-Weinstein-Meyer symplectic reduction of $\sT^*G$. This result is well known (see e.g. \cite[Theorem 6.2.2]{Ortega:2004}, but to make our presentation self-contained (and due to our needs in the sequel) we will sketch a short proof here. All groups we consider are real Lie groups.

Let us assume initially for simplicity that $G$ is connected. Our starting point is the well-known observation that the tangent bundle $\sT G$ is trivializable as a vector bundle. We have, in principle, two canonical trivializations $\zl,\zr:\sT G\to G\ti\g$, associated with the left $L_g(h)=gh$ and the right $R_g(h)=hg$ translations in $G$, respectively. Namely, for $v_g\in\sT_gG$, we have
\be\label{triv}
\zl(v_g)=\big(g,\sT L_{g}^{-1}(v_g)\big)\,,\quad \zr(v_g)=\big(g,\sT R_{g}^{-1}(v_g)\big).
\ee
Here, of course, $\g$ denotes the Lie algebra of $G$. The `coordinates' associated with the \emph{left trivialization} $\zl$ are sometimes called \emph{body coordinates}, those associated with the \emph{right trivialization} $\zr$ are called \emph{space coordinates}. The corresponding left and right trivializations $\bar\zl,\bar\zr:\sT^*G\to G\ti\g^*$ of the cotangent bundle $\sT^*G$ take the form
\be\label{triv*}
\bar\zl(\za_g)=\big(g,(\sT L_{g})^*(\za_g)\big)\,,\quad \bar\zr(\za_g)=\big(g,(\sT R_{g})^*(\za_g)\big).
\ee
The left action of $G$ on itself by the left translations can be lifted canonically to the corresponding actions of $G$ on $\sT G$ and $\sT^*G$,
\beas &\L:G\ti \sT G\to\sT G\,,\ \L(g,v_h)=\L_g(v_h)=\sT L_g(v_h)\,\\
&\hL:G\ti \sT^*G\to\sT^*G\,,\ \hL(g,\za_h)=\hL_g(\za_h)={(\sT L_{g^{-1}})}^*(\za_h).
\eeas
In the space coordinates these actions look like
$$
\L_g(h,v)=(gh,\Ad_g(v))\,,\quad \hL_g(h,\zm)=(gh,\Ad^*_g(\zm)),
$$
where we denoted $\Ad^*_g={(\Ad_{g^{-1}})}^*$, so $\Ad^*$ is a representation of $G$ called the {coadjoint representation}. In the body coordinates it reads
$$\L_g(h,v)=(gh,v)\,,\quad \hL_g(h,\zm)=(gh,\zm),
$$
so the $G$ action is free and proper.
In what follows, we will work exclusively with the space coordinates. A useful reference could be
\cite[Ch. 4.5]{Abraham:1978} (see also \cite[Ch. 6.2]{Ortega:2004}), but we should stress that, due to differences of conventions, some signs in \cite[Ch. 4.5]{Abraham:1978} are different. Of course, all this is a question of convention which form we consider as the canonical symplectic form on $\cO$, similarly to the choice of convention for the sign in the definition of the Hamiltonian vector field, as well as deciding what is the canonical symplectic form $\zw$ on $\sT^*G$; e.g., Abraham-Marsden \cite{Abraham:1978}) take $\zw=-\xd\zvy$.
For us, in the space coordinates the Liouville 1-form $\zvy$ and the canonical symplectic form $\zw=\xd\zvy$ on $\sT^*G$ read simply
\bea\label{Liou}
&\zvy_{(g,\zm)}(v,\zs)=\bk{\zm}{v}\,,\\
&\zw_{(g,\zm)}\big((v,\zs),(v',\zs')\big)
=\bk{\zs}{v'}-\bk{\zs'}{v}+\bk{\zm}{[v,v']}\,,\label{csymp}
\eea
where
$$ (v,\zs),(v',\zs')\in\g\ti\g^*\overset{\zr}{=}
\sT_gG\ti\g^*=\sT_{(g,\zm)}(G\ti\g^*)\overset{\bar\zr}{=}\sT_{(g,\zm)}\sT^*G.
$$
First, let us notice that
$$\sT\hL_h\big((g,\zm),(v,\zs)\big)=\big((hg,\Ad^*_h(\zm)),(\Ad_h(v),\Ad^*_h(\zs)\big),$$
that implies that $\zvy$ is $G$-invariant. Indeed,
\beas &\hL^*_h(\zvy)_{(g,\zm)}(v,\zs)=\zvy_{\hL_h(g,\zm)}(\sT_{(g,\zm)}(v,\zs))=
\zvy_{(hg,\Ad^*_h(\zm))}(\Ad_h(v),\Ad^*_h(\zs))\\
&=\bk{\Ad^*_h(\zm)}{\Ad_h(v)}=\bk{\zm}{\Ad_h^{-1}\Ad_h(v)}=\bk{\zm}{v}=\zvy_{(g,\zm)}(v,\zs).
\eeas
Since $\zw=\xd\zvy$, the $G$-action on $\sT^*G$ is Hamiltonian, the canonical Hamiltonian associated with $v\in\g$ is $H_v(g,\zm)=\bk{\zm}{v}$, and the canonical equivariant moment map in the space coordinates reads simply
$$J:\sT^*G\to\g^*\,,\quad J(g,\zm)=\zm,$$
while in the body coordinates it takes the form $J(g,\zm)=\Ad^*_g\zm$.
It is therefore clear that every $\zm\in\g^*$ is a regular value of the moment map, so that we can use the standard Marsden-Weinstein-Meyer symplectic reduction\cite{Marsden:1974,Meyer:1973} for every $\zm\in\g^*$.

\medskip According to the Marsden-Weinstein-Meyer theorem, on the submanifold
$$N_\zm=J^{-1}(\zm)=G\ti\{\zm\}\simeq G$$
of the cotangent bundle $\sT^*G=G\ti\g^*$, the restriction of $\zw$,
$$\hat\zw=\zw\,\big|_{N_\zm}=i^*_{N_\zm}(\zw),$$
where $i_{N_\zm}:N_\zm\hookrightarrow\sT^*G$ is the canonical injection, is a presymplectic form, invariant with respect to the restricted action of the isotropy subgroup $G_\zm=\{g\in G:\, Ad^*_g(\zm)=\zm\}$, which clearly acts freely and properly on $N_\zm\simeq G$. According to the Marsden-Weinstein-Meyer theorem, the quotient manifold
$$P_\zm=N_\zm/G_\zm\simeq G/G_\zm$$
is a symplectic manifold with the symplectic form $\zw^\zm$ being the reduction of $\hat\zw$.
The canonical submersion
$$p_\zm:N_\zm=G\ti\{\zm\}\to P_\zm\,,\quad p_\zm(g)=[g]=gG_\zm$$
induces the projection $\sT p_\zm:\sT N_\zm\to\sT P_\zm$,
$$\sT p_\zm:\sT N_\zm\ni\big((g,\zm),(v,0)\big)\mapsto ([g],[v])\in\big((G/G_\zm)\ti(\g/\g_\zm)\big)=\sT P_\zm,$$
and the symplectic form $\zw^\zm$ on $P_\zm$ is uniquely determined by the equation $p_\zm^*(\zw^\zm)=\hat\zw$.
Since vectors tangent to $N_\zm$ at $(g,\zm)$ are of the form $(v,0)\in\g\ti\g^*$, the restriction $\hat\zw=\zw_{N_\zm}$ of $\zw$ to the submanifold $N_\zm$ takes in the space coordinates the form (cf. (\ref{csymp}))
$$
\hat\zw_{(g,\zm)}\big((v,0),(v',0)\big)=\bk{\zm}{[v,v']}=-\bk{\ad^*_v(\zm)}{v'}.
$$
Of course, the right-hand-side vanishes for all $v'\in\g$ exactly when $v$ belongs to
$$\g_\zm=\{v\in\g\,|\, \ad^*_v=0\},$$
the Lie algebra of $G_\zm$. Consequently, the reduced symplectic form reads
$$ \zw^\zm_{[g]}([v],[v'])=\bk{\zm}{[v,v']}.$$
There is a canonical diffeomorphism
$$A_\zm:G/G_\zm\to \cO, \quad A_\zm([g])=\Ad^*_g(\zm)$$
of $G/G_\zm$ onto the coadjoint orbit $\cO\subset\g^*$ of $\zm$. It is easy to see that $A_\zm$ is well defined and we will show that this diffeomorphism identifies $\zw^\zm$ with the KKS-symplectic structure $\zw^\cO$ on the orbit. The tangent map $\sT A_\zm:\sT P_\zm\to\sT\cO$ in the space coordinates takes the form
$$\sT A_\zm([g],[v])=\Big(\Ad^*_{g}(\zm),\Ad^*_{g}\big(\ad^*_v(\zm)\big)\Big).$$
But the Lie bracket on $\g$ is $\Ad_g$-invariant, so
$$\Ad^*_{g}\circ\,\ad^*_v=\ad^*_{\Ad_g(v)}\circ\Ad^*_g,$$
and therefore
$$\sT A_\zm([g],[v])=\Big(\Ad^*_g(\zm),\ad^*_{\Ad_g(v)}\big(\Ad^*_g(\zm)\big)\Big).$$
The pull-back of the canonical symplectic form $\zw^\cO$ on the orbit $\cO$ is therefore (cf. (\ref{KKSf}))
\beas &[A_\zm^*(\zw^\cO)]_{[g]}([v],[v'])=\zw^\cO_{\Ad^*_g(\zm)}\Big(\ad^*_{\Ad_g(v)}\big(\Ad^*_g(\zm)\big),
\ad^*_{\Ad_g(v')}\big(\Ad^*_g(\zm)\big)\Big)\\
&=\Bk{\Ad^*_{g}(\zm)}{[\Ad_g(v),\Ad_g(v')]}=\Bk{\Ad^*_{g}(\zm)}{\Ad_g([v,v'])}\\
&=\bk{\zm}{[v,v']}=\zw^\zm_{[g]}([v],[v']).
\eeas
We get therefore the following.
\begin{theorem} The diffeomorphism $A_\zm:P_\zm\to\cO$ is a symplectomorphism of the symplectic manifold $(P_\zm,\zw^\zm)$ onto $(\cO,\zw^\cO)$.
\end{theorem}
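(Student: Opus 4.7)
The plan is to verify that $A_\zm$ is well defined as a diffeomorphism and then show the equality $A_\zm^*(\zw^\cO)=\zw^\zm$ by a direct pointwise computation in the space coordinates, leveraging exactly the intertwining of $\Ad^*$ and $\ad^*$ that was highlighted just before the statement.

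First, I would confirm that $A_\zm([g])=\Ad^*_g(\zm)$ is insensitive to the representative $g$ of the class $[g]\in G/G_\zm$: for any $h\in G_\zm$ we have $\Ad^*_{gh}(\zm)=\Ad^*_g\big(\Ad^*_h(\zm)\big)=\Ad^*_g(\zm)$ by definition of $G_\zm$. Surjectivity onto the coadjoint orbit $\cO$ is tautological, injectivity follows since $\Ad^*_{g_1}(\zm)=\Ad^*_{g_2}(\zm)$ forces $g_1^{-1}g_2\in G_\zm$, and smoothness in both directions is standard once $G/G_\zm$ is equipped with its quotient manifold structure. Hence $A_\zm$ is a diffeomorphism and the only thing left is to identify the two symplectic structures.

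The heart of the argument is the identity $A_\zm^*(\zw^\cO)=\zw^\zm$, which follows from the computation already sketched in the text. In space coordinates the tangent map is
$$\sT A_\zm([g],[v])=\Big(\Ad^*_g(\zm),\,\Ad^*_g\big(\ad^*_v(\zm)\big)\Big),$$
and the $\Ad_g$-equivariance of the bracket on $\g$ gives the intertwining $\Ad^*_g\circ\ad^*_v=\ad^*_{\Ad_g(v)}\circ\Ad^*_g$, so
$$\sT A_\zm([g],[v])=\Big(\Ad^*_g(\zm),\,\ad^*_{\Ad_g(v)}\big(\Ad^*_g(\zm)\big)\Big).$$
Feeding this into the KKS formula (\ref{KKSf}) and using once more $\Ad_g[v,v']=[\Ad_g(v),\Ad_g(v')]$ together with the duality $\bk{\Ad^*_g(\zm)}{\Ad_g(\cdot)}=\bk{\zm}{\cdot}$, one obtains
$$[A_\zm^*(\zw^\cO)]_{[g]}([v],[v'])=\Bk{\Ad^*_g(\zm)}{[\Ad_g(v),\Ad_g(v')]}=\bk{\zm}{[v,v']}=\zw^\zm_{[g]}([v],[v']),$$
the last equality being the reduced-form formula derived by the Marsden--Weinstein--Meyer procedure.

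The only delicate book-keeping, which I expect to be the main (albeit minor) obstacle, is checking that both sides of the pullback identity genuinely descend to the quotient $\g/\g_\zm$, i.e., that they vanish when $v\in\g_\zm$. This is automatic once one notes that $v\in\g_\zm$ iff $\ad^*_v(\zm)=0$, so $\sT A_\zm$ kills precisely $\g_\zm$, while the dual identity $\bk{\zm}{[v,v']}=-\bk{\ad^*_v(\zm)}{v'}$ shows $\zw^\zm$ itself is well defined on $\g/\g_\zm$. With these compatibilities in place the equality of forms holds unambiguously, and $A_\zm$ is the desired symplectomorphism.
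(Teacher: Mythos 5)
Your argument is correct and follows essentially the same route as the paper: computing $\sT A_\zm$ in space coordinates, using the $\Ad_g$-equivariance identity $\Ad^*_g\circ\ad^*_v=\ad^*_{\Ad_g(v)}\circ\Ad^*_g$, and plugging into the KKS formula to match $\zw^\zm_{[g]}([v],[v'])=\bk{\zm}{[v,v']}$. Your extra check that both forms descend to $\g/\g_\zm$ is a sensible bit of book-keeping that the paper leaves implicit.
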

\begin{remark}
It is easy to see that the coadjoint orbits of $G$ are canonically symplectomorphic with the coadjoint orbits
of the Lie group $G/H$, where $H$ is a closed Lie subgroup of the center $Z(G)$ of $G$. In particular, coadjoint orbits of $\mathrm{U}(n)$ can be identified with coadjoint orbits of $\mathrm{SU}(n)$, and coadjoint orbits of $$\mathrm{SU}(n)/Z\big(\mathrm{SU}(n))\big)=\mathrm{SU}(n)/\Z_n.$$
\end{remark}
\section{Contactification of coadjoint orbits}
Let us consider again the submanifold $N_\zm=J^{-1}(\zm)$ in $\sT^*G$, which in the right trivialization $\sT^*G\simeq G\ti\g^*$ reads $N_\zm=G\ti\{\zm\}$, so we will often identify $N_\zm$ with $G$. In view of (\ref{Liou}), the restriction $\hat\zvy=\zvy\,\big|_{N_\zm}$ of the Liouville 1-form to $N_\zm$ in the space coordinates looks like
$$\hat\zvy_{g}(v,0)=\bk{\zm}{v},$$
so $\ker(\hat\zvy)_g\simeq\ker(\zm)$ for every $g\in G$.
As we already know, the kernel of $\xd\hat\zvy=\hat\zw$ is $\g_\zm$ at every point, so
\be\label{kernel}\zq(\hat\zvy)=\ker(\hat\zvy)\cap\ker(\xd\hat\zvy)=G\ti\g_\zm^0,\ee
where
$$\g_\zm^0=\{v\in\ker(\zm):\,\ad^*_v(\zm)=0\}=\ker(\zm)\cap\g_\zm.$$
\begin{proposition}\label{p2a}
The subspace $\g_\zm^0$ is a 1-codimensional ideal in the Lie algebra $\g_\zm$, which contains the derived ideal, $[\g_\zm,\g_\zm]\subset\g_\zm^0$.
\end{proposition}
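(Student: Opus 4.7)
My plan is to split the proposition into two pieces---the commutator inclusion $[\g_\zm,\g_\zm]\subset\g_\zm^0$ and the codimension-one claim---and handle them in that order, with the ideal property falling out for free from the first piece.

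For the commutator inclusion I use two facts. First, $\g_\zm=\Lie(G_\zm)$ is a Lie subalgebra of $\g$, so it is closed under brackets. Second, the defining condition $v\in\g_\zm$ (that is, $\ad^*_v(\zm)=0$) is equivalent to $\bk{\zm}{[v,w]}=0$ for every $w\in\g$. For $u,v\in\g_\zm$, the first fact gives $[u,v]\in\g_\zm$; applying the second with $w=v$ gives $\bk{\zm}{[u,v]}=0$, so $[u,v]\in\ker(\zm)$. Intersecting, $[u,v]\in\g_\zm\cap\ker(\zm)=\g_\zm^0$, proving $[\g_\zm,\g_\zm]\subset\g_\zm^0$. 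Since any linear subspace of a Lie algebra that contains the derived subalgebra is automatically an ideal (the quotient being abelian), $\g_\zm^0$ is an ideal in $\g_\zm$.

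For the codimension assertion I would note that $\g_\zm^0$ is exactly the kernel of the single linear functional $\zm\big|_{\g_\zm}\colon\g_\zm\to\R$, so its codimension in $\g_\zm$ is zero or one, with one occurring precisely when $\zm\big|_{\g_\zm}\ne 0$. To rule out the degenerate case I would appeal to the $\Ad$-invariant positive-definite pairing~(\ref{metric1}) introduced earlier in the paper: under the induced isomorphism $\g\simeq\g^*$ the nonzero $\zm$ corresponds to an element $T_\zm\in\g$ which commutes with itself and therefore lies in $\g_\zm$, and which pairs with $\zm$ as $\bk{\zm}{T_\zm}=\norm{T_\zm}^2>0$. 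For $\g=\u(n)$ this is explicitly $T_\zm=-i\zm$, yielding $\bk{\zm}{-i\zm}=\half\tr(\zm^2)>0$. Hence $\zm\big|_{\g_\zm}\ne 0$ and the codimension is exactly one.

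The commutator step is essentially formal; the substantive part is the codimension assertion, which is genuinely false without some such invariant structure (one can cook up three-dimensional solvable Lie algebras with a nonzero $\zm$ for which $\zm\big|_{\g_\zm}\equiv 0$). The main obstacle in the argument is therefore to rule out $\zm\big|_{\g_\zm}=0$, and I expect the proof to exploit precisely the compact/invariant-pairing structure already installed in the earlier sections.
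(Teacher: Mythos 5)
Your treatment of the inclusion $[\g_\zm,\g_\zm]\subset\g_\zm^0$ (and the resulting ideal property) is correct and coincides with the paper's one-line computation $\bk{\zm}{[\g_\zm,\g_\zm]}=\bk{\ad^*_{\g_\zm}(\zm)}{\g_\zm}=0$. For the codimension-one claim the paper goes a genuinely different, metric-free way: it restricts the canonical symplectic form of $\sT^*G$ to the submanifold $\bar N_\zm=G\ti\{s\zm:\,s>0\}$, asserts that the kernel of the restricted $2$-form $\bar\zw$ is $\g_\zm^0\ti\{0\}$, and then exploits the parity of the rank of a $2$-form: $\dim(\g)+1-\dim(\g^0_\zm)=\dim(P_\zm)+\dim(\g_\zm)+1-\dim(\g^0_\zm)$ must be even, and $\dim(P_\zm)$ is even, so $\g^0_\zm$ has odd, hence exactly one, codimension in $\g_\zm$. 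Your route via an $\Ad$-invariant positive-definite pairing and the element $T_\zm$ is sound wherever such a pairing exists, but that is only guaranteed for compact groups (treated in the following section), while Proposition \ref{p2a} is stated for an arbitrary connected $G$; so, as written, your proof establishes less than the stated claim.

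That said, your instinct that the codimension assertion cannot be purely formal is exactly right, and in fact the paper's own argument is not immune: the identification $\ker(\bar\zw)=\g^0_\zm\ti\{0\}$ tacitly assumes that no $v$ with $\bk{\zm}{v}=0$ satisfies $\ad^*_v(\zm)=c\zm$ with $c\ne 0$ (otherwise $(v,sc\,\zm)$ lies in that kernel). For compact groups this is automatic, since $\ad^*_v(\zm)$ is orthogonal to $\zm$ for the invariant metric --- the same mechanism you invoke --- but it fails in general: for the affine group of the line, with $[X,Y]=Y$ and $\zm=Y^*$, one gets $\ad^*_X(\zm)=-\zm$ and $\g_\zm=\{0\}=\g^0_\zm$, so the codimension is $0$ and the proposition itself fails. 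So your proposed counterexample class is real; the honest formulation needs either compactness (or an invariant inner product), or the hypothesis that $\R\zm$ meets the tangent space of the coadjoint orbit at $\zm$ only in $0$. Within that setting your argument is a clean and correct alternative to the paper's parity argument.
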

\begin{proof}
We have
$$\bk{\zm}{[\g_\zm,\g_\zm]}=\bk{\ad^*_{\g_\zm}(\zm)}{\g_\zm}=0,$$
so $[\g_\zm,\g_\zm]\subset\g_\zm^0$.
To show that $\g^0_\zm\ne\g_\zm$, consider the submanifold
$\bar N_\zm=G\ti[\zm]_+$ of $\sT^*G\simeq G\ti\g^*$, where $[\zm]_+=\{s\zm:\,s>0\}$.
The restriction $\bar\zw=i^*_{\bar N_\zm}(\zw)$ of the canonical symplectic form $\zw$ on $\sT^*G$ to the submanifold $\bar N_\zm$ looks like (see \ref{csymp})
$$\bar\zw_{(g,s\zm)}\big((v,a\zm),(v',a'\zm)\big)
=a\bk{\zm}{v'}-a'\bk{\zm}{v}+s\bk{\zm}{[v,v']},$$
and it is easy to see that $\ker(\bar\zw_{(g,s\zm)})=\g_\zm^0\ti\{0\}$.
Since the co-rank of the distribution $\ker(\bar\zw)$ in $\sT\bar N_\zm$, i.e.,
$$\dim(\g)+1-\dim(\g_\zm^0)=\dim(P_\zm)+\dim(\g_\zm)+1-\dim(\g_\zm^0)$$
is even and $\dim(P_\zm)$ is even, $\g_\zm^0$ must be of codimension 1 in $\g_\zm$.

\end{proof}
\no Denote with $G_\zm^0$ the connected normal Lie subgroup in $G_\zm$ whose Lie algebra is $\g_\zm^0$.
It follows from (\ref{kernel}) that the characteristic foliation of $\hat\zvy$ consists of orbits of the subgroup
$G^0_\zm$. Under additional assumption that the subgroup $G^0_\zm$ is closed in $G_\zm$ (equivalently, in $G$), the characteristic foliation is simple and, according to Theorem \ref{dfr}, we can do the reduction of the one form $\hat\zvy$ by the characteristic foliation and obtain a nondegenerate 1-form $\zh$ on the reduced manifold
\be\label{M}M=N_\zm/G^0_\zm\simeq G/G^0_\zm.\ee
Since the dimension of $M$ is odd (see Proposition \ref{p2a}), the form $\zh$ is a contact form (cf. Corollary \ref{cc1})) and reads
\be\label{zh}\zh_{[g]}([v])=\bk{\zm}{v}.\ee
Let $\cG=G_\zm/G^0_\zm$ be the quotient Lie group. It is commutative and 1-dimensional. The $G_\zm$-action on $N_\zm$ induces a $\cG$-action on $M=N_\zm/G^0_\zm$. As $\hat\zvy$ is $G_\zm$ invariant, the contact form $\zh$ is $\cG$-invariant and the orbits of $\cG$ consist of the leaves of the kernel of $\xd\zh$, the latter being the reduction of $\xd\hat\zvy=\hat\zw$. In particular, $M/\cG\simeq\cO$. Let $G^e_\zm$ be the connected component of $G_\zm$ containing the identity element, so the connected 1-dimensional Lie group $\cG^e=G^e_\zm/G^0_\zm$ is either $S^1$ or $\R$. If $\mathsf v$ spans the Lie algebra $\mathsf g$ of $\cG^e$ (thus $\cG$), then the 1-parameter subgroup $t\mapsto\exp(t\sv)$ of $\cG$ acts on $M$ as a 1-parameter group of diffeomorphisms with a generator $\hat\sv$ lying in the kernel of $\xd\zh$ and such that $\Ll_{\hat\sv}\zh=0$. It follows that $\hat\sv$ is a multiple of the Reeb vector field $\cR$ of $\zh$, $\hat\sv=c\cR$,  $c\in\R$. We can therefore assume that $\sv$ is chosen in such a way that $\hat\sv=\cR$. As the $\cG^e$-action on $M$ is free and proper, the contact manifold $(M,\zh)$ is principal, so we have a the $\cG^e$-principal bundle $p:M\to M/\cG^e$ which defines a contact-to symplectic reduction of $(M,\zh)$ onto $(M_{red},\zw_{red})$, where $M_{red}=M/\cG^e$ and $p^*(\zw_{red})=\xd\zh$. Note, however, that $M_{red}$ is the coadjoint orbit $\cO$ of $\zm$ if and only if $\cG$ is connected, $\cG=\cG^e$. In general, $(M_{red},\zw_{red})$ is a cover of $(\cO,\zw^\cO)$, being a principal bundle over $\cO$ with the discrete group $\cG/\cG^e$ as the structure group. We can sum up our observations as follows.
\begin{theorem}\label{main0} If the connected normal Lie subgroup $G^0_\zm$ of $G_\zm$ is a closed subgroup in $G_\zm$, then $(M,\zh)$, where $M=(G/G^0_\zm)\ti\{\zm\}$ and $\zh$ is the reduction by $G^0_\zm$ of the restriction $\hat\zvy$ of the Liouville 1-form $\zvy$ on $\sT^*G$ to the submanifold $G\ti\{\zm\}$ (in the right trivialization), is a principal contact manifold with the canonical action of the connected 1-dimensional Lie group $\cG^e=G^e_\zm/G^0_\zm$.
In particular, $(M,\zh)$ is a contactification of the symplectic manifold $(M/\cG^e,\xd\zh/\cG^e)$, which can be canonically identified with  a regular cover of the symplectic coadjoint orbit $(\cO,\zw^\cO)$ of $\zm\in\g^*$ -- a principal bundle with respect to the discrete group $\cG/\cG^e=G_\zm/G^e_\zm$. If $G_\zm$ is connected, then $(M,\zh)$ is a contactification of the symplectic coadjoint orbit $(\cO,\zw^\cO)$.
\end{theorem}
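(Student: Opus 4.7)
The plan is to assemble the conclusion from pieces already established in the previous section and then reduce a principal action onto the contact quotient.

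First I would verify that the differential-form reduction of Theorem \ref{dfr} applies to $\hat\zvy$ on $N_\zm\simeq G$. By hypothesis $G^0_\zm$ is closed in $G_\zm$, hence closed in $G$, so its action on $G$ (by right translation, which preserves $\hat\zvy$ and coincides with the action whose orbits integrate $\g^0_\zm$) is free and proper. Consequently the characteristic distribution $\zq(\hat\zvy)=G\ti\g^0_\zm$ identified in (\ref{kernel}) is simple, the quotient $M=G/G^0_\zm$ is a smooth manifold, and there is a unique 1-form $\zh$ on $M$ with $p^*(\zh)=\hat\zvy$, where $p:N_\zm\to M$ is the canonical projection. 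The explicit formula $\zh_{[g]}([v])=\bk{\zm}{v}$ then follows immediately from the definition of pull-back and of $\hat\zvy$. A direct dimension count using Proposition \ref{p2a} gives $\dim M=\dim G-\dim\g_\zm+1=\dim\cO+1$, which is odd, so Corollary \ref{cc1} guarantees that $\zh$ is a contact form.

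Next I would identify the Reeb flow with a group action. Because $\g^0_\zm$ is an ideal in $\g_\zm$ (Proposition \ref{p2a}), $G^0_\zm$ is normal in $G_\zm$, so the quotient $\cG=G_\zm/G^0_\zm$ is a Lie group and its identity component $\cG^e=G^e_\zm/G^0_\zm$ is a $1$-dimensional connected abelian Lie group, hence $S^1$ or $\R$. The $G_\zm$-action on $N_\zm$ (preserving $\hat\zvy$, since the whole $G$-action does) descends to a $\cG$-action on $M$ preserving $\zh$. Choosing a generator $\sv$ of the Lie algebra of $\cG^e$, the induced vector field $\hat\sv$ on $M$ satisfies $\Ll_{\hat\sv}\zh=0$; and since $\hat\sv$ is tangent to the fibres of the projection $M\to M/\cG^e$ whose total distribution reduces $\ker(\xd\hat\zvy)=G\ti\g_\zm$ (so after quotient sits inside $\ker(\xd\zh)$), we get $\si_{\hat\sv}\xd\zh=0$. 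Cartan's formula then forces $\xd\,\si_{\hat\sv}\zh=0$, so $\si_{\hat\sv}\zh$ is locally constant; if this constant were zero at some point, nondegeneracy of $\zh$ would give $\hat\sv=0$, impossible as $\cG^e$ acts nontrivially because $\g^0_\zm\subsetneq\g^e_\zm=\g_\zm$. Rescaling $\sv$ we may take $\hat\sv=\cR$.

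Freeness and properness of the $\cG^e$-action on $M$ follow from the same properties of the $G^e_\zm$-action on $N_\zm\simeq G$ together with $G^0_\zm$ being closed and normal in $G^e_\zm$. Therefore $(M,\zh)$ is a principal contact manifold in the sense of the definition, hence by the contact-to-symplectic reduction its $\cG^e$-quotient carries a symplectic form $\zw_{red}$ uniquely determined by $p^*(\zw_{red})=\xd\zh$. Finally I would identify $M/\cG^e$ with a cover of $\cO$: the chain of quotients gives
\[
M/\cG^e=(G/G^0_\zm)/(G^e_\zm/G^0_\zm)=G/G^e_\zm,
\]
and the natural projection $G/G^e_\zm\to G/G_\zm=\cO$ is a principal bundle with the discrete structure group $G_\zm/G^e_\zm=\cG/\cG^e$. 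Chasing the defining equations through the previous section's identification $G/G_\zm\cong\cO$ shows that the pull-back of $\zw^\cO$ to $G/G^e_\zm$ is exactly $\zw_{red}$, so when $G_\zm$ is connected we land on the orbit itself.

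The main obstacle I anticipate is the clean verification that $\hat\sv$ is a nonzero multiple of the Reeb field, i.e.\ ruling out the degenerate case $\si_{\hat\sv}\zh\equiv 0$; everything else is a bookkeeping exercise in passing quotients through the Marsden-Weinstein construction, but this step is where the codimension-one statement of Proposition \ref{p2a} and the nondegeneracy of $\zh$ must be combined carefully.
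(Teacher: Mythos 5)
Your proposal is correct and follows essentially the same route as the paper: reduce $\hat\zvy$ by its characteristic foliation $G\ti\g^0_\zm$ (simple because $G^0_\zm$ is closed), invoke Proposition \ref{p2a} and Corollary \ref{cc1} to see that the reduced form on the odd-dimensional quotient is contact, descend the $G_\zm$-action to a $\cG$-action identifying the fundamental vector field of $\cG^e$ with the Reeb field, and recognize $M/\cG^e=G/G^e_\zm$ as a principal cover of $\cO$ with discrete structure group $\cG/\cG^e$. Your Cartan-formula argument that $\si_{\hat\sv}\zh$ is a nonzero constant (using freeness of the $\cG^e$-action and nondegeneracy of $\zh$) is a slightly more explicit justification of the step the paper merely asserts as ``it follows that $\hat\sv=c\cR$,'' and is exactly the right way to close it.
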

\no Note that, since the contact manifold $(M,\zh)$ turned out to be regular and complete, the above result fully agrees with \cite[Theorem 5.1]{Grabowska:2023}.

\section{The case of a compact Lie group}\label{A}
Let us suppose now that $G$ is compact and connected. It is well known (see, e.g., \cite[Theorem 5.18]{Sepanski:2007}) that in this case the Lie algebra $\g$ is reductive, $\g=\g'\op\mathfrak{z}$, where the derived ideal $\g'=[\g,\g]$ is semisimple and $\mathfrak{z}$ is the center of $\g$. The commutator subgroup $G'=(G,G)$ is a semisimple closed (thus compact) normal subgroup of $G$ with the Lie algebra $\g'$ {\cite[Theorem 5.21]{Sepanski:2007}}, and  $G=G'\ti\T$, where $\T$ is a torus being the identity component of the center $Z(G)$ of $G$ \cite[Theorem 5.22]{Sepanski:2007}.  Moreover, in this case the exponential map $\exp:\g\to G$ is surjective \cite[Theorem 5.12]{Sepanski:2007}, isotropy subgroups $G_\zm$ are connected, and the coadjoint orbits are simply connected \cite{Borel:1954,Filippini:1995} and compact. The canonical KKS symplectic form is in this case a part of a canonical K\"ahler structure \cite{Borel:1954}. Actually, coadjoint orbits are in this case \emph{flag manifolds} understood as certain homogeneous manifolds, as studied already by Ehresmann \cite{Ehresmann:1934}. Note also that there is the canonical identification of adjoint and coadjoint orbits associated with the Killing form, which is nondegenerate in this case. Since the center of $G$ acts trivially on $\g^*$, we can reduce ourselves to the case when $G$ (thus $\g$) is is semisimple and compact (so $Z(G)$ is a finite Abelian subgroup).

\m We will now study the following question: when the Lie subgroup $G_\zm^0$ in $G_\zm$ is closed? Since $G_\zm$ is connected and closed in $G$, thus compact, this reduces to the question whether $G^0_\zm$ is closed in the compact and connected Lie group $G_\zm$. As $G_\zm^0$ is 1-codimensional in $G_\zm$, this is equivalent to the question whether $G_\zm/G_\zm^0\simeq S^1$ is a circle. The covector $\zm$ we can restrict to $\g_\zm$ and view as an element of $\g_\zm^*$.

In general terms, we can start with an arbitrary compact connected Lie group $H$, an element $\zm\in\h^*$ such that $\h^0=\ker(\zm)$ contains $\h'=[\h,\h]$, and ask under what conditions for $\zm$ the quotient group
$H/H_0$, where $H^0=\exp(\h^0)$, is the circle $S^1$ interpreted as the multiplicative group of complex numbers of modulus 1, $z=e^{it}$, $t\in\R$. The Lie algebra of $S^1$ is therefore $\R$ with the trivial Lie bracket and with the exponential map
$$\exp:\R\to S^1\subset\C,\quad \exp(t)=e^{it}.$$
The kernel of this exponential map, $\exp^{-1}(1)$, is the additive subgroup $2\pi\Z$ of $\R$.
Of course, the subgroup $H^0=\exp(V)$ contains the commutator subgroup $H'=(H,H)$ which is a closed normal subgroup (see e.g. \cite[Theorem 5.25]{Sepanski:2007}).
Consider the canonical projection
$$\zF:H\to H/H^0$$
which is a group homomorphism ($\ker(\zm)$ is a Lie ideal in $\h$). Of course, $H^0$ is closed in $H$ if and only if $H/H^0\simeq S^1$. In the latter case the Lie group homomorphism $\zF$ corresponds to a unique Lie algebroid homomorphism
$$\zf:\h\to\R\simeq\h/\h^0=\Lie(S^1)$$
which can be viewed as an element of $\h^*$, so for us $\zf\in\h^*$.
Continuous group morphisms into $S^1$, the latter viewed as the group of complex numbers of modulus 1, we call \emph{characters}, so $\zf\in\h^*$ is a generator of the nontrivial character $\zF$, i.e.,
\be\label{char}\zF(\exp(v))=e^{i\langle \zf,v\rangle}\ee
for any $v\in\h$. In particular, $\zf$ takes values in $2\pi\Z$ on the kernel $\exp^{-1}(e)$ of the exponential map $\exp:\h\to H$,
where $e$ is the unit of $H$. Since $\ker(\zf)$ contains $V$, we have $\ker{\zf}=\ker(\zm)$, so there is $\hbar>0$ such that $\zm=\hbar\zf$.

Conversely, if there exists $\hbar>0$ such that $\zm/\hbar=\zf$ takes values in $2\pi\Z$ on $\exp^{-1}(e)$ and $\ker(\zf)=V$ contains $\h'$, then the map
$$\h\ni v\mapsto e^{i\langle \zf,v\rangle}$$
takes values in $S^1=\U(1)\subset\C$ and (\ref{char}) defines properly a character $\zF$ on $H$.
The condition $(\zm/\hbar)(v)\in 2\pi\Z$ is equivalent to $\zm(v)\in 2\pi\hbar\,\Z$. The additive subgroup $2\pi\hbar\,\Z$ of $\R$ is sometimes denoted $\Z_\hbar$ and we will use this notation in the sequel.
For $H=G_\zm$ we get the following.
\begin{theorem}\label{main}
Let $G$ be a compact and connected Lie group, and let $\zm\in\g^*$. The normal subgroup $G_\zm^0$ in $G_\zm$ corresponding to the Lie ideal $\g_\zm^0=\g_\zm\cap\ker(\zm)$ in $\g_\zm$ is closed if and only if there exists $\hbar> 0$ such that $\zm$ takes $\Z_\hbar$-values at points of $\exp^{-1}(e)$.
\end{theorem}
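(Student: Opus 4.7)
The plan is to formalize the abstract construction outlined in the paragraphs preceding the statement, working throughout with $H = G_\mu$. As a first step I would observe that $G_\mu$ is compact (being closed in the compact group $G$) and connected (by the classical result for stabilisers of the coadjoint action of a compact connected Lie group, cited earlier in the paper). Setting $\h = \Lie(G_\mu)$ and $\h^0 = \g_\mu^0$, Proposition \ref{p2a} tells us that $\h^0$ is a codimension-one ideal of $\h$ containing $[\h,\h]$, and that $H^0 = G_\mu^0$ is the connected immersed normal subgroup it integrates. Since $H$ is compact and connected, $H^0$ is closed in $H$ if and only if $H/H^0$ inherits a Lie group structure as a one-dimensional compact connected Lie group, equivalently $H/H^0 \cong S^1$; this reformulation turns the theorem into a characterisation of when the canonical quotient $\Phi\colon H \to H/H^0$ is a genuine Lie group morphism to the circle.

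For the forward direction, I would assume $H^0$ is closed and differentiate $\Phi\colon H \to S^1$ at the identity to obtain a Lie algebra morphism $\phi = d\Phi \colon \h \to \R = \Lie(S^1)$ whose kernel is exactly $\h^0 = \ker(\mu|_\h)$. Two linear functionals on $\h$ with the same kernel are proportional, so $\mu = \hbar\,\phi$ for some $\hbar \in \R^\times$; replacing $\Phi$ by its composition with the inversion of $S^1$ if needed, we may take $\hbar > 0$. Naturality of the exponential then gives $\Phi(\exp v) = e^{i\phi(v)} = e^{i\mu(v)/\hbar}$, and for $v \in \exp^{-1}(e)$ we read off $\mu(v) \in 2\pi\hbar\,\Z = \Z_\hbar$, which is the required condition.

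For the converse, I would set $\phi = \mu/\hbar$ and use that $\phi$ vanishes on $[\h,\h]$ (again by Proposition \ref{p2a}), so $\phi$ is a Lie algebra homomorphism into the abelian Lie algebra $\R$. Since the universal cover $\tilde H$ is simply connected, $\phi$ integrates uniquely to a Lie group morphism $\tilde\Phi\colon \tilde H \to \R$ satisfying $\tilde\Phi \circ \exp_{\tilde H} = \phi$, and composing with $t \mapsto e^{it}$ gives $\bar\Phi\colon \tilde H \to S^1$. The main obstacle, and the only nontrivial step, is to show that $\bar\Phi$ descends through the covering $q\colon \tilde H \to H$ to a morphism $\Phi\colon H \to S^1$, i.e.\ that $\bar\Phi$ is trivial on $\pi_1(H) = \ker q$. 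Here I would exploit the identity $\exp_H^{-1}(e) = \exp_{\tilde H}^{-1}(\pi_1(H))$ (immediate from $\exp_H = q \circ \exp_{\tilde H}$) together with surjectivity of $\exp_{\tilde H}$, which holds because $\tilde H \cong \R^k \times \tilde K$ is the direct product of a Euclidean space and a compact simply connected semisimple Lie group, both enjoying surjective exponential. Consequently every $\gamma \in \pi_1(H)$ can be written as $\gamma = \exp_{\tilde H}(v)$ with $v \in \exp_H^{-1}(e)$, and then $\bar\Phi(\gamma) = e^{i\mu(v)/\hbar} = 1$ by hypothesis. The resulting $\Phi\colon H \to S^1$ has $\ker\Phi$ closed with Lie algebra $\h^0$, so its identity component is a closed connected Lie subgroup integrating $\h^0$; by uniqueness this identity component coincides with $H^0$, proving that $H^0$ is closed.
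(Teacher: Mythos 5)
Your argument is correct, and the forward implication coincides with the paper's: differentiate the quotient morphism $\zF:G_\zm\to G_\zm/G_\zm^0\simeq S^1$ to get $\zf$ with $\ker\zf=\g_\zm^0$, deduce $\zm\big|_{\g_\zm}=\hbar\zf$ by proportionality of functionals with equal kernels, and read off the $\Z_\hbar$-integrality from $\zF(\exp v)=e^{i\zf(v)}$. Where you genuinely diverge is the converse. The paper defines the character directly on $H=G_\zm$ by the formula $\zF(\exp v)=e^{i\la\zf,v\ran}$, relying on surjectivity of $\exp:\h\to H$; the delicate point there --- that this formula is well defined and multiplicative when $\exp(v)=\exp(w)$ with $H$ nonabelian --- is not argued at that spot, but is covered by the subsequent Remark, which pushes everything down to the torus $H/H'$ and invokes Proposition \ref{l1} on closed subgroups of tori. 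You instead go \emph{up}: integrate $\zf=\zm/\hbar$ on the universal cover $\wt H$ (where no obstruction exists), and descend by checking triviality of $\bar\zF$ on $\pi_1(H)=\ker q$, using the identity $\exp_H^{-1}(e)=\exp_{\wt H}^{-1}(\pi_1(H))$ and surjectivity of $\exp_{\wt H}$ (valid since $\wt H\simeq\R^k\ti\wt K$ with $\wt K$ compact by Weyl's theorem). This buys you a well-definedness argument that is automatic from the universal property of the covering, at the price of invoking the same structure theory the paper uses elsewhere; the paper's torus reduction, conversely, packages the obstruction into the lattice statement of Proposition \ref{l1}, which it then reuses for the explicit unitary-group computations. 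Both routes are sound; yours is the more self-contained proof of the ``if'' direction as stated.
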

\begin{remark} There is another approach to the problem of closeness of $H^0$. Suppose that $H/H^0=S^1$, As $S^1$ is commutative, $\ker(\zF)$ clearly contains $H'$, so it goes down to a group homomorphism $\tilde\zF:H/H'\to H/H^0=S^1$. But $H/H'$ is a torus $\T$, which can be identified with the identity component of $Z(H)$, and whose Lie algebra $\t$
is the center $\mathfrak{z}$ of $\h$, so our question reduces to the question whether the image $[H^0]=H^0/H'$ of $H^0$ under the canonical projection $p:H\to \T=H/H'$ is a Lie subgroup of the torus $\T$. Of course, the Lie algebra $[\h^0]=\h^0/\h'$ of the Lie subgroup $[H^0]$ in $\T$ is commutative and can be identified with the center $\mathfrak{z}$ of $\h$. In this way we reduced our problem to the case of tori: when a linear subspace $V$ of the Lie algebra $\t$ of a torus $\T$
corresponds to a closed Lie subgroup of $\T$? In other words, when $\exp(V)$ is closed in $\T$?

It is well known that for tori the kernel $\zG=\ker(\exp)=\exp^{-1}(e)$ of the exponential map $\exp:\t\to\T$ is a free and discrete subgroup of the additive group $(\t,+)$ which spans $\t$. Note that the exponential map is in this case a group homomorphism with respect to the additive group structure on $\t$. Consequently, $\T$ can be identified with $\t/\zG$ also as a group.

\m Conversely, any discrete subgroup $\zG$ of the additive group $(\t,+)$ of a real $n$-dimensional vector space $\t$ is free, with linearly independent free generators. If $\zG$ spans $\t$, we will say that $\zG$ is \emph{total} in $\t$. In this case, the quotient group $\t/\zG$ is an $n$-dimensional torus $\T^n$ whose (commutative) Lie algebra can be identified with $\t$. Lie subalgebras of $\t$ are just linear subspaces $V\subset\t$.
\begin{proposition}\label{l1} Let us consider a torus $\T=\t/\zG$ and let $V$ be a linear subspace of $\t$. The following are equivalent:
\begin{description}
\item{(a)} The image $\exp(V)\subset \T$ of $V$ under the exponential map $\exp:\t\to\t/\zG=\T$ is a closed subgroup in $\T$;
\item{(b)} The discrete subgroup $\zG_0=V\cap\zG$ of is free and total in $V$;
\item{(c)} The image $\zr(\zG)$ of $\zG$ under the canonical projection $\zr:\t\to \t/V$ is a discrete and total subgroup of the vector space $\t/V$;
\item{(d)} The subgroup
$$V+\zG=\{v+\zg:\, v\in V,\ \zg\in\zG\}$$
of $(\t,+)$ is closed.
\end{description}
\end{proposition}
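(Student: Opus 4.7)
The plan is to prove the cycle of implications (b)~$\Rightarrow$~(a)~$\Rightarrow$~(d)~$\Rightarrow$~(c)~$\Rightarrow$~(b). Two of these are almost formal. For (a)~$\Rightarrow$~(d), the canonical projection $\pi:\t\to\T$ is continuous, so $V+\zG=\pi^{-1}(\exp(V))$ is closed whenever $\exp(V)$ is. For (b)~$\Rightarrow$~(a), if $V\cap\zG$ is total in $V$ then it is a lattice in $V$ (being a discrete subgroup of a vector space that it spans), so $V/(V\cap\zG)$ is a compact torus; the inclusion $V\hookrightarrow\t$ descends to an injective homomorphism $V/(V\cap\zG)\to\T$ whose image is exactly $\exp(V)$, which is therefore compact and hence closed.

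The core step is (d)~$\Rightarrow$~(c). I would use the linear quotient $\zr:\t\to\t/V$ together with the identity $V+\zG=\zr^{-1}(\zr(\zG))$; since $\zr$ is an open continuous surjection, (d) is equivalent to $\zr(\zG)$ being closed in $\t/V$. Now $\zr(\zG)$ is finitely generated as a quotient of $\zG\cong\Z^n$, and by the structure theorem for closed subgroups of $\R^m$ -- each decomposes as a vector subspace summand plus a lattice in a complementary subspace -- finite generation forces the subspace summand to vanish, since $\R^k$ is not finitely generated for $k\ge 1$. Hence closed implies discrete here. Totality of $\zr(\zG)$ in $\t/V$ follows immediately from totality of $\zG$ in $\t$, yielding (c).

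Finally, for (c)~$\Rightarrow$~(b), I would appeal to the short exact sequence of abelian groups
\[
0\longrightarrow V\cap\zG\longrightarrow\zG\longrightarrow\zr(\zG)\longrightarrow 0.
\]
The kernel $V\cap\zG$ is free of some rank $r\le d:=\dim V$, and $\zr(\zG)$ is torsion-free, because $kg\in V$ with $g\in\zG$ and $k\ge 1$ forces $g\in V\cap\zG$, so $\zr(g)=0$. Hence $\zr(\zG)$ is free of rank $n-r$, where $n=\dim\t$. The elementary rank bound -- a discrete subgroup of a real vector space has rank at most its dimension -- applied to $\zr(\zG)\subset\t/V$ under (c) then gives $n-r\le n-d$, i.e.\ $r=d$, which is precisely (b). I expect the main technical obstacle to be the structure theorem for closed subgroups of $\R^m$ invoked in (d)~$\Rightarrow$~(c); the remaining steps are just dimension and rank bookkeeping together with the compactness argument in (b)~$\Rightarrow$~(a).
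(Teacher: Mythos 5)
Your proof is correct, but it runs the cycle in the opposite direction from the paper, which proves (a)~$\Rightarrow$~(b)~$\Rightarrow$~(c)~$\Rightarrow$~(d)~$\Rightarrow$~(a), and the analytic weight falls on a different edge. In the paper the hard step is (b)~$\Rightarrow$~(c): after splitting $\t=V\oplus V'$, discreteness of $\zr(\zG)$ is proved by hand, using a compact fundamental parallelepiped $K$ for $\zG_0$ in $V$ and extracting a convergent subsequence to contradict discreteness of $\zG$; the remaining three implications are the same formal manipulations you use, namely $V+\zG=\zr^{-1}(\zr(\zG))$, openness of $\exp$ and $\zr$, and the fact that a closed subgroup of a torus is a subtorus. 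In your version the hard step is (d)~$\Rightarrow$~(c), where you replace the explicit compactness argument by the classification of closed subgroups of $\R^m$ as (vector subspace)~$\oplus$~(lattice), killing the vector part by finite generation of $\zr(\zG)$; this is clean and conceptually transparent, but it imports a theorem whose standard proof contains essentially the same compactness argument the paper carries out directly, so it is not really more elementary. What your route genuinely adds is the purely algebraic (c)~$\Rightarrow$~(b): rank bookkeeping in $0\to V\cap\zG\to\zG\to\zr(\zG)\to 0$ (torsion-freeness of the quotient, rank additivity, and the bound $\mathrm{rank}\le\dim$ for discrete subgroups) pins down $\mathrm{rank}(V\cap\zG)=\dim V$ without any topology beyond facts the paper already states about discrete subgroups; the paper instead gets (b) directly from (a) via the structure of closed subgroups of tori. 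Both arguments are complete; the paper's is more self-contained, yours separates the analytic input (one invocation of the closed-subgroup theorem) from the algebraic accounting more cleanly.
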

\begin{proof}
$(a)\Rightarrow (b)$ \ Since $\exp(V)$ is a closed subgroup in a torus, it is a torus itself. Moreover, $\exp(V)=V/(V\cap\zG)$, so $\zG_0=V\cap\zG$ is free and total in $V$.

\mn $(b)\Rightarrow (c)$ \ Let $V'$ be a vector subspace of $\t$ which is complementary to $V$, so we have the decomposition $\t=V\op V'$. We can now identify $\t/V$ with $V'$ and $\zr$ with the canonical projection $\t\to V'$ associated with the decomposition. As $\zG$ spans $\t$, the subgroup $\zr(\zG)$ is total in $V'$. We will show that $\zr(\zG)$ is discrete. In the other case, there is a sequence of points $\zg_n$ of $\zG$ such that $v'_n=\zr(\zg_n)\ne 0$ but $v'_n=\zr(\zg_n)\rightarrow 0$.
Let $x_1,\dots,x_k$ be free generators of $\zG_0$. They span $V$, so $V=K+\zG_0$, where $K$ is a compact subset of $K$ defined by
$$K=\{a_1x_1+\dots +a_kx_k\in V:\vert a_i\vert\le 1\}.$$
Hence, we can choose $\zg_n$ to be of the form $v_n+v'_n$, where $v_n\in K$. Since $K$ is compact, passing to a subsequence, we can assume that $v_n\rightarrow w\in K$. Hence, $\zg_n\rightarrow w$. But $\zG$ is closed and discrete, so $w\in\zG_0$ and $\zg_n=w$ for almost all $n$; a contradiction.

\mn $(c)\Rightarrow (d)$ \ The canonical projection $\zr:\t\to\t/V$ is a continuous surjection and $\zr(\zG)$ is closed in $\t/V$, so
$$\t\setminus(V+\zG)=\zr^{-1}\big((\t/V)\setminus\zr(\zG)\big)$$
is open (here, $\setminus$ denotes the setminus).

\mn $(d)\Rightarrow (a)$ \ The map $\exp:\t\to \T=\t/\zG$ is open and surjective, so $\exp(\t\setminus(V+\zG))=T\setminus \exp(V)$ is open.

\end{proof}
\end{remark}
\begin{corollary}
Let $\T=\t/\zG$ be a torus and $\zm\in\t^*$. The Lie subgroup $\exp\big(\ker(\zm)\big)$ is closed in $\T$ if and only if there exists $\hbar> 0$ such that $\zm(\zG)\subset\Z_\hbar$.
In particular, $\zf=\zm/\hbar$ induces a nontrivial character (\ref{char}).
\end{corollary}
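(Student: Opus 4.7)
The plan is to apply Proposition \ref{l1} with $V:=\ker(\zm)\subset\t$, which converts the question into one about discrete subgroups of $\R$. First I would dispose of the trivial case $\zm=0$, in which $V=\t$, so $\exp(V)=\T$ is closed and $\zm(\zG)=\{0\}\subset\Z_\hbar$ for every $\hbar>0$. Assuming henceforth $\zm\ne 0$, $V$ is a hyperplane in $\t$ and $\zm$ descends to a linear isomorphism $\tilde\zm:\t/V\to\R$ satisfying $\zm=\tilde\zm\circ\zr$, where $\zr:\t\to\t/V$ is the canonical projection from Proposition \ref{l1}.

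By the equivalence (a)$\Leftrightarrow$(c) of that proposition, $\exp(V)$ is closed in $\T$ precisely when $\zr(\zG)$ is discrete and total in $\t/V$; transporting via $\tilde\zm$, this is equivalent to $\zm(\zG)$ being a discrete, total subgroup of $\R$. The classical fact that discrete total subgroups of $\R$ are exactly those of the form $a\Z$ with $a>0$ then yields one direction: $\zm(\zG)=a\Z$, and setting $\hbar:=a/(2\pi)$ gives $\zm(\zG)=\Z_\hbar\subset\Z_\hbar$.

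For the converse, suppose $\zm(\zG)\subset\Z_\hbar=2\pi\hbar\cdot\Z$. Then $\zm(\zG)$ is a subgroup of a group isomorphic to $\Z$, so it must be of the form $a\Z$ with $a\ge 0$. The only step requiring a moment's care is verifying $a>0$: since $\zG$ spans $\t$ and $\zm\ne 0$, there exists $\zg\in\zG$ with $\zm(\zg)\ne 0$, forcing $a>0$. Hence $\zm(\zG)$ is discrete and total in $\R$, and Proposition \ref{l1} gives closedness of $\exp(V)$ in $\T$. The ``in particular'' clause then follows at once: $\zf=\zm/\hbar$ satisfies $\zf(\zG)\subset 2\pi\Z$, so the formula $\zF(\exp(v))=e^{i\langle\zf,v\rangle}$ descends to a well-defined continuous homomorphism $\zF:\T=\t/\zG\to S^1$, which is nontrivial since $\zf\ne 0$.

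I do not expect a genuine obstacle here; the geometric content has already been absorbed into Proposition \ref{l1}, and what remains is the elementary classification of discrete subgroups of the line together with the positivity check $a>0$ in the converse direction.
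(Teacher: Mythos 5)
Your proof is correct and follows essentially the same route as the paper: both reduce the statement to Proposition \ref{l1} via $V=\ker(\zm)$ and the identification of $\t/\ker(\zm)$ with $\R$ through $\zm$, together with the classification of discrete (total) subgroups of the line. The only cosmetic difference is that for the converse the paper invokes criterion (d) of Proposition \ref{l1} (closedness of $\ker(\zm)+\zG=\zm^{-1}(\zm(\zG))$) whereas you use criterion (c) in both directions; your explicit handling of $\zm=0$ and of the positivity of the generator $a$ is a small but welcome extra care.
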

\begin{proof} According to Proposition \ref{l1} (c), $\exp\big(\ker(\zm)\big)$ is closed in $\T$ if and only if $\zr(\zG)$ is a discrete and total subgroup of $\t/\ker(\zm)$, where $\zr:\t\to\t/\ker(\zm)$ is the canonical projection. Since $\t/\ker(\zm)$ is 1-dimensional, $\zr(\zG)$ has a single free generator, say $\zr(\zg)$, for some $\zg\in\zG$. Denote $\hbar=\zm(\zg)/2\pi$, so $\zm(\zG)\in\Z_\hbar$. Conversely, if $\zm(\zG)\subset\Z_\hbar$, then $\zm(\ker(\zm)+\zG)=\zm(\zG)\subset\Z_\hbar$, so $\ker(\zm)+\zG$ is closed in $\t$, thus $\exp(\ker(\zm))$ is closed in view of Proposition \ref{l1} (d).

\end{proof}
\noindent Now, for compact connected Lie groups $G$, we are able to make use of Theorem \ref{main0}, since
we know when $G_\zm^0$ is closed in $G_\zm$. Combining Theorems \ref{dfr}, \ref{main0}, and \ref{main}, we can provide a canonical and explicit construction of coadjoint orbit contactifications for $G$.

\mn Let $G$ be a compact and connected Lie group, $\g$ be its Lie algebra, and $\exp:\g\to G$ be the corresponding exponential map. Denote with $\zvy$ the Liouville 1-form on $\sT^*G$, and with $\zw=\xd\zvy$ the canonical symplectic form on $\sT^*G$.
The cotangent lift of the left-regular action of $G$ on itself is a canonical Hamiltonian action of $G$ on the symplectic manifold $(\sT^*G,\zw)$, and the corresponding canonical moment map $J:\sT^*G\to\g^*$ is in the right trivialization $\sT^*G\simeq G\ti\g^*$ just the projection on $\g^*$. Let us consider the coadjoint orbit $\cO$ of a certain $\zm\in\g^*$, equipped with the canonical KKS symplectic form $\zw^\cO$.
\begin{theorem}
Denote with $\hat\zvy$ and $\hat\zw=\xd\hat\zvy$ the restrictions of differential forms $\zvy$ and $\zw=\xd\zvy$, respectively, to the submanifold $N_\zm=J^{-1}(\zm)$ 
of $\sT^*G$, 
$$\hat\zvy=\zvy\,\big|_{N_\zm}=i^*_{N_\zm}(\zvy),\quad\hat\zw=\zw\,\big|_{N_\zm}=i^*_{N_\zm}(\zw)$$
where $i_{N_\zm}:N_\zm\hookrightarrow\sT^*G$ is the obvious immersion.

\mn Then the 2-form $\hat\zw$ is simple (see Definition \ref{forms}), and the corresponding regular reduction (see Theorem \ref{dfr}) of $(N_\zm,\hat\zw)$, i.e., actually the Marsden-Weinstein-Meyer symplectic reduction, gives a symplectic manifold $(P_\zm,\zw^\zm)$ which is canonically isomorphic to $(\cO,\zw^\cO)$.

Moreover, if $\zm\in\g^*$ takes $\Z_\hbar$-values on $\exp^{-1}(e)$, for some $\hbar>0$ and $\exp:\g_\zm\to G_\zm$ being the exponential map, then $\hat\zvy$ is also simple, and the corresponding regular reduction (Theorem \ref{dfr}) again) gives a principal contact manifold $(M,\zh)$ with the structure group $S^1$, which is actually a contactification of $(\cO,\zw^\cO)$.
\end{theorem}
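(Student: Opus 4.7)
The plan is to assemble the machinery already developed: identify the characteristic distributions of $\hat\zw$ and $\hat\zvy$ explicitly in the right trivialization, verify simplicity using the compactness of $G$ together with the $\Z_\hbar$-integrality assumption, and then invoke Theorem \ref{dfr} combined with Theorem \ref{main0} and Theorem \ref{main}. There is no new analytic content to produce; everything is a matter of checking that the hypotheses of these earlier theorems are met in the present setting.

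For the first claim, I recall from the Marsden--Weinstein--Meyer discussion that in the right trivialization $\sT^*G\simeq G\ti\g^*$ tangent vectors to $N_\zm=G\ti\{\zm\}$ at $(g,\zm)$ are of the form $(v,0)$ with $v\in\g$, and that the kernel of $\hat\zw_{(g,\zm)}$ was computed to be $\g_\zm\ti\{0\}$, where $\g_\zm=\{v\in\g:\ad^*_v(\zm)=0\}$ is the Lie algebra of the isotropy group $G_\zm$. Since $G$ is compact and connected, $G_\zm$ is closed, compact, and connected, so the foliation of $N_\zm\simeq G$ by the right cosets $gG_\zm$ is simple with smooth quotient $G/G_\zm$. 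Hence $\hat\zw$ is a simple 2-form in the sense of Definition \ref{forms}, and its regular reduction coming from Theorem \ref{dfr} is precisely the Marsden--Weinstein--Meyer reduction $(P_\zm,\zw^\zm)$. The canonical symplectomorphism onto $(\cO,\zw^\cO)$ is the map $A_\zm([g])=\Ad^*_g(\zm)$ already shown to be a symplectomorphism.

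For the second claim, the $\Z_\hbar$-integrality of $\zm$ on $\exp^{-1}(e)$ is exactly the condition of Theorem \ref{main}, which gives that the normal subgroup $G_\zm^0$ associated with the codimension-one ideal $\g_\zm^0=\g_\zm\cap\ker(\zm)$ (see Proposition \ref{p2a}) is closed in $G_\zm$. With this, the hypothesis of Theorem \ref{main0} is satisfied and produces a principal contact manifold $(M,\zh)$ with $M=(G/G_\zm^0)\ti\{\zm\}$ and structure group $\cG^e=G_\zm^e/G_\zm^0$, together with the identification of the reduced symplectic quotient as a cover of $(\cO,\zw^\cO)$. Because $G$ is compact and connected, the isotropy group $G_\zm$ is connected (as recalled at the opening of this section), so $G_\zm^e=G_\zm$ and $\cG=\cG^e=G_\zm/G_\zm^0$; being a compact connected 1-dimensional Lie group, this quotient is $S^1$ rather than $\R$. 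Consequently $(M,\zh)$ is a contactification of $(\cO,\zw^\cO)$ itself, not merely of a discrete cover.

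The main point requiring care is bookkeeping rather than substance: one must match the period of the Reeb flow on $(M,\zh)$ with the integrality constant $\hbar$. Concretely, the generator $\sv$ of $\Lie(\cG)$ chosen in Theorem \ref{main0} has to be rescaled so that the induced fundamental vector field $\hat\sv$ on $M$ coincides with the Reeb vector field $\cR$ of $\zh$; once this is done, the minimal period of $\cR$ is forced to equal $2\pi\hbar$ via the character $\zf=\zm/\hbar$ constructed in Theorem \ref{main}. After this normalization, the theorem follows by direct citation of the earlier results.
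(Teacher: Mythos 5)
Your proposal is correct and follows essentially the same route as the paper, which offers no separate proof but states that the theorem follows by combining Theorems \ref{dfr}, \ref{main0}, and \ref{main}; you assemble exactly these ingredients, correctly identifying $\ker(\hat\zw)=\g_\zm$ and $\zq(\hat\zvy)=\g_\zm^0$ in the right trivialization and using the connectedness of $G_\zm$ (valid for compact connected $G$) to conclude $\cG=\cG^e\simeq S^1$ so that $(M,\zh)$ contactifies $(\cO,\zw^\cO)$ itself rather than a cover. The closing remark about normalizing the generator so that the Reeb period is $2\pi\hbar$ is not needed for the statement as given, but it is consistent with the paper's subsequent discussion.
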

\begin{remark}\label{ul}
In practice, to check the integrality condition for $\zm$, it is convenient to consider the torus $\T=G_\zm/G'_\zm$, then choose generators $x_j$ of the discrete subgroup $\zG\subset\t$ being the kernel of the exponential map $\exp:\t\to\T$, and finally check whether there exists $\hbar>0$ such that $\ul{\zm}(x_j)\in\Z_\hbar$ for all $j$. Here,
$\ul{\zm}\in\t^*$ is just $\zm$ if we understand $\t^*$ as the annihilator $\Ann(\g'_\zm)$ of the Lie ideal  $\g'_\zm\subset\g_\zm$,
$$\t^*=\big(\g_\zm/\g'_\zm\big)^*=\Ann(\g'_\zm)\subset\g_\zm^*.$$
\end{remark}
\mn Let us look closer at the contact manifold $(M,\zh)$ for $\zm(K)=\Z_\hbar$, where $\hbar>0$ and $K$ is the kernel of the exponential map $\exp:\g_\zm\to G_\zm$. Note that we have put equality, so $\hbar$ is the positive generator of $\zm(K)$. Such $\zm$ we will call \emph{$\hbar$-integral}. According to (\ref{M}) and (\ref{zh}),
we can identify $M$ with $G/G_\zm^0$, so
$$\sT M\simeq (G/G_\zm^0)\ti(\g/\g_\zm^0)\quad \text{and}\quad\zh_{[g]}([v])=\bk{\zm}{v}.$$
The leaves of the characteristic foliation of $\xd\zh$ are just represented by the orbits of $G_\zm/G_\zm^0\simeq S^1$, so the vectors of the corresponding Reeb vector field are of the form $([g],[v])$, where $g\in G_\zm$, $v\in\g_\zm$, and $\bk{\zm}{v}=1$.
As $\zm(\zG)=\Z_\hbar$, the $S^1$-action on $M$ induced by the Reeb vector field is therefore an $\big(\R/(2\pi\hbar\,\Z)\big)$-action rather than ($\R/\Z$)-action. This distinction, which is not clearly stated in \cite{Boothby:1958} but carefully explained in \cite{Grabowska:2023}, is important, as it leads to different contact forms. Of course, $\R/\Z$ and $\T_\hbar=\R/\Z_\hbar$ are isomorphic Lie groups and they lead to isomorphic contact manifolds, but not to isomorphic contact forms (isomorphic co-oriented contact manifolds). This is because $2\pi\hbar$ represents the minimal period of the flow generated by the Reeb vector field. Of course, if these periods are different, then the contact forms cannot be isomorphic. In this sense, $M$ is a $\T_\hbar$-principal bundle in the terminology of \cite{Bates:1997,Brylinski:1993} and $(M,h)$ is a \emph{$\T_\hbar$-principal contactification} of $(\cO,\zw^\cO)$.

\mn It is well known (see, e.g., \cite{Boothby:1958,Grabowska:2023}) that $\T_\hbar$-principal contact manifolds $(M,\zh)$ correspond to Hermitian line bundles $L$ over its contact-to-symplectic reductions $(N,\zw)$, where $N=M/\ker(\xd\zh)$ and $\zw$ is the reduction of $\xd\zh$. In this identification, $\zh$ can be viewed as a connection $\n$ in the line bundle $L\to N$, and the symplectic form $\zw$ represents its curvature. In other words, any $\T_\hbar$-principal contact manifold $(M,\zh)$ represents a \emph{prequantization} of the symplectic manifold $(N,\zw)$ in terms of the theory of Geometric Quantization (see e.g. \cite{Bates:1997,Brylinski:1993,Kostant:1970}). The Geometric Quantization was built as an alternative to Dirac's quantization of commutation relations, which cannot be carried out completely, as shown in \cite{Gotay:2000}.
The fundamental result of this theory is a Weil theorem stating that there exists a prequantization of a symplectic manifold $(N,\zw)$, i.e., a Hermitian line bundle $L\to N$ equipped with a connection $\n$ whose curvature is represented by $\zw$, if and only if the symplectic form is \emph{integral}. More precisely, $\hbar>0$ is the minimal positive number such that the class $[\zw]$ of $\zw$ in the de Rham cohomology group $\sH^2(N;\R)$ lies in the image of $\sH^2(N;\Z_\hbar)$ under the canonical homomorphism
$$\sH^2(N;\Z_\hbar)\to\sH^2(N;\R).$$

\noindent Of course, in the actual geometric quantization, $\hbar$ is the Planck constant, but in the mathematical theory $\hbar$ can be an arbitrary positive constant, exactly as we have treated it before. We will speak therefore about \emph{$\hbar$-integrality}.
\begin{definition} Let $\hbar>0$. A symplectic form $\zw$ on $N$ is \emph{$\hbar$-integral} if $\hbar$ is the minimal positive number such that integrating $\zw$ over any compact surface $\mathfrak{S}$ (2-dimensional submanifold) we get a number from $\Z_\hbar$,
$$\int_{\mathfrak{S}}\zw\in\Z_\hbar.$$
\end{definition}
\no More about fiber bundles with $S^1$-fibers you can find in \cite{Brylinski:1993,Kobayashi:1956}.
\begin{corollary}
Let $G$ be a compact connected Lie group and $\zm\in\g^*$ such that $\ul{\zm}(\zG)=\Z_\hbar$ (cf. Remark \ref{ul}) for some $\hbar>0$, where $\zG$ is the kernel of the exponential map for the torus $\T=G_\zm/G'_\zm$,
$\exp:\t\to\T$. Then the KKS symplectic form on $\cO$ is $\hbar$-integral.
\end{corollary}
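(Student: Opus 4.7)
The plan is to deduce $\hbar$-integrality of $\zw^\cO$ from the principal contactification $(M,\zh)$ furnished by the preceding theorem, combined with the prequantization dictionary and the Weil theorem recalled immediately before the corollary.

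First, I would observe that the hypothesis $\ul{\zm}(\zG)=\Z_\hbar$ is precisely the integrality assumption of the preceding theorem (cf.\ Remark~\ref{ul}), strengthened by the information that $\hbar$ itself is the positive generator of the discrete subgroup $\ul{\zm}(\zG)\subset\R$. Applying that theorem, I obtain a principal contact manifold $(M,\zh)$ whose structure group is the circle $\T_\hbar=\R/\Z_\hbar$, together with a principal bundle projection $p\colon M\to\cO$ satisfying $p^*(\zw^\cO)=\xd\zh$. In particular, $(M,\zh)$ is a $\T_\hbar$-principal contactification of $(\cO,\zw^\cO)$.

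Second, I would invoke the identification, explained in the paragraphs just above the statement, between $\T_\hbar$-principal contactifications and prequantizing Hermitian line bundles: $(M,\zh)$ corresponds to a Hermitian line bundle $L\to\cO$ with a connection $\n$ whose curvature represents $\zw^\cO$. By the Weil theorem cited there, the existence of such a prequantization with the specific circle $\T_\hbar$ as structure group is equivalent to the $\hbar$-integrality of $\zw^\cO$ in the sense of the definition just above, i.e., $[\zw^\cO/2\pi\hbar]$ lies in the image of $\sH^2(\cO;\Z)$ in $\sH^2(\cO;\R)$ and $\hbar$ is the smallest such constant.

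The main technical point, and what I expect to be the only real obstacle, is verifying the minimality clause: that no $\hbar'<\hbar$ also satisfies $\int_{\mathfrak S}\zw^\cO\in\Z_{\hbar'}$ for every compact surface ${\mathfrak S}\subset\cO$. This minimality follows from the precise form of the construction: the Reeb vector field of $\zh$ generates an effective $\T_\hbar$-action on $M$ whose minimal positive period is exactly $2\pi\hbar$, because the assumption $\ul{\zm}(\zG)=\Z_\hbar$ (equality, not mere inclusion) forces $\hbar$ to be the positive generator of the group of admissible periods. Transporting this back to $\cO$ via the principal bundle $p$, the class of $\zw^\cO/2\pi\hbar$ is identified with the integral Chern class of $L\to\cO$, so that there exists a 2-cycle on which $\zw^\cO$ integrates to exactly $2\pi\hbar$. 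This gives the required minimality and concludes that $\zw^\cO$ is $\hbar$-integral.
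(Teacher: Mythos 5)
Your route is the one the paper intends: the corollary sits immediately after the theorem producing the $\T_\hbar$-principal contactification $p\colon(M,\zh)\to(\cO,\zw^\cO)$ with $p^*(\zw^\cO)=\xd\zh$, and after the discussion identifying such bundles with prequantizing Hermitian line bundles; the paper itself offers no further argument, calling the result ``essentially well known'' and citing Kostant, Kirillov and Bates--Weinstein. Your first two steps are sound and deliver the containment part of the claim: every period $\int_{\mathfrak{S}}\zw^\cO$ lies in $\Z_\hbar$, i.e., $[\zw^\cO/2\pi\hbar]$ is an integral class.

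The gap is precisely in the step you single out as the crux. From ``$[\zw^\cO/2\pi\hbar]$ is identified with the integral Chern class of $L\to\cO$'' you infer ``there exists a 2-cycle on which $\zw^\cO$ integrates to exactly $2\pi\hbar$''. That inference is invalid: an integral class can be divisible, so all of its periods may lie in $k\Z$ for some $k\ge 2$, in which case $2\pi\hbar$ would not generate the period group. The minimal period of the Reeb flow does not transport to 2-cycles either, since a fibre of $p$ is a 1-cycle (typically null-homotopic in $M$, as in the Hopf fibration). The failure is not hypothetical: for $G=\SO(3)$ and $\zm$ of norm $r$ one has $G_\zm=\SO(2)$, $\zG=2\pi\Z$, hence $\ul{\zm}(\zG)=\Z_r$ and $\hbar=r$; but the orbit is the sphere of radius $r$ with KKS form $\frac{1}{r}\,\vol_{S^2}$, whose single period is $4\pi r\in\Z_{2r}$, so the period group is $\Z_{2\hbar}$ rather than $\Z_\hbar$. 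No argument that never uses $\pi_1(G)$ can therefore establish the minimality you assert. What minimality actually requires (and what holds, e.g., for simply connected $G$) is the identification $\sH_2(\cO;\Z)\cong\pi_2(\cO)\cong\ker\big(\pi_1(G_\zm)\to\pi_1(G)\big)$, under which pairing with $[\zw^\cO]$ becomes evaluation of $\ul{\zm}$ on a sublattice of $\zG$; the hypothesis $\ul{\zm}(\zG)=\Z_\hbar$ controls the period group exactly when that sublattice is all of $\zG$. The safe content of the corollary, and what your argument does prove, is the containment $\int_{\mathfrak{S}}\zw^\cO\in\Z_\hbar$ for every compact surface $\mathfrak{S}$.
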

\no The above result is essentially well known \cite{Bates:1997,Kirillov:2004,Kostant:1970} and the coadjoint orbits of Lie groups have been studied in this context by many other authors (see, e.g., \cite{Gotay:2002,Litsgard:2018,Oblak:2017,Marsden:1982}).

\subsection{The case of unitary groups}
In what follows, we will identify the real vector space of Hermitian operators $A=A^\dag$ on $C^n$ with the dual $\u^*$ of the real Lie algebra $\u$ of anti-Hermitian operators, according to the canonical pairing between Hermitian $A\in \u^*$ and anti-Hermitian $T\in \u$ operators, set this time
\be\label{npair}\la A,T\ran={i}\cdot\tr(A T).\ee
The multiplication by $i$ establishes further vector space isomorphisms (\ref{iso}),
$$\u^*\ni A\mapsto -iA\in \u,$$
which identifies the adjoint $\Ad_U(T)=U T U^\dag$ with the coadjoint
$$(\Ad^*)_U(A)=(\Ad_{U^{-1}})^*(A)=U A U^\dag$$
action of the unitary group $\U=\U(n)$.
\mn Let us consider now $\zm\in\u^*$ of the form
$$\zm=\begin{bmatrix}
   \fbox{$\zl_1$}&\hskip-10pt 0 & \cdots & 0\\
 0  &\hskip-10pt\fbox{$\zl_2$}& \cdots & 0\\
   \vdots &  &  \ddots & \vdots\\
  0 & \cdots &\hskip10pt 0  &\fbox{$\zl_k$}
\end{bmatrix}\,,
$$
where $\zl_j\in\R$ are pairwise different reals, and $\fbox{$\zl_j$}$ is the diagonal matrix $\zl_jI_{d_j}$ of size $d_j>0$. Of course, $\sum_jd_j=n$.
Since the coadjoint action of $U\in\U$ maps $\zm$ to $U\zm\, U^\dag$, we have
$$U\in\mathrm{U}_\zm\ \Leftrightarrow\ U\zm=\zm\, U.$$
Commuting with $\zm$, such $U$ must respect the eigenspaces of $\zm$, so $U\in\U_\zm$ must have the same block-diagonal form as $\zm$,

\mn
\be\label{Gmu}U=\begin{bmatrix}
   U_1& \hskip-10pt 0 & \cdots & 0\\
 0  &\hskip-10pt U_2 & \cdots & 0\\
 \vdots  &  &  \ddots & \vdots\\
  0 & \cdots &\hskip10pt 0 &U_k
\end{bmatrix}\,,
\ee

\mn where $U_j\in\U(d_j)$. Consequently, the Lie algebra $\u_\zm$ consists of block-diagonal matrices

\medskip
\be\label{gmu}T=\begin{bmatrix}
   T_1 &\hskip-10pt 0 & \cdots & 0\\
 0  &\hskip-10pt T_2& \cdots & 0\\
  \vdots &  &  \ddots & \vdots\\
  0 & \cdots &\hskip10pt 0  & T_k
\end{bmatrix}\,,
\ee

\mn where $T_j$ are anti-Hermitian.
\begin{remark}
Since the derived (commutator) normal subgroup $\big(\U(n)'\big)=\big(\U(n),\U(n)\big)$ is the group $\SU(n)$ of matrices $U$ from $\U(n)$ satisfying $\det(U)=1$. The subgroup $\SU(n)$ is 1-codimensional in $\U(n)$ and $\U(n)/\SU(n)\simeq \U(1)=S^1$ is just the circle $\U(1)\subset\C$. The canonical projection of $\U(n)$ onto $\U(n)/\mathrm{SU}(n)\simeq S^1$ looks in this identification like
$$\det:\U(n)\to \U(1),\quad U\mapsto \det(U).$$
The Lie algebra $\su(n)$ of $\SU(n)$ consists of elements from $\u$ with trace 0.

\no All this implies that the commutator subgroup $\U'_\zm$ of $\U_\zm$ consists of matrices
(\ref{Gmu}) such that $U_j\in\SU(d_j)$, and $\U_\zm/\U'_\zm\simeq \T^k$ is the $k$-dimensional torus
$\T^k=(S^1)^{\ti k}$. The canonical surjective homomorphism $\zt:\U_\zm\to\T^k=(S^1)^{\ti k}$ reads
$$\zt(U)=\big(\det(U_1),\dots,\det(U_k)\big).$$
Since for any square matrix $T$ we have $\det\big(e^T\big)=e^{\tr(T)}$,
the corresponding homomorphism $D\zt:\u_\zm\to\R^k$ takes the form
\be\label{Z}D\zt(T)=\big(\tr(T_1),\dots,\tr(T_k)\big),\ee
so the Lie algebra $\u'_\zm$ consists of matrices (\ref{gmu}) such that $T_j\in\su(d_j)$.
Vanishing exactly on $\u'_\zm$, the map (\ref{Z}) induces an isomorphism
$$\zr:(\u_\zm/\u'_\zm)\to\R^k,\quad \zr([T])=\big(\tr(T_1),\dots,\tr(T_k)\big).$$
where $\R^k$ is understood as the Lie algebra of the torus $\T^k$.
\end{remark}
\no As the exponential map $\exp:\u_\zm\to\U_\zm$ is clearly
$$\exp(T)=\begin{bmatrix}
   \exp(T_1)& \hskip-10pt 0 & \cdots & 0\\
 0  &\hskip-10pt \exp(T_2) & \cdots & 0\\
 \vdots  &  &  \ddots & \vdots\\
  0 & \cdots &\hskip10pt 0 &\exp(T_k)
\end{bmatrix},
$$
and $\det(e^T)=e^{\tr(T)}$, the composition $(\zt\circ\exp):\u_\zm\to\T^k$ reads
$$(\zt\circ\exp)(T)=\big(e^{\tr(T_1)},\dots,e^{\tr(T_k)}\big).$$
It is clear now, that $T$ is in the kernel of the above map if and only if $\tr(T_j)=-2\pi in_j$ for some $n_1,\dots,n_k\in\Z$. According to (\ref{npair}),
$$\bk{\zm}{T}={i}\sum_j\big(\zl_j\cdot\tr(T_j)\big),
$$
so $\zm$ takes values in $\Z_\hbar$ on the kernel of $\zt\circ\exp$ if and only if
$$\sum_j(\zl_j\cdot n_j)\in \hbar\,\Z,$$
for any $n_1,\dots,n_k\in\Z$. Of course, the latter means exactly that all $\zl_j\in\hbar\,\Z$. We get therefore the following.
\begin{theorem}\label{Th}
Let $A$ be a Hermitian $n\ti n$ matrix, $A\in\u^*(n)$, and $\hbar>0$. Then the canonical KKS symplectic form $\zw^\cO$ (cf. (\ref{KKSf})) on the $\U(n)$-coadjoint orbit $\cO$ through $A$ is $\hbar$-integral if and only if the eigenvalues of $\zm$ generate the subgroup $\hbar\,\Z$ in $\R$. In other words, $A$ has the diagonal form
\be\label{A}A=\hbar\cdot\begin{bmatrix}
   \fbox{$\zl_1$}&\hskip-10pt 0 & \cdots & 0\\
 0  &\hskip-10pt\fbox{$\zl_2$}& \cdots & 0\\
   \vdots &  &  \ddots & \vdots\\
  0 & \cdots &\hskip10pt 0  &\fbox{$\zl_k$}
\end{bmatrix}\,,
\ee

\mn for some $\zl_1,\dots,\zl_k\in\Z$, where $\fbox{$\zl_j$}$ is the diagonal matrix of the size $d_j$ and $\zl_j$ on the diagonal. If $A$ is a quantum state, then additionally  $\zl_j\in\N$
and
$$\frac{1}{\hbar}=\sum_j(\zl_j\cdot d_j).$$
\end{theorem}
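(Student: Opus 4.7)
The plan is to apply the preceding theorem on contactifications of coadjoint orbits together with the explicit block-diagonal description of $\U_\zm,\,\u_\zm,\,\U'_\zm,\,\u'_\zm$ carried out immediately before the statement. The first step is to diagonalize $A$: the coadjoint action of $\U(n)$ on $\u^*\cong\{\text{Hermitian matrices}\}$ is by unitary conjugation, and since $\hbar$-integrality of $\zw^\cO$ depends only on the orbit, one may assume $A=\zm$ is already in the block-diagonal normal form (\ref{A}), with pairwise distinct eigenvalues $\zl_j$ on blocks of size $d_j$.

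Next, by the preceding theorem combined with Remark \ref{ul} applied to $\zm$, the KKS form $\zw^\cO$ is $\hbar$-integral precisely when $\ul{\zm}(\zG)=\Z_\hbar=2\pi\hbar\,\Z$, where $\zG\subset\t=\Lie(\T)$ is the kernel of $\exp:\t\to\T$ for the torus $\T=\U_\zm/\U'_\zm\simeq\T^k$ and $\ul{\zm}\in\t^*=\Ann(\u'_\zm)\subset\u_\zm^*$ is just $\zm$ viewed on $\u_\zm/\u'_\zm$. Using the block description of $\U_\zm$ and $\u_\zm$ already recorded above, together with $\det(\exp T)=e^{\tr T}$, the composite homomorphism $\zt\circ\exp:\u_\zm\to\T^k$ reads
$$
(\zt\circ\exp)(T)=\bigl(e^{\tr T_1},\dots,e^{\tr T_k}\bigr),
$$
so $T\in\ker(\zt\circ\exp)$ if and only if $\tr(T_j)\in 2\pi i\,\Z$ for every $j$ (recall that each $T_j$ is anti-Hermitian, hence $\tr T_j\in i\R$). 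Evaluating $\zm$ on such $T$ via the pairing (\ref{npair}) gives $\bk{\zm}{T}=i\tr(\zm T)=i\sum_j\zl_j\tr(T_j)$, which on the kernel reduces to $\bk{\zm}{T}=2\pi\sum_j n_j\zl_j$ as $(n_1,\dots,n_k)$ ranges over $\Z^k$. Hence $\ul{\zm}(\zG)=2\pi\cdot\langle\zl_1,\dots,\zl_k\rangle_\Z$, and this coincides with $2\pi\hbar\,\Z$ exactly when the $\zl_j$ generate the subgroup $\hbar\Z\subset\R$, which is the first claim. For the quantum-state case, the conditions $A\ge 0$ and $\tr A=1$ force $\zl_j\ge 0$ and $\sum_j\zl_j d_j=1$; combined with $\zl_j\in\hbar\Z$ this yields $\zl_j/\hbar\in\N$ and, after dividing by $\hbar$, $1/\hbar=\sum_j(\zl_j/\hbar)\,d_j$.

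I do not expect any serious obstacle here: once the preceding theorem is in hand, the argument is essentially two explicit block-matrix calculations. The one place requiring attention is the normalization of conventions. This subsection redefines the pairing as $\la A,T\ran=i\tr(AT)$, a factor of $2$ different from the version used earlier, so the translation between the abstract integrality criterion $\ul{\zm}(\zG)=\Z_\hbar$ and the arithmetic statement about eigenvalues has to be carried out carefully to avoid spurious factors of $2\pi$ or $2$. With the conventions fixed, the rest is bookkeeping.
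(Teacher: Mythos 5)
Your proposal is correct and follows essentially the same route as the paper: reduce to the block-diagonal normal form of $\zm$, use the explicit description of $\U_\zm$, $\u_\zm$ and the homomorphism $\zt\circ\exp:\u_\zm\to\T^k$ to identify $\ker(\zt\circ\exp)$ with $\{\tr(T_j)\in 2\pi i\,\Z\}$, evaluate the pairing (\ref{npair}) to get $\bk{\zm}{T}=2\pi\sum_j n_j\zl_j$, and read off the integrality condition, with the quantum-state case following from positivity and unit trace. Your explicit remark that $\ul{\zm}(\zG)=2\pi\langle\zl_1,\dots,\zl_k\rangle_{\Z}$ must \emph{equal} (not merely be contained in) $2\pi\hbar\,\Z$ to match the minimality built into $\hbar$-integrality is a point the paper glosses over slightly, and your caution about the rescaled pairing convention is well placed.
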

\no Let us recall that in the above theorem we identify the space $\u^*(n)$ with the real vector space of Hermitian operators on $\C^n$ \emph{via} (\ref{npair}). For the formulation of the following corollary we use Definition \ref{forms}, Theorem \ref{dfr}, and the right trivializations (\ref{triv}),(\ref{triv*}).
\begin{corollary}
Let $A\in\u^*(n)$ be as in (\ref{A}), satisfying the conditions described in Theorem \ref{Th}, and let $N$ be the submanifold in $\sT^*\big(\U(n)\big)$, which in the right trivialization reeds
$$N=\U(n)\ti\{A\}\subset\U(n)\ti\u^*(n)\simeq\sT^*\big(\U(n)\big).$$
Then $N$ is a simple submanifold of $\big(\sT^*\big(\U(n)\big),\zvy\big)$, where $\zvy$ is the Liouville 1-form, and the regular reduction $\zh=\zvy^N_{red}$ of the restriction $\hat\zvy$ of $\zvy$ to $N$ is a contact form
on the reduced manifold $M=N^\zvy_{red}$. This is actually the reduction by the natural action of the compact connected Lie subgroup $\U^{\,0}_A(n)=\big(\U(n)\big)_A^0$ of $\U(n)$ consisting of unitary matrices of the form $\exp(T)$, where $T$ is anti-Hermitian of the form (\ref{gmu}) with
$$\sum_j\big(\zl_j\cdot\tr(T_j)\big)=0,$$
so $M\simeq \U(n)/\U_A^0$.

Moreover, the contact manifold $(M,\zh)$ is $\T_\hbar$-principal, i.e., the flow induced by the Reeb vector field is periodic with the minimal period $2\pi\hbar$ along each orbit and its contact-to-symplectic reduction is canonically isomorphic to the symplectic manifold $(\cO,\zw^\cO)$, the coadjoint orbit of $A\in\u^*(n)$.
\end{corollary}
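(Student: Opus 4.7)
The plan is to deduce this corollary from the contactification machinery developed earlier (Theorems \ref{main0} and \ref{main}, together with Theorem \ref{Th}), using the explicit matrix description of the stabilizer $\U_A(n)$ available for unitary groups. The only real work is to verify the integrality hypothesis out of the eigenvalue condition on $A$ in (\ref{A}); afterwards everything is a direct readout of the general results.

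First, specialize the general setup to $G=\U(n)$ and $\zm=A$. Since $\U(n)$ is compact and connected, the stabilizer $G_A=\U_A(n)$ is connected and consists of block-diagonal unitaries (\ref{Gmu}); its Lie algebra $\u_A$ is described by (\ref{gmu}). Using the pairing (\ref{npair}) and the diagonal form (\ref{A}), a direct computation gives $\la A,T\ran=i\hbar\sum_j\zl_j\tr(T_j)$, so the ideal $\g^{\,0}_A=\u_A\cap\ker(A)$ is exactly the subspace of block-diagonal anti-Hermitians with $\sum_j\zl_j\tr(T_j)=0$, matching the description in the statement. Once $\U^{\,0}_A(n)$ is known to be closed in the compact group $\U_A(n)$, surjectivity of the exponential on compact connected Lie groups identifies it with $\exp(\g^{\,0}_A)$, as asserted.

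Next, verify the integrality hypothesis of Theorem \ref{main0} via Remark \ref{ul}: one needs $\ul A$ to take $\Z_\hbar$-values on the kernel $\zG$ of $\exp:\t\to\T$, where $\t=\u_A/\u'_A$ is the Lie algebra of the torus $\T=\U_A/\U'_A\simeq(S^1)^k$. Using the isomorphism $\zr$ from (\ref{Z}) and the computation $(\zt\circ\exp)(T)=\big(e^{\tr(T_1)},\dots,e^{\tr(T_k)}\big)$, the lattice $\zG$ is generated by $k$ standard elements on which $\ul A$ takes values $2\pi\hbar\zl_j$. These generate $\Z_\hbar=2\pi\hbar\Z$ precisely when the integers $\{\zl_j\}$ generate $\Z$, which is the assumption of Theorem \ref{Th} that the eigenvalues $\hbar\zl_j$ of $A$ generate $\hbar\Z$. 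By Theorem \ref{main}, $\U^{\,0}_A(n)$ is then closed in $\U_A(n)$, and Theorem \ref{main0} delivers all the qualitative assertions of the corollary: $N=\U(n)\ti\{A\}$ is a simple submanifold of $(\sT^*\U(n),\zvy)$, the reduction $\zh$ is a contact form on $M\simeq\U(n)/\U^{\,0}_A(n)$, and $(M,\zh)$ is a principal contactification of the KKS orbit $(\cO,\zw^\cO)$ (the quotient $\cG=\U_A/\U^{\,0}_A$ is connected since $\U_A$ is).

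Finally, compute the Reeb period by exhibiting the character $\zF_A:\U_A\to S^1$ with kernel $\U^{\,0}_A$, namely $\zF_A(\exp v)=e^{i\la A,v\ran/\hbar}$, which is well-defined precisely by the integrality just verified. The Reeb direction in $\g_A/\g^{\,0}_A$ is generated by any $\sv\in\u_A$ normalized by $\la A,\sv\ran=1$; then $\zF_A(\exp(t\sv))=e^{it/\hbar}$ traverses $S^1$ with minimal period $2\pi\hbar$, which identifies $(M,\zh)$ as a $\T_\hbar$-principal contact manifold, and the contact-to-symplectic reduction coincides with the Marsden-Weinstein-Meyer reduction producing $(\cO,\zw^\cO)$. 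The main obstacle is the integrality translation in the middle step: one must carefully track the Hermitian/anti-Hermitian identification together with the various $i$-factors (in (\ref{npair}) and in the exponential map on $\T^k$) so that the hypothesis ``eigenvalues generate $\hbar\Z$'' of Theorem \ref{Th} becomes precisely the hypothesis $\ul A(\zG)=\Z_\hbar$ of Theorem \ref{main}, with the same $\hbar$ controlling the Reeb period.
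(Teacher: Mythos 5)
Your proposal is correct and follows essentially the same route the paper intends: it specializes Theorems \ref{main0} and \ref{main} to $G=\U(n)$, uses the block-diagonal description (\ref{Gmu})--(\ref{gmu}) of $\U_A(n)$ and the computation $(\zt\circ\exp)(T)=\big(e^{\tr(T_1)},\dots,e^{\tr(T_k)}\big)$ to translate the eigenvalue condition of Theorem \ref{Th} into the $\Z_\hbar$-integrality hypothesis, and then reads off the $\T_\hbar$-principal structure from the character argument and the normalization $\bk{A}{\sv}=1$ of the Reeb generator, exactly as in the paper's discussion following Theorem \ref{main}. No gaps.
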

\no The above corollary can be applied to constructing canonical contactifications of coadjoint orbits of quantum states. It is clear that $A\in\us(n)$ satisfies the conditions of Theorem \ref{Th} for some $\hbar>0$ if and only if the subgroup
\be\label{cond} \exp_A=\big\{e^{tiA}\,\big|\, t\in\R\big\}\ee
of $\U(n)$ is compact (thus is a circle).
\begin{corollary}
If $\zr$ is a quantum state (density matrix) in $\C^n$ such that $\exp_\zr$ is compact, then the coadjoint orbit of $\U(n)$ through $\zr$ admits a canonical $\U(1)$-principal (thus compact) contactification.
\end{corollary}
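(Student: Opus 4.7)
The plan is to reduce the corollary directly to Theorem~\ref{Th} and the corollary immediately preceding it, by recognizing that the compactness of $\exp_\zr$ is exactly the spectral integrality condition appearing there.

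First, I would diagonalize $\zr$: being Hermitian it has real eigenvalues $\zl_1,\dots,\zl_k$ (pairwise distinct) with multiplicities $d_1,\dots,d_k$ summing to $n$. The subset $\exp_\zr=\{e^{ti\zr}:t\in\R\}$ is the image of the continuous homomorphism $\R\to \U(n)$, $t\mapsto e^{ti\zr}$, whose closure is a torus in $\U(n)$. Thus $\exp_\zr$ is compact if and only if it is closed if and only if there exists $T>0$ with $e^{Ti\zr}=I$, which, after diagonalization, is equivalent to $T\zl_j\in 2\pi\Z$ for every $j$. Taking $T$ to be the least such positive period and setting $\hbar=2\pi/T$, the eigenvalues $\zl_j$ all lie in $\hbar\Z$ and, by minimality of $T$, the subgroup of $\R$ they generate equals $\hbar\Z$ (one checks easily that a proper subgroup $\hbar'\Z\supsetneq\hbar\Z$ containing all $\zl_j$ would correspond to a period strictly smaller than $T$).

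Second, this is precisely the hypothesis of Theorem~\ref{Th}, so the corollary immediately preceding the statement being proved applies: the reduction of the restriction $\hat\zvy=\zvy\big|_N$ of the Liouville 1-form on $\sT^*\U(n)$ to the submanifold $N=\U(n)\ti\{\zr\}\subset\U(n)\ti\u^*(n)$ by the closed normal subgroup $\U^0_\zr$ produces a $\T_\hbar$-principal contact manifold $(M,\zh)$ with $M\simeq\U(n)/\U^0_\zr$, whose contact-to-symplectic reduction is canonically identified with the coadjoint orbit $(\cO,\zw^\cO)$ through $\zr$.

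Finally, the one-dimensional compact connected Lie group $\T_\hbar=\R/(2\pi\hbar\Z)$ is isomorphic to $\U(1)=\R/(2\pi\Z)$ via a rescaling, so $(M,\zh)$ is $\U(1)$-principal over $\cO$; compactness of $M$ is then automatic as both fibre $\U(1)$ and base $\cO$ (a coadjoint orbit of the compact group $\U(n)$) are compact. I do not foresee any serious obstacle: the content of the argument is the elementary translation between the Lie-theoretic condition ``$\exp_\zr$ is compact'' and the spectral condition ``all $\zl_j$ lie in $\hbar\Z$ for some $\hbar>0$''. The one subtlety worth flagging is that $\T_\hbar$ and $\U(1)$ yield isomorphic principal contact \emph{manifolds} but non-isomorphic contact \emph{forms}, differing by a rescaling of the Reeb period — a point already made explicitly in the discussion preceding the statement, so the wording ``$\U(1)$-principal contactification'' refers to the underlying principal-bundle structure of the canonical $\T_\hbar$-contactification furnished by the preceding corollary.
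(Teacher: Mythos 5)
Your proof is correct and follows essentially the same route the paper intends: the paper leaves this corollary without an explicit proof, having just remarked that compactness of $\exp_A$ is equivalent to the spectral condition of Theorem~\ref{Th}, after which the immediately preceding corollary applies verbatim, and your write-up simply spells out that equivalence (periodicity of $t\mapsto e^{ti\zr}$ $\Leftrightarrow$ all eigenvalues lie in $\hbar\Z$ for $\hbar=2\pi/T$). The only blemish is a trivial slip of direction in your parenthetical: if the eigenvalues generated only $m\hbar\Z$ with $m\ge 2$, the relevant subgroup is $\hbar'\Z\subsetneq\hbar\Z$ (with $\hbar'=m\hbar$), yielding the shorter period $T/m$ and contradicting minimality.
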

\begin{example} Consider the group $\U(3)$ and $\zr\in\u^*(3)$,
$$\zr=\frac{1}{6}\begin{bmatrix}
   1& 0 &  0\\
 0  & 2 & 0\\
  0  & 0  &3
\end{bmatrix}\,.
$$
Clearly, $A=\zr$ satisfies condition (\ref{cond}). The group $\U_\zr(3)$ consists of matrices
\be\label{u3}U=\begin{bmatrix}
   e^{i\zt_1}& 0 &  0\\
 0  & e^{i\zt_2} & 0\\
  0  & 0  & e^{i\zt_3}
\end{bmatrix}\,,
\ee
thus it is a maximal torus $K$ in $\U(3)$, and $\U^{\,0}_\zr(3)$ is a 1-codimensional
closed subgroup $K^0$ in this torus consisting of matrices (\ref{u3}) for which
\be\label{eq1}\zt_1+2\zt_2+3\zt_3=0.\ee
Hence, the coadjoint orbit $\cO$ through $\zr$ is $\U(3)/K$ and its compactification $M$ is $\U(3)/K^0$.
Of course, we can reduce ourselves to $\SU(3)$ and $\hat K=K\cap\SU(3)$ which consists of matrices (\ref{u3})
for which
\be\label{eq2}
\zt_1+\zt_2+\zt_3=0.\ee
Consequently, $\hat K^0=K^0\cap\SU(3)$ consists of matrices (\ref{u3}) satisfying both equations (\ref{eq1}) and (\ref{eq2}). We have a 1-parameter solution
$$\zt_1=a,\ \zt_2=-a,\ \zt_3=a,$$
So $\hat K^0$ is $S^1$. Hence, $M=\SU(3)/S^1$ and
$$\cO=\SU(3)/(S^1\ti S^1)=M/S^1.$$
Generally, all generic coadjoint orbits of $\U(n)$ (equivalently, $\SU(n)$) are diffeomorphic to $\U(n)/K$,
where $K$ is a maximal torus in $\U(n)$ (equivalently, $\SU(n)/\hat K$), and are fiber bundles
over the complex projective space $\C\P^{n-1}$ with fibers being generic orbits of $\U(n-1)$ (see the nice survey
\cite{Bernatska:2008}). Of course, these orbits are generally not isomorphic as symplectic manifolds. In our case $\cO$ is a (nontrivial) fiber bundle over $\C\P^{2}$ with the fiber $\C\P^1$.
\end{example}

\section{Lagrangians for compact Hamiltonian systems}\label{B}
There is considerable interest in studying Hamiltonian systems on symplectic manifolds $(N,\zw)$ which are more general than the standard phase spaces $N=\sT^*Q$, or even Poisson manifolds. Hamiltonian vector fields are defined for arbitrary Hamiltonian $H:N\to M$ without problems. What causes problems is the Lagrangian picture for such systems, since a globally defined action functional is difficult or impossible to find, even in the case when the physically identified configuration space $Q$ can be defined for $N$. The reason why one can do no better than get local Lagrangian descriptions is that one is unable to find well-defined canonical coordinates of the usual kind
on the entire phase space. This in turn is bound to happen when, in the language of differential geometry, the symplectic structure is given by a closed but non-exact two-form. It has been found \cite{Balachandran:1992,Zaccaria:1983} that in this situation, if the symplectic form obeys certain prequantization conditions, then classically a global Lagrangian formulation can be achieved on a suitable enlarged configuration space. This approach applies also to quantum mechanics.
In the context of our work, these are contactifications which can serve as enlarged phase spaces for some symplectic manifolds, even in the compact case, where symplectic forms are never exact.
\begin{example}[Hopf fibration]\label{Hf}
In the context of a magnetic monopole, one considers $(\R^3)^\ti=\R^3\setminus\{0\}$ with the closed two-form
$$B=x\cdot\xd y\we\xd z + y\cdot\xd z\we\xd x + z\cdot\xd x\we\xd y,$$
representing the magnetic field of a magnetic monopole centered at the origin. This is the standard volume form $\vol_{S^2}$ on the unit sphere $S^2\subset(\R^3)^\ti$, so
$$\int_{S^2}B=4\pi,$$
thus $B$ is not exact. In the spherical coordinates,
$$x=\sin(\zy)\cos(\zf),\quad y=\sin(\zy)\sin(\zf),\quad z=\cos(\zy),$$
where $\zy\in[0,\zp]$, $\zf\in[0,2\pi]$, we have
$$\zw_{S^2}=\sin(\zy)\xd\zy\we\xd\zf.$$
The KKS symplectic form $\zw^{S^2}$ on $S^2=\C\P^1$ regarded as a coadjoint orbit of $\U(2)$ a contactification $S^3\subset\C^2$, with the contact form $\zvy_0$ being the restriction of the Liouville 1-form
$$\zvy=\half\sum_{k=1}^2\big(q^k\xd p_k-p_k\xd q^k\big)$$
on $\C^2$ to $S^3$, where $z_k=q^k+i\cdot p_k$, $k=1,2$. Moreover, $S^3$ is canonically an $\U(1)$-principal bundle over $S^2$ with the projection $\zt:S^3\to S^3/\U(1)=S^2$. In fact, $\zt:S^3\to S^2$ is an example of a nontrivial principal bundle with the typical fiber $S^1$, called the \emph{Hopf fibration}. The pullback $\zt^*(\zw^{S^2})$ is exact and $\zt^*(\zw^{S^2})=\xd\zvy_0$.

\mn To describe the projection $\zp:S^3\to S^2$ explicitly, consider the map
$$p:\C^2\to\C\ti\R, \quad p(\za,\zn)=\big(\za\bar\zn,|\za|^2-|\zn|^2\big),
$$
where $\C\ni\zn\mapsto\bar\zn\in\C$ is the complex conjugation. Interpreting $\C$ as $\R^2$, we can think that $p:\R^4\to\R^3$. Note that if $|\za|^2+|\zn|^2=1$, then
$$|\za\bar\zn|^2+\big(|\za|^2-|\zn|^2\big)^2=1,$$
so $p$ maps the unit sphere $S^3$ in $C^2=\R^4$ into the unit sphere $S^2$ in $\C\ti\R=\R^3$. Moreover, $\zt=p\,\big|_{S^3}:S^3\to S^2$ is surjective and maps $(\za,\zn)\in S^3$ and $(\za',\zn')\in S^3$ to the same point if and only if $(\za',\zn')=(\zl\za,\zl\zn)$ for some $\zl\in\C$, $|\zl|=1$. Consequently, fibers of this projections are circles $S^1\simeq\U(1)=\{ z\in\C\,:\, |z|=1\}$, being the orbits of the $\U(1)$-action. One can also interpret $S^3\subset\C^2$, consisting of $(\za,\zn)\in\C^2$ such that $|\za|^2+|\zn|^2=1$, as the group $\SU(2)$ \emph{via} the identification
$$S^3\ni(\za,\zn)\rightarrowtail\begin{bmatrix}
\za & -\bar\zn\\
\zn & \bar\za
\end{bmatrix}\in\SU(2)\,.
$$
In this identification, $S^2$ is the quotient $\SU(2)/\U(1)$, where $\U(1)$ is the subgroup of diagonal elements,
$$\begin{bmatrix}
\za & 0\\
0 & \bar\za
\end{bmatrix}\,,
$$
$|\za|^2=1$. One can also use the direct parametrization of the three-sphere in $\C^2$ by the \emph{Euler angles},
\beas && z_1=e^{-\frac{i}{2}\big(\zc+\zf\big)}\cos(\zy/2),\\
&& z_2=e^{-\frac{i}{2}\big(\zc-\zf\big)}\sin(\zy/2)\,,
\eeas
where $(\zvy,\zf)$ are spherical coordinates on $S^2$ and $\zc\in[0,4\pi)$. Hence, the two form $\zw_0=\xd\zvy_0$ on $S^3$, being the restriction of
$$\zw=-\frac{i}{2}\Big(\xd \bar z_1\we\xd z_1+\xd\bar z_2\we\xd z_2\Big)$$
to $S^3$, projects onto
\beas && -\frac{i}{2}\Big(\xd\big(e^{\frac{i}{2}\zf}\cos(\zy/2)\big)\we\xd\big(e^{-\frac{i}{2}\zf}\cos(\zy/2)\big)\Big)\\
&&-i\Big(\xd\big(e^{-\frac{i}{2}\zf}\sin(\zy/2)\big)\we\xd\big(e^{\frac{i}{2}\zf}\sin(\zy/2)\big)\Big)\\
&& = \frac{1}{2}\sin(\zy/2)\cos(\zy/2)\xd\zy\we\xd\zf=\frac{1}{4}\vol_{S^2}.
\eeas
\end{example}

\mn Let us go now to the Hamiltonian mechanics on symplectic manifolds admitting a contactification, like the two dimensional sphere above. Suppose that $(M,\zh)$ is a contactification of a symplectic manifold $(N,\zw)$ and $\zt:M\to N=M/\cF$ is the corresponding surjective submersion, where $\cF$ is the simple foliation by trajectories of the Reeb vector field $\cR$. Let $H:N\to\R$ be a Hamiltonian function and $X_H$ the corresponding Hamiltonian vector field, $i_{X_H}\zw=-\xd H$. For paths $\zg:[t_0,t_1]\to M$ we define the action functional
\be\label{actionf}\zF[\zg]=\int_{t_0}^{t_1}\hat H\big(\zg(t)\big)\xd t-\int_\zg\zh=\int_{t_0}^{t_1} \Big(\hat H\big(\zg(t)\big)-\Bk{\zh\big(\zg(t)\big)}{\dot\zg(t)}\Big)\xd t\,,
\ee
where $\hat H=H\circ\zt$ is the pullback of $H$. For a variation $\zg+\ze\xd\zg$ such that $\xd\zg(t_0)=\xd\zg(t_1)=0$, we have
\bea&&\frac{\xd}{\xd \ze}\,\Big|_{\ze=0}\zF[\zg+\ze\zd\zg]\label{part0}\\
&&=\int_{t_0}^{t_1} \Bigg(\frac{\xd}{\xd \ze}\,\Big|_{\ze=0}\Big(\hat H\big(\zg(t)+\ze\zd\zg(t)\big)-\Bk{\zh\big(\zg(t)+\ze\zd\zg(t)\big)}{\dot\zg(t)
+\ze(\zd\zg)^{\centerdot}(t)}\Big)\Bigg)\xd t\nn\\
&&=\int_{t_0}^{t_1}\Big(\Bk{\xd\hat H\big(\zg(t)\big)}{\zd\zg(t)}+
\xd\zh\big(\zg(t)\big)\big(\dot\zg(t),\zd\zg(t)\big)\Big)\xd t\,.\nn
\eea
To see the latter equality, let us write in local coordinates $(y^i)$ on $M$,
$$\frac{\xd}{\xd\ze}\,\Big|_{\ze=0}\Big(\Bk{\zh\big(\zg+\ze\zd\zg\big)}
{\dot\zg+\ze(\zd\zg)^{\centerdot}}\Big)
=\frac{\pa\zh_i}{\pa y^j}\big(\zg\big)(\zd\zg)^j\dot\zg^i
+\zh_i\big(\zg\big)\frac{\xd(\zd\zg)^i}{\xd t}.
$$
But
$$\int_{t_0}^{t_1}\Big(\zh_i\big(\zg\big)\frac{\xd(\zd\zg)^i}{\xd t}\Big)\xd t=
\Big(\zh_i\big(\zg\big)(\zd\zg)^i\Big)\,\Big|_{t_0}^{t_1}-
\int_{t_0}^{t_1}\Big(\frac{\pa\zh_i}{\pa y^j}\big(\zg\big)\dot\zg^j(\zd\zg)^i\Big).
$$
The first summand on the right side is zero, so
\beas&&\int_{t_0}^{t_1} \Bigg(\frac{\xd}{\xd\ze}\,\Big|_{\ze=0}\Big(\Bk{\zh\big(\zg+\ze\zd\zg\big)}
{\dot\zg+\ze(\zd\zg)^{\centerdot}}\Big)\Bigg)\xd t
=\int_{t_0}^{t_1}\Bigg(\Big(\frac{\pa\zh_i}{\pa y^j}-\frac{\pa\zh_i}{\pa y^j}\Big)\big(\zg\big)(\zd\zg)^j\dot\zg^i\Bigg)\xd t\\
&&=\int_{t_0}^{t_1}\Big(\xd\zh\big(\zg(t)\big)\big(\zd\zg(t),\dot\zg(t)\big)\Big)\xd t.
\eeas
Since $\zd\zg$ in (\ref{part0}) is arbitrary, we get from (\ref{part0}) the Euler-Lagrange equation
\be\label{ELe}
i_{\dot\zg}\,\xd\zh+\xd\hat H=0.
\ee
The above equation does not determine a solution uniquely, even if an initial condition is fixed. This is because $\xd\zh$ has a kernel, whose characteristic foliation consist of trajectories of the Reeb vector field. However, the equation projected on $N$ is exactly the Hamilton equation with the Hamiltonian $H$. In other words, projection of solutions of (\ref{ELe}) are trajectories of the Hamiltonian vector field $X_H$. To sum up, we formulate the following.
\begin{theorem}
Let $(M,\zh)$ be a contactification of a symplectic manifold $(N,\zw)$ and $\zt:M\to N$ be the corresponding fibration. For every Hamiltonian $H:N\to\R$,  denote with $\hat H$ its pullback, $\hat H=H\circ\zt$. Then every stationary point $\zg:(t_0,t_1)\to M$ of the action functional (\ref{actionf}) projects \emph{via} $\zt$ onto a trajectory of the Hamiltonian vector field $X_H$ on $N$.
\end{theorem}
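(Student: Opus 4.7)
The plan is to pick up precisely where the derivation of the Euler-Lagrange equation has just left off: the text has already established that a stationary curve $\zg$ satisfies the pointwise relation $\si_{\dot\zg}\,\xd\zh = -\xd\hat H$, so what remains is to show that this equation descends through the submersion $\zt:M\to N$ to the Hamilton equation $\si_{X_H}\zw = -\xd H$ on $N$. The two structural ingredients I will exploit are the defining property of a contactification, $\zt^*\zw = \xd\zh$, and the naturality relation $\hat H = H\circ\zt$.

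Concretely, I would set $\zs := \zt\circ\zg:(t_0,t_1)\to N$ and verify at each $t$ that $\si_{\dot\zs(t)}\zw = -\xd H_{\zs(t)}$. Since $\zt$ is a surjective submersion, every test vector $w\in\sT_{\zs(t)}N$ admits a lift $v\in\sT_{\zg(t)}M$ with $\sT\zt(v)=w$. Combining $\dot\zs(t)=\sT\zt(\dot\zg(t))$ with the pullback identity gives
\[
\zw_{\zs(t)}(\dot\zs(t),w) \,=\, (\zt^*\zw)_{\zg(t)}(\dot\zg(t),v) \,=\, \xd\zh_{\zg(t)}(\dot\zg(t),v),
\]
which by the Euler-Lagrange equation equals $-\xd\hat H(v) = -\xd H(\sT\zt(v)) = -\xd H(w)$. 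As $w$ was arbitrary, $\si_{\dot\zs}\zw = -\xd H$ along $\zs$, so $\dot\zs = X_H\circ\zs$ and $\zs$ is a trajectory of the Hamiltonian vector field $X_H$, as required.

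There is no substantial obstacle here; the real labour was carried out in deriving the Euler-Lagrange equation. The only fine point worth checking in the writeup is that the computation above is independent of the choice of lift $v$: if $v'=v+c\cR$ is another lift of $w$, then $\xd\zh(\dot\zg,\cR)=-(\si_\cR\xd\zh)(\dot\zg)=0$ by the defining equation of the Reeb field, and $\xd\hat H(\cR)=\xd H(\sT\zt(\cR))=0$ because $\cR$ is vertical for $\zt$. Thus both sides of the Euler-Lagrange equation are horizontal 1-forms along $\zg$ and the passage to the quotient $N$ is unambiguous. It is perhaps worth emphasising that this is consistent with the fact, already noted in the text, that the upstairs equation does not determine $\zg$ uniquely — the Reeb direction stays free — but only its projection downstairs.
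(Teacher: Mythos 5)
Your argument is correct and follows the same route as the paper: the paper derives the Euler--Lagrange equation $\si_{\dot\zg}\,\xd\zh+\xd\hat H=0$ variationally and then simply asserts that its projection to $N$ is the Hamilton equation, whereas you supply the (routine but welcome) details of that last step via $\zt^*\zw=\xd\zh$, $\hat H=H\circ\zt$, and the surjectivity of $\sT\zt$. The check that the computation is independent of the lift is a nice touch, consistent with the paper's remark that solutions upstairs are only determined up to the Reeb direction.
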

\begin{example}
Let $H(x,y,z)=(z+1)/4$ be a Hamiltonian on $S^2$. In the notation of Example \ref{Hf} and Theorem \ref{cpn}, the pullback $\hat H=H\circ \zt$ on $S^3$ reads
$$\hat H(z_1,z_2)=\big(|z_1|^2-|z_2|^2+1\big)/4=|z_1|^2/2=\frac{(q^1)^2+(p_1)^2}{2}.$$
The stationary points $\zg:[t_0,t_1]\to S^3$ of the action functional
\beas &&\zF[\zg]=\int_{t_0}^{t_1}\hat H\big(\zg(t)\big)\xd t-\int_\zg\zh
=\int_{t_0}^{t_1}\frac{\big(q_1(t)\big)^2+\big(p_1(t)\big)^2}{2}\xd t\\
&&-\frac{1}{2}\int_{t_0}^{t_1}\Big(q^1(t)\dot p_1(t)-\dot p_1(t)\dot q^1(t)+
q^2(t)\dot p_2(t)-\dot p_2(t)\dot q^2(t)\Big)\xd t
\eeas
satisfy the equation (\ref{ELe}), i.e., there is a function $a:[t_0,t_1]\to\R$ such that
\beas && \Bk{\dot q^1(t)\pa_{q^1}+\dot p_1(t)\pa_{p_1}+\dot q^2(t)\pa_{q^2}+\dot p_2(t)\pa_{p_2}}
{\big(\xd q^1\we\xd p_1+\xd q^2\we\xd p_2\big)_0}\\
&&+q^1(t)\xd q^1+p_1(t)\xd p_1+\frac{a(t)}{2}\xd \big((q^1)^2+(p_1)^2+(q^2)^2+(p_2)^2\big)=0\,,
\eeas
where $\big(\xd q^1\we\xd p_1+\xd q^2\we\xd p_2\big)_0$ is the restriction of the 2-form $\zw=\xd q^1\we\xd p_1+\xd q^2\we\xd p_2$ on $\C^2$ to $S^3$.
It follows that there is a function $a:[t_0,t_1]\to\R$
$$\dot q^2(t)=-a(t)p_2,\quad \dot p_2(t)=a(t)q^2,\quad \dot p_1(t)=\big(1+a(t)\big)q^1(t),\quad \dot q^1(t)=-\big(1+a(t)\big)p_1(t).$$
Hence,
$$\zg(t)=\Big(e^{-i\big(t+A(t)\big)}z_1,e^{iA(t)}z_2\Big),$$
where $\dot A(t)=a(t)$. The projected curve $\ul\zg(t)=\zt\big(\zg(t)\big)$ in $\R^3$ reads
$$\ul\zg(t)=\Big(\cos(t)x_0+\sin(t)y_0,\cos(t)y_0-\sin(t)x_0,z_0\Big).$$
These are exactly the trajectories of the Hamiltonian vector field $X_H$ on $S^2$ with respect to the symplectic form $\frac{1}{4}\vol_{S^2}$.  These are the trajectories of the Hamiltonian vector field $X_{H_0}=y\pa_x-x\pa_y$ for $H_0=4H=z+1$ with respect to $\vol_{S^2}=\zw^{S^2}/4$. Indeed,
$$i_{X_{H_0}}B=i_{y\pa_x-x\pa_y}\big(x\cdot\xd y\we\xd z + y\cdot\xd z\we\xd x + z\cdot\xd x\we\xd y\big)=
-(x^2+y^2+z^2)\xd z+z(x\xd x+y\xd y+z\xd z).$$
But $x\xd x+y\xd y+z\xd z$ vanishes on $S^2$, so
$$i_{X_{H_0}}\vol_{S^2}=-\xd z.$$
\end{example}
\section{Conclusions and outlook}
We have studied some questions related to the concept of a contactification of a symplectic manifold, together with a geometric construction of contactifications of coadjoint orbits of Lie groups. This construction is based on methods of the Marsden-Weinstein-Meyer symplectic reduction. However, there are obstacles to carrying out such a construction, having a clear topological interpretation. These obstructions are equivalent to the celebrated Dirac quantization conditions in the case of coadjoint orbits of compact groups.

We have also shown that contactifications provide a nice geometrical tool for a Lagrangian description of Hamiltonian systems, even if the symplectic form is not exact (does not possess a `vector potential'). A nice example related to the magnetic monopole, and geometrically to the Hopf fibration, is provided.

\mn We should admit, however, that one fundamental problem remains open, namely the question of existence of a (connected) contactification of compact symplectic manifolds. The point is that contactifications can be \emph{a priori} awkward, weird, and topologically very complicated, as every open submanifold of a contact manifold is contact. Thus, our challenge for the future work is to prove the following conjecture.
\begin{conjecture}
A compact symplectic manifold $(N,\zw)$ admits a connected contactification if and only if it satisfies the Dirac quantization condition
$$\big[\zw/2\pi\hbar\big]\in\sH^2(N,\Z)$$
for some $\hbar>0$.
\end{conjecture}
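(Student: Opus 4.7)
The plan is to prove the two implications separately. The reverse direction (integrality implies existence) is a direct application of the Weil prequantization theorem, while the forward direction (existence implies integrality) reduces, after a topological analysis of the fibration, to the theorem of \cite{Grabowska:2023} cited above.

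For the $(\Leftarrow)$ direction, assume $[\zw/2\pi\hbar]\in\sH^2(N,\Z)$ for some $\hbar>0$. The Weil prequantization theorem furnishes a principal $\U(1)$-bundle $\zp\colon M\to N$ equipped with a connection 1-form $\zh$ satisfying $\xd\zh=\zp^*(\zw)$ and with the $\zh$-integral of each fiber equal to $2\pi\hbar$. Since $\U(1)$ and $N$ are both compact and connected, so is $M$. The top-degree form $\zh\we(\xd\zh)^n=\zh\we\zp^*(\zw^n)$ is nowhere vanishing, because $\zw^n$ is a volume form on $N$ and $\zh$ evaluates to $1$ on the Reeb (vertical) direction. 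Hence $\zh$ is a contact form and $(M,\zh)$ is a connected compact contactification of $(N,\zw)$.

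For the $(\Rightarrow)$ direction, let $(M,\zh)$ be a connected contactification of compact $(N,\zw)$ with fibration $\zp\colon M\to N$. Each fiber is a connected leaf of the simple $1$-dimensional Reeb foliation, hence diffeomorphic to either $S^1$ or $\R$. Let $C\subseteq N$ be the set of points with $S^1$-fiber, and define the period function $\wt T\colon C\to(0,\infty)$ by $\wt T(p)=\int_{\zp^{-1}(p)}\zh=\zp_*(\zh)(p)$. Ehresmann's theorem applied near a compact fiber shows $C$ is open in $N$. On $C$, commutativity of fiber integration with $\xd$ (no boundary terms for closed fibers) gives
$$\xd\wt T=\zp_*(\xd\zh)=\zp_*(\zp^*\zw)=0,$$
so $\wt T$ is locally constant on $C$. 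If $C=N$, then by connectedness of $N$, $\wt T\equiv 2\pi\hbar$ is a positive constant, all Reeb orbits are closed of period $2\pi\hbar$, the Reeb flow is automatically complete, and the theorem of \cite{Grabowska:2023} equips $\zp\colon M\to N$ with the structure of a principal $\U(1)$-bundle having $\zh$ as a connection form. The Chern-Weil theorem then identifies $[\zw/2\pi\hbar]$ with the first Chern class of this bundle, yielding the desired integrality.

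The main obstacle is to exclude the case $C\subsetneq N$. The case $C=\emptyset$ (all fibers are lines) should, provided the Reeb flow is complete, yield by the theorem of \cite{Grabowska:2023} an $\R$-principal bundle structure and hence $\zw$ exact, contradicting $\int_N\zw^n>0$ on compact $N$. The intermediate case $\emptyset\ne C\subsetneq N$ is more delicate: at a boundary point $p\in(\partial C)\cap N$ and $x\in\zp^{-1}(p)$, one would like to deduce $\zf_{2\pi\hbar}(x)=x$ from continuity of the flow applied to nearby points $x_n\in\zp^{-1}(C)$ satisfying $\zf_{2\pi\hbar}(x_n)=x_n$, but this requires the Reeb flow to be defined at $x$ up to time $2\pi\hbar$. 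Both difficulties thus reduce to establishing completeness of the Reeb field on a contactification of a compact symplectic base, which is not automatic since $M$ itself may be noncompact. One natural attempt is to rescale $\zh$ to $f\zh$ for a positive function $f$ chosen to force completeness, but this alters $\xd\zh$ and hence the projected 2-form on $N$; one would need to show that $f$ can be chosen to preserve the de Rham class of $\zw$, which is a cohomological rigidity problem. An alternative is to reconstruct a candidate principal $\U(1)$-bundle directly via obstruction theory on $N$ and then argue that the given contactification must be isomorphic to it. Neither route is routine, and we expect that new input beyond the techniques developed in the present paper will be required to complete the proof.
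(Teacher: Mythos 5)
This statement is presented in the paper as an open \emph{conjecture} -- the authors explicitly say the problem ``remains open'' -- so there is no proof in the paper to compare against, and your proposal does not close the gap either: what you have written is a correct proof of one implication plus an accurate diagnosis of why the other implication is hard, which you yourself acknowledge in the final paragraph. To be precise about what stands: your $(\Leftarrow)$ direction is correct and is the standard prequantization/Boothby--Wang construction (Weil's theorem gives the $\U(1)$-bundle with connection $\zh$, and $\zh\we(\xd\zh)^n=\zh\we\zp^*(\zw^n)$ is nowhere zero because $\zh$ is nonzero on the vertical direction while $\zp^*(\zw^n)$ is nonzero on horizontal complements); this produces a compact connected contactification. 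Your $(\Rightarrow)$ direction is correct \emph{under the additional hypothesis that the Reeb field is complete} (equivalently, whenever all fibers are circles, or the contactification is compact): then the cited theorem of \cite{Grabowska:2023} makes $M\to N$ a principal $\U(1)$- or $\R$-bundle, the $\R$-case is excluded by $\int_N\zw^n>0$, and Chern--Weil gives integrality. Your observations that the locus $C$ of circle fibers is open (Ehresmann near a compact fiber) and that the period $\zp_*(\zh)$ is locally constant on $C$ are both sound.

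The genuine gap is exactly where you place it: a connected contactification of a compact $(N,\zw)$ need not be compact and its Reeb field need not be complete -- the paper itself points out that deleting a point from a contactification yields another contactification -- so the fibers may be a mixture of circles and lines, $C$ may be a proper open subset (possibly disconnected, with \emph{a priori} different periods on different components), and neither of your two proposed repairs (conformal rescaling $\zh\mapsto f\zh$, which changes the projected $2$-form, or reconstructing a principal bundle by obstruction theory) is carried out. Since the conjecture quantifies over \emph{all} connected contactifications, one cannot simply pass to a better-behaved one without first proving it exists, which is circular. So the proposal should be read as a partial result (the conjecture restricted to complete contactifications) together with a correct identification of the open core, not as a proof of the statement.
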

\no Of course, applications of the described `lagrangianization' to particular, physically interesting Hamiltonian systems is another important task for the forthcoming work.

\vskip.5cm
\noindent Katarzyna Grabowska\\\emph{Faculty of Physics,
University of Warsaw,}\\
{\small ul. Pasteura 5, 02-093 Warszawa, Poland} \\{\tt konieczn@fuw.edu.pl}\\
https://orcid.org/0000-0003-2805-1849\\

\noindent Janusz Grabowski\\\emph{Institute of Mathematics, Polish Academy of Sciences}\\{\small ul. \'Sniadeckich 8, 00-656 Warszawa,
Poland}\\{\tt jagrab@impan.pl}\\  https://orcid.org/0000-0001-8715-2370
\\

\noindent Marek Ku\'s\\
\emph{Center for Theoretical Physics, Polish Academy of Sciences,} \\
{\small Aleja Lotnik{\'o}w 32/46, 02-668 Warszawa,
Poland} \\{\tt marek.kus@cft.edu.pl}
\\

\noindent Giuseppe Marmo\\
\emph{Dipartimento di Fisica ``Ettore Pancini'', Universit\`{a} ``Federico II'' di Napoli} \\
\emph{and Istituto Nazionale di Fisica Nucleare, Sezione di Napoli,} \\
{\small Complesso Universitario di Monte Sant Angelo,} \\
{\small Via Cintia, I-80126 Napoli, Italy} \\
{\tt marmo@na.infn.it}\\
https://orcid.org/0000-0003-2662-2193
\\

\end{document}